\newtheorem{theorem}{Theorem}[section]
\newtheorem{lemma}[theorem]{Lemma}
\newtheorem{sublemma}[theorem]{Sublemma}
\newtheorem{proposition}[theorem]{Proposition}
\newtheorem{corollary}[theorem]{Corollary}
\theoremstyle{definition}
\newtheorem{definition}[theorem]{Definition}
\newtheorem{remark}[theorem]{Remark}
\newcommand{\e}{\varepsilon}
\newcommand{\de}{\delta}
\begin{document}
\title{Generalized virtualization on welded links}

\author[Haruko A. Miyazawa]{Haruko A. Miyazawa}
\address{Institute for Mathematics and Computer Science, Tsuda University,2-1-1 Tsuda-Machi, Kodaira, Tokyo, 187-8577, Japan}
\curraddr{}
\email{aida@tsuda.ac.jp}
\thanks{}

\author[Kodai Wada]{Kodai Wada}
\address{Faculty of Education and Integrated Arts and Sciences, Waseda University, 1-6-1 Nishi-Waseda, Shinjuku-ku, Tokyo, 169-8050, Japan}
\curraddr{}
\email{k.wada8@kurenai.waseda.jp}
\thanks{The second author was supported by a Grant-in-Aid for JSPS Research Fellow (\#17J08186) of the Japan Society for the Promotion of Science.}

\author[Akira Yasuhara]{Akira Yasuhara}
\address{Faculty of Commerce, Waseda University, 1-6-1 Nishi-Waseda, Shinjuku-ku, Tokyo, 169-8050, Japan}
\curraddr{}
\email{yasuhara@waseda.jp}
\thanks{The third author was partially supported by a Grant-in-Aid for Scientific Research (C) (\#17K05264) of the Japan Society for the Promotion of Science and a Waseda University Grant for Special Research Projects (\#2018S-077).}

\subjclass[2010]{57M25, 57M27}

\keywords{Welded knot, welded link, virtualization, unknotting operation, arrow calculus, Alexander polynomial, elementary ideal, associated core group, multiplexing of crossings.}



\begin{abstract}
Let $n$ be a positive integer. 
The aim of this paper is to study two local moves $V(n)$ and $V^{n}$ on welded links,  
which are generalizations of the crossing virtualization.  
We show that the $V(n)$-move is an unknotting operation on welded knots for any $n$, and give a classification of welded links up to $V(n)$-moves. 
On the other hand, 
we give a necessary condition for which two welded links are equivalent up to $V^{n}$-moves. 
This leads to show that the $V^{n}$-move is not an unknotting operation on welded knots except for $n=1$. 
We also discuss relations among $V^{n}$-moves, associated core groups and the multiplexing of crossings.
\end{abstract}

\maketitle

\section{Introduction} 
A $\mu$-component {\em virtual link diagram} is the image of an immersion of ordered and oriented $\mu$ circles in the plane, whose transverse double points admit not only {\em classical crossings} but also {\em virtual crossings} illustrated in Figure~\ref{xing}.\footnote{For simplicity, we do not use here the usual drawing convention for virtual crossings, which is a small circle around the corresponding double point.}
We emphasize that a virtual link diagram is always ordered and oriented unless otherwise specified.

\begin{figure}[htbp]
  \begin{center}
    \begin{overpic}[width=4.5cm]{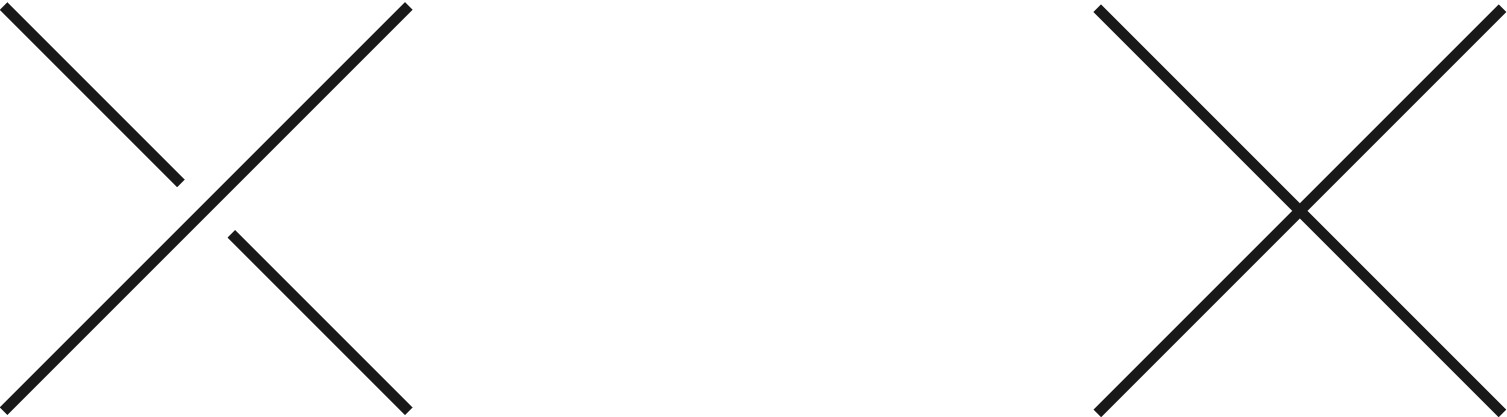}
      \put(-19,-15){classical crossing}
      \put(78,-15){virtual crossing}
    \end{overpic}
  \end{center}
  \vspace{1em}
  \caption{}
  \label{xing}
\end{figure}

A {\em virtual link} is an equivalence class of virtual link diagrams under {\em generalized Reidemeister moves}, which consist of Reidemeister moves R1--R3 and virtual moves VR1--VR4 illustrated in Figure~\ref{GRmoves}~\cite{Kauffman}.
In virtual context, there are two forbidden local moves OC and UC (meaning {\em over-crossings} and {\em under-crossings commute}, respectively) illustrated  in Figure~\ref{Fmoves}.  
The extension of the generalized Reidemeister moves which also allows the OC-move is called {\em welded Reidemeister moves}, and  
a sequence of welded Reidemeister moves is called a {\em welded isotopy}. 
A {\em welded link} is an equivalence class of virtual link diagrams 
under welded isotopy~\cite{FRR}.

\begin{figure}[htbp]
  \begin{center}
    \begin{overpic}[width=12cm]{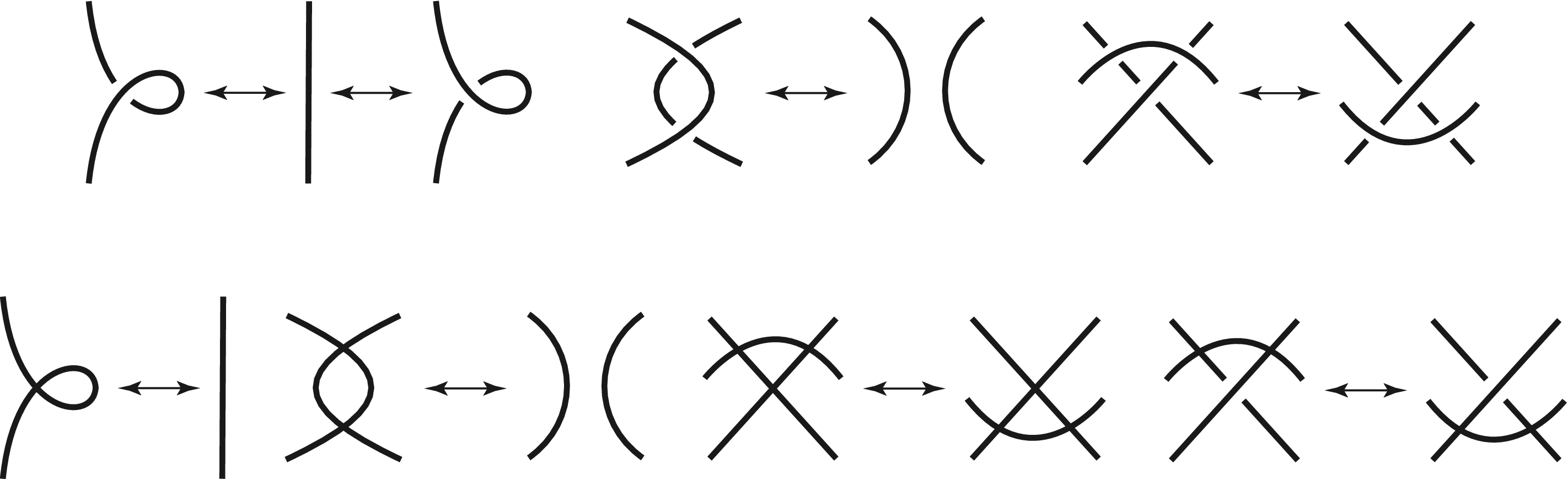}
      \put(24.5,24.5){VR1}
      \put(90.5,24.5){VR2} 
      \put(186.5,24.5){VR3}
      \put(287,24.5){VR4}
      \put(47.5,88){R1}
      \put(75,88){R1}
      \put(169.2,88){R2}
      \put(272.5,88){R3}
    \end{overpic}
  \end{center}
  \caption{Generalized Reidemeister moves}
  \label{GRmoves}
\end{figure}

\begin{figure}[htbp]
  \begin{center}
    \begin{overpic}[width=8.5cm]{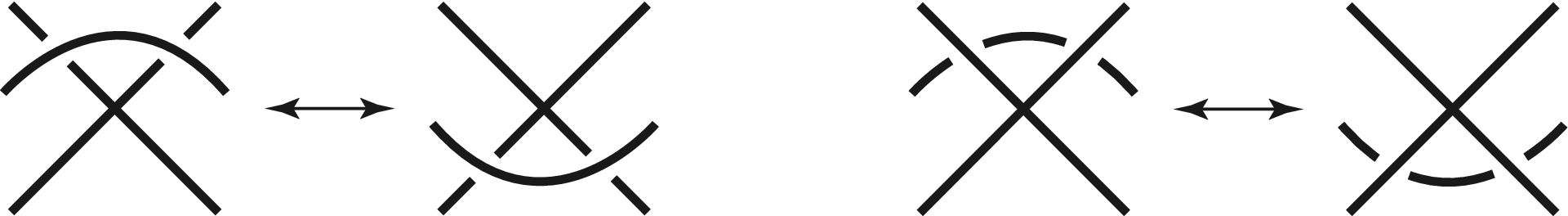}
      \put(43.4,20.5){OC}
      \put(183.5,20.5){UC}
    \end{overpic}
  \end{center}
  \caption{Forbidden moves OC and UC}
  \label{Fmoves}
\end{figure}

A virtual link diagram is {\em classical} if it has no virtual crossings, 
and a welded link is {\em classical} if it has a classical link diagram.
M.~Goussarov, M.~Polyak and O.~Viro~\cite{GPV} essentially proved that welded isotopic classical link diagrams can be related by Reidemeister moves R1--R3. 
Therefore, welded links can be viewed as a natural extension of classical links.
We remark that any virtual knot diagram can be unknotted by UC-moves and welded Reidemeister moves~\cite{GPV,Kanenobu,N}.  
This result is one reason why the UC-move is still forbidden in welded context.

In classical knot theory, 
local moves have played important roles and hence have been studied widely; see for example~\cite{M,MN,HNT,A,MiyaY,DP02,DP04}. 
Recently, some ``classical'' 
local moves, which exchange classical tangle diagrams,  
 have been studied for welded knots and links~\cite{ABMW-JMSJ,ABMW,S,NNSY}.  
In this paper, we will study ``non-classical'' local moves for welded links. 
A typical non-classical local move is the crossing virtualization. 
The {\em crossing virtualization} V is a local move on a virtual link diagram replacing a classical crossing with a virtual one; see the left-hand side of Figure~\ref{virtualization}. 
We remark that 
any virtual link diagram can be deformed into a diagram of the trivial link by applying the crossing virtualization repeatedly. 
The crossing virtualization is equivalent to the local move illustrated  in the right-hand side of Figure~\ref{virtualization}. 
Here, two local moves are {\em equivalent} if each move is realized by a sequence of the other move and welded Reidemeister moves.

\begin{figure}[htbp]
  \begin{center}
    \begin{overpic}[width=12cm]{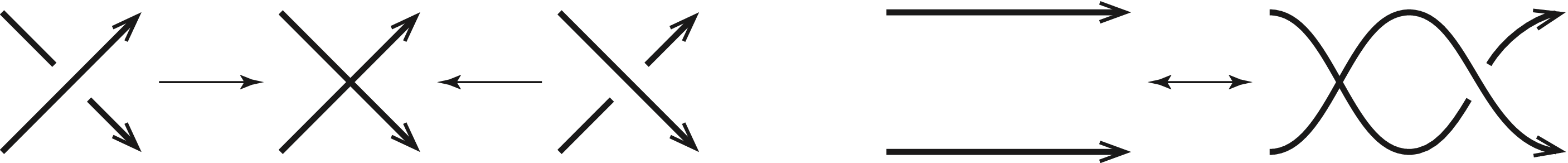}
      \put(41,20){V}
      \put(104,20){V}
    \end{overpic}
  \end{center}
  \caption{Crossing virtualization}
  \label{virtualization}
\end{figure}

Let $n$ be a positive integer. 
The aim of this paper is to study 
two oriented local moves $V(n)$ and $V^{n}$ 
illustrated  in the upper and lower sides of Figure~\ref{GV}, respectively. 
They are considered as generalizations of the crossing virtualization. 
In fact, both $V(1)$- and $V^{1}$-moves are equivalent to the crossing virtualization. 
Note that if $n$ is even then the $V(n)$-move may change the number of components. 
Two welded links are {\em $V(n)$-equivalent} (resp. {\em $V^{n}$-equivalent}) if their diagrams are related by $V(n)$-moves (resp. $V^{n}$-moves) and welded Reidemeister moves.

\begin{figure}[htbp]
  \begin{center}
    \begin{overpic}[width=11cm]{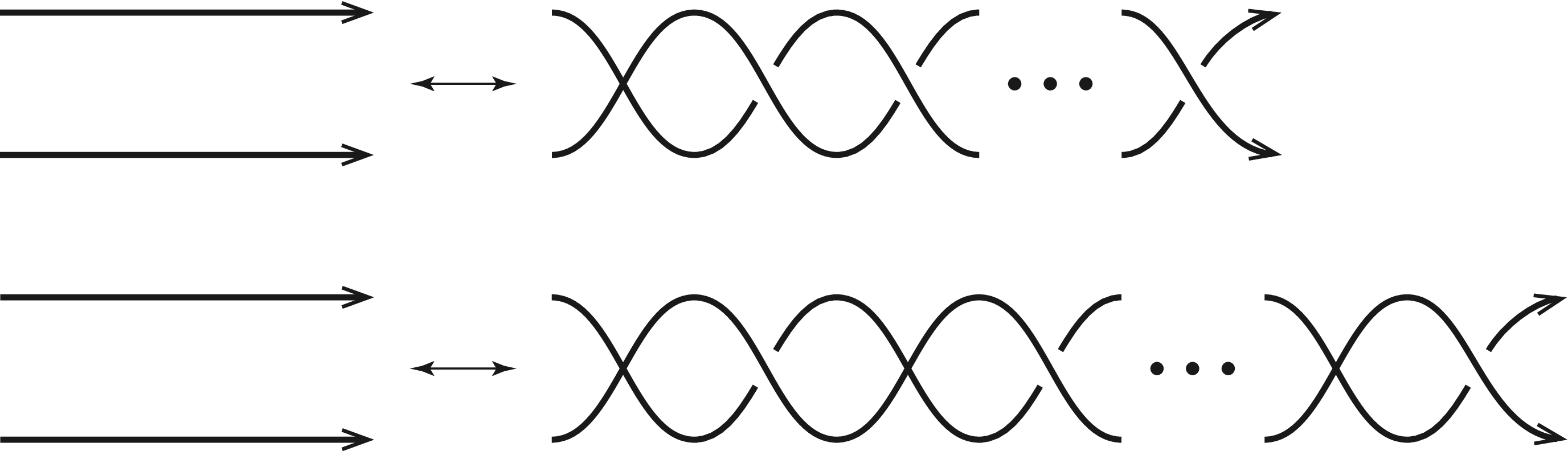}
      \put(83,79){$V(n)$}
      \put(152,59){{\footnotesize $1$}}
      \put(179.5,59){{\footnotesize $2$}}
      \put(237,59){{\footnotesize $n$}}
      \put(88,20){$V^{n}$}
      \put(152,2){{\footnotesize $1$}}
      \put(208,2){{\footnotesize $2$}}
      \put(293,2){{\footnotesize $n$}}
    \end{overpic}
  \end{center}
  \caption{$V(n)$- and $V^{n}$-moves}
  \label{GV}
\end{figure}

We have that the $V(n)$-move is an unknotting operation on welded knots for any positive integer $n$ because a UC-move is realized by a sequence of the $V(n)$-move and welded Reidemeister moves (Proposition~\ref{prop-UC}).
Moreover, we give a classification of welded links up to $V(n)$-equivalence in the sense of Theorems~\ref{th-even} and~\ref{th-odd}.

\begin{theorem}
\label{th-even}
Let $n$ be an even number. 
Any welded link is $V(n)$-equivalent to the unknot.
\end{theorem}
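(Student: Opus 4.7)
The plan is to prove the theorem in two stages: first reduce any welded link to a disjoint union of unknots, and then merge these components into a single unknot using the parity hypothesis on $n$.

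For the first stage, I would appeal to Proposition~\ref{prop-UC}: since the UC-move is realized by a sequence of a $V(n)$-move and welded Reidemeister moves, and the OC-move is already built into welded isotopy, the $V(n)$-equivalence contains both forbidden moves together with generalized Reidemeister moves. By the classical unknotting-under-forbidden-moves results of~\cite{GPV,Kanenobu,N}, the combination of OC, UC and generalized Reidemeister moves reduces every virtual link to a trivial link. Hence any welded link is $V(n)$-equivalent to a trivial link on some number of components.

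For the second stage, the parity of $n$ is used essentially. Since $n$ is even, a $V(n)$-move can change the number of components of the underlying diagram, as noted in the introduction. I would give an explicit local construction identifying a pre-move configuration of the $V(n)$-move in which two of the arcs belong to different components of the trivial link while the post-move configuration fuses these arcs into a single component. Iterating such a merge reduces the number of components from $\mu$ down to $1$, producing a welded knot which is in turn $V(n)$-equivalent to the unknot by Proposition~\ref{prop-UC}.

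The main obstacle will be the explicit diagrammatic realization of the merging move: one must locate, in the picture of $V(n)$ for even $n$, a pre-move strand configuration and a matching post-move strand configuration witnessing that the number of local arc-components drops by one, and then verify that any two components of a trivial link can be brought by welded isotopy into the required pre-move position. Once this local verification is carried out, iteration over pairs of components and appeal to the first stage yield the theorem.
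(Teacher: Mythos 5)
Your overall strategy (reduce the number of components using the parity of $n$, then unknot via Proposition~\ref{prop-UC}) contains the right ingredients, but your first stage has a genuine error. The claim that ``the combination of OC, UC and generalized Reidemeister moves reduces every virtual link to a trivial link'' is false for links with more than one component. The results you cite from~\cite{GPV,Kanenobu,N} are statements about virtual \emph{knots} only. For links, the ordered linking numbers $\lambda_{ij}$ are invariant under welded isotopy \emph{and} under UC-moves --- the paper states this explicitly in the introduction, and the classification results \cite[Theorem~8]{O} and \cite[Theorem~4.7]{Nasybullov} quoted in the Remark after Theorem~\ref{th-V^n} say that welded links modulo UC-moves are classified by exactly these numbers. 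So, for instance, the Hopf link has $\lambda_{12}=1$ and cannot be carried to the two-component trivial link by OC, UC and generalized Reidemeister moves. Your stage one therefore does not produce a trivial link.

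The repair is to reverse the order of your two stages, which is precisely what the paper does: first use the fact that for even $n$ the $V(n)$-move can change the number of components to fuse all components of the given welded link into a single welded \emph{knot} (at which point no linking-number obstruction remains, since there is only one component), and only then invoke Proposition~\ref{prop-UC} to unknot it. Your stage two already contains the needed merging argument, so the content is there; but as written the logical order matters, and the intermediate assertion that an arbitrary welded link becomes trivial under forbidden moves alone is not salvageable.
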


Let $D$ be a virtual link diagram.   
For any $i,j$ $(i\neq j)$, let $\lambda_{ij}(D)$ denote
the sum of the signs of all classical crossings of $D$ whose overpasses and underpasses belong to the $i$th and $j$th component, respectively. 
The integer $\lambda_{ij}(D)$ is a welded link invariant and also preserved by UC-moves. 
For a welded link $L$, 
the {\em ordered linking number $\lambda_{ij}(L)$} is defined to be $\lambda_{ij}(D)$ for a diagram $D$ of $L$.

It is not hard to see that if $n$ is odd then the modulo-$n$ reduction of $\lambda_{ij}(L)+\lambda_{ji}(L)$ is preserved by $V(n)$-moves. 
Using these invariants we have the following.

\begin{theorem}
\label{th-odd}
Let $n$ be an odd number. 
Two $\mu$-component welded links 
$L$ and $L'$ are $V(n)$-equivalent if and only if
$\lambda_{ij}(L)+\lambda_{ji}(L)\equiv \lambda_{ij}(L')+\lambda_{ji}(L')$ $\pmod{n}$ for any $i,j$ $(1\leq i<j\leq\mu)$.
\end{theorem}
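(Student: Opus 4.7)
Since each $\lambda_{ij}(L)+\lambda_{ji}(L)$ is already a welded link invariant, I only need to check that it is preserved modulo $n$ by a single $V(n)$-move. I would compute directly from Figure~\ref{GV}: enumerate the possible component assignments for the $n+1$ arcs meeting the local picture, compute the change in $\lambda_{ij}+\lambda_{ji}$ in each case, and verify that the result is a multiple of $n$. The hypothesis that $n$ is odd should enter here to arrange that the $\pm 1$ sign contributions of the crossings combine to a full multiple of $n$ rather than leaving a nontrivial residue; this is consistent with Theorem~\ref{th-even}, where the same computation is expected to fail for even $n$.

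\textbf{Converse direction.} First, by Proposition~\ref{prop-UC} every UC-move is realizable by $V(n)$-moves and welded Reidemeister moves, so I may freely perform UC-moves inside any $V(n)$-equivalence. Second, I would invoke the classification of \emph{fused links} (welded links up to UC): two welded links are fused equivalent if and only if their ordered linking matrices $(\lambda_{ij})_{i\neq j}$ coincide. Hence, up to fused equivalence, I may assume that $L$ and $L'$ are standard diagrams completely determined by their linking matrices. Third, the remaining task is to show that $V(n)$-moves (modulo fused equivalence) generate all transformations of $(\lambda_{ij})_{i\neq j}$ preserving $\lambda_{ij}+\lambda_{ji}\pmod n$ for each $i<j$. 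For this it suffices to realize two elementary transformations: (a) $\lambda_{ij}\mapsto \lambda_{ij}\pm n$ with all other entries fixed, and (b) $(\lambda_{ij},\lambda_{ji})\mapsto (\lambda_{ij}+1,\lambda_{ji}-1)$ with all other entries fixed. Move~(a) is obtained by applying $V(n)$ to a local configuration in which the transversal arc belongs to component~$i$ and all $n$ parallel arcs belong to component~$j$, so that one $V(n)$-move trades $n$ classical $(i,j)$-crossings of a fixed sign for their virtualizations.

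The main obstacle is move~(b), the \emph{transfer} move. I would construct it by combining one $V(n)$-move with an R2-insertion of an auxiliary pair of cancelling $(i,j)$-crossings, arranged so that the $V(n)$-move simultaneously eliminates one $(j,i)$-crossing (decreasing $\lambda_{ji}$ by $1$) and leaves one newly created $(i,j)$-crossing (increasing $\lambda_{ij}$ by $1$); any surplus auxiliary crossings introduced en route must then be absorbed by further applications of~(a) together with UC-moves. The parity and sign bookkeeping in this construction will be the most delicate part of the proof, and is where I expect to have to work hardest.
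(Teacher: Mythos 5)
Your strategy for the converse is genuinely different from the paper's. The paper never invokes the classification of fused links: it normalizes an arbitrary arrow presentation to a canonical form $\prod_{1\leq i<j\leq \mu}(\mathbf{1},H_{ij}(a_{ij}))$ with $0\leq a_{ij}<n$ using $A(n)$-moves and arrow moves (Lemma~\ref{lem-A(n)}), so that the classification up to UC-moves is reproved along the way rather than assumed. Your reduction of the problem to the two elementary transformations (a) and (b) is correct group theory: the subgroup of $\mathbb{Z}^{2}$ preserving $\lambda_{ij}+\lambda_{ji}$ modulo $n$ is generated by $(n,0)$ and $(1,-1)$, so realizing (a) and (b) by $V(n)$-moves modulo fused equivalence would indeed finish the proof, and citing the Okabayashi--Nasybullov classification is legitimate prior work, just less self-contained.

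There is, however, a genuine gap at move (b), and it sits exactly where the content of the theorem lies (namely, that $\lambda_{ij}$ and $\lambda_{ji}$ are \emph{not} separately invariant modulo $n$, only their sum is). The $V(n)$-move is an oriented local move with a prescribed over/under and orientation pattern on its $n$ strands, so a single application changes the collection of classical crossings between components $i$ and $j$ in a rigid way; it is not clear that one application together with R2-insertions can be ``arranged'' to delete a $(j,i)$-crossing while creating an $(i,j)$-crossing with the correct net count. As written, this step is a statement of intent rather than a construction. In the paper the transfer is accomplished by the head-tail reversal move (Figure~\ref{HTrev}), which converts a w-arrow pointing from the $i$th strand to the $j$th strand into one pointing the other way at the cost of auxiliary arrows that are then cancelled; its realization by AR9 together with an $A(n)$-move (Figure~\ref{pf-HTrev}) is a nontrivial piece of arrow calculus and is precisely where the oddness of $n$ is exploited. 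Until you exhibit an explicit diagrammatic realization of (b), the proof is incomplete at its most critical step. Two smaller points: the forward direction is only a plan for a computation (the paper likewise leaves it as an assertion, so this is minor); and for move (a) you must verify that the orientations and crossing signs forced by the $V(n)$ template are compatible with placing all $n$ parallel arcs on component $j$ with a uniform sign --- the paper needs a separate argument (the equivalence of $V(n)$- and $\overline{V}(n)$-moves in its final section) to obtain this kind of orientation flexibility.
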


On the other hand, $V^{n}$-moves preserve the modulo-$n$ reduction of $\lambda_{ij}(L)$ for any positive integer $n$. 
However, 
these invariants are not strong enough to classify welded links up to $V^{n}$-equivalence 
because the $V^{n}$-move is not an unknotting operation on welded knots except for $n=1$ (Proposition~\ref{prop-trefoil}).
Considering the UC-move, which is an unknotting operation for welded knots, we have the following.

\begin{theorem}
\label{th-V^n}
Let $n$ be a positive integer. 
Two $\mu$-component welded links 
$L$ and $L'$ are $(V^{n}+{\rm UC})$-equivalent if and only if  
$\lambda_{ij}(L)\equiv\lambda_{ij}(L')$ $\pmod{n}$ for any $i,j$ $(1\leq i\neq j\leq\mu)$. 
\end{theorem}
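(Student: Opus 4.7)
The plan is to handle the two directions separately, with the forward implication being a direct invariance check and the reverse implication proceeding by a reduction to the UC-classification of welded links.

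\textbf{Only if.} I would verify that each of the two move types preserves $\lambda_{ij}\pmod{n}$. The UC-move is already noted in the introduction to preserve $\lambda_{ij}$ on the nose, since the two crossings affected by a UC-move share their under-strand, so neither the signs nor the over/under components of the two classical crossings change. For the $V^{n}$-move I would read off the local picture in Figure~\ref{GV}: the move replaces $n$ classical crossings that share a common over-strand (belonging to some component $i$) with virtual ones, so the contribution removed from $\lambda_{ij}$ is the signed count of those $n$ crossings whose under-strand lies in component $j$. Because all $n$ classical crossings carry the same sign, this contribution lies in $n\mathbb{Z}$, and hence the modulo-$n$ reductions of every $\lambda_{ij}$ are preserved. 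This handles the ``only if'' direction.

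\textbf{If.} Here I would argue in two stages. \emph{Stage A: UC alone classifies welded links by the full tuple $(\lambda_{ij})_{i\neq j}$.} The idea is that under UC-moves, applied together with welded Reidemeister moves, all arrows in the Gauss diagram attached to each component may be permuted freely past one another, so the diagram reduces to a standard model in which each pair of components is connected by $\lambda_{ij}$ signed arrows and nothing more. I would either cite this classification (it fits within the arrow-calculus framework listed among the paper's keywords) or sketch it directly along those lines. \emph{Stage B: a single $V^{n}$-move can adjust any chosen $\lambda_{ij}$ by exactly $\pm n$ without affecting any other $\lambda_{kl}$.} Concretely, I would insert a cancelling pair of local arcs of component $i$ (via R2 and Reidemeister moves) that meet $n$ parallel strands of component $j$ in $n$ crossings of equal sign; a $V^{n}$-move virtualizes these simultaneously, and after retracting the arcs the net effect on the invariants is to alter $\lambda_{ij}$ by $\pm n$ alone.

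Combining the two stages yields the conclusion. Given $L$ and $L'$ with $\lambda_{ij}(L)\equiv\lambda_{ij}(L')\pmod n$ for all $i\neq j$, Stage B lets me apply finitely many $V^{n}$-moves to $L$ to obtain a welded link $L''$ with $\lambda_{ij}(L'')=\lambda_{ij}(L')$ for every ordered pair, and Stage A then produces a UC-and-Reidemeister path from $L''$ to $L'$.

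\textbf{Main obstacle.} The delicate point is Stage A, namely the clean statement that welded links modulo UC-moves are classified by the collection of ordered linking numbers. Stage B and the ``only if'' direction are essentially local pictures, but Stage A asks one to show both that $\lambda_{ij}$ suffices as a complete invariant and that no other obstruction survives the passage to UC-equivalence; this is where I would expect to invest the real work, either by a direct arrow-calculus reduction or by quoting an existing UC-classification result.
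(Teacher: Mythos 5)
Your proposal is correct in outline, but it takes a genuinely different route from the paper. For the ``if'' direction you decompose the problem into (A) the classification of welded links up to UC-moves by the full tuple of ordered linking numbers, and (B) the observation that a $V^{n}$-move applied to a locally created all-virtual configuration shifts a single $\lambda_{ij}$ by $\pm n$ and nothing else. This is precisely the derivation that the remark following Theorem~\ref{th-V^n} attributes to the known classifications of Okabayashi and Nasybullov, and it is legitimate provided you cite one of those results for Stage~A --- which you rightly identify as the place where all the real work sits. (In Stage~B, note that you only ever need to \emph{increase} a linking number, applied to whichever of the two links has the smaller value of $\lambda_{ij}$, so you need not worry about realizing both signs with the oriented local picture; also, the $V^{n}$-move involves one strand passing under a single second strand $n$ times with equal signs rather than $n$ parallel strands, though this does not affect your mod-$n$ count.) The paper deliberately avoids quoting the classification: it instead proves a normal-form lemma in arrow calculus (Lemma~\ref{lem-A^n}), reducing any string-link arrow presentation by $A^{n}$-moves, ends exchange moves and arrow moves to a canonical stack $\prod_{i<j}\bigl((\mathbf{1},H_{ij}(a_{ij}))*(\mathbf{1},\overline{H}_{ij}(b_{ij}))\bigr)$ with $0\leq a_{ij},b_{ij}<n$; since $a_{ij}$ and $b_{ij}$ are read off as $\lambda_{ij}$ and $\lambda_{ji}$ of the closure, the congruence hypothesis forces the two normal forms to coincide, and Lemmas~\ref{lem-ends}, \ref{lem-UC} and \ref{lem-A^nV^n} translate ends exchange moves, $A^{n}$-moves and arrow moves back into UC-moves, $V^{n}$-moves and welded isotopy. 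What the paper's route buys is self-containedness --- the same reduction carried out without $A^{n}$-moves reproves the UC-classification you want to cite --- whereas your route is shorter if that classification is taken as known; your sketch of Stage~A (freely permuting arrows in the Gauss diagram) is essentially the germ of the paper's Lemma~\ref{lem-A^n} and would require the same machinery (heads/head-tail exchanges, cancellation of twisted pairs, removal of self-arrows) to make rigorous. Your ``only if'' direction agrees with the paper's implicit argument.
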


\noindent
Here, two welded links are {\em $(V^{n}+{\rm UC})$-equivalent} 
if their diagrams are related by $V^{n}$-moves, UC-moves and welded Reidemeister moves.

\begin{remark}
Theorem~\ref{th-V^n} easily follows from the classification of welded links up to UC-moves, given in \cite[Theorem~8]{O} and \cite[Theorem~4.7]{Nasybullov}. 
In this paper, we will prove Theorem~\ref{th-V^n} without using their classification. 
Our proof of the theorem contains a simple proof for their classification. 
\end{remark}

We also discuss relations among {\em unoriented} $V^{n}$-moves, associated core groups and the multiplexing of crossings. 
The {\em associated core group} is known as an unoriented classical link invariant~\cite{J,Kelly,FR,W}. 
This group is naturally extended to an unoriented welded link invariant, and furthermore, it is preserved by unoriented $V^{n}$-moves for any even number $n$ (Proposition~\ref{prop-V2}). 
In~\cite{MWY17} the authors introduced the notion of {\em multiplexing} of crossings for an unoriented $\mu$-component welded link $L$, 
which yields a new unoriented welded link $L(m_{1},\ldots,m_{\mu})$ associated with a $\mu$-tuple $(m_{1},\ldots,m_{\mu})$ of 
integers.  
For any $\mu$-tuple $(m_{1},\ldots,m_{\mu})$ of
even numbers, 
$L(m_{1},\ldots,m_{\mu})$ is deformed into the $\mu$-component trivial link by unoriented $V^{2}$-moves (Proposition~\ref{th-V2}). 
As a consequence, 
we have that there are infinitely many nontrivial welded knots whose associated core groups are isomorphic to that of the unknot (Theorem~\ref{prop-infinitely}).

\section{Arrow calculus}\label{sec-arrow}
To show Theorems~\ref{th-even}, \ref{th-odd} and \ref{th-V^n}, we will use {\em arrow calculus} introduced by J.-B. Meilhan and the third author in~\cite{MY}. 
In this section, we will briefly recall the basic definitions of arrow calculus from~\cite{MY}. 
We only need the notion of {\rm w}-arrow,  
and refer the reader to~\cite{MY} for more details of arrow calculus.

\begin{definition}
Let $D$ be a virtual link diagram. 
A {\em {\rm w}-arrow $\gamma$} for $D$ is an oriented arc immersed in the plane of the diagram such that;
\begin{enumerate}
\item the endpoints of $\gamma$ are contained in $D\setminus\{\text{crossings of $D$}\}$, 

\item all singularities of $\gamma$ are virtual crossings, 

\item all singularities between $D$ and $\gamma$ are virtual crossings, and

\item $\gamma$ has a number (possibly zero) of decorations $\bullet$ 
on the interior of $\gamma$, called {\em twists}, which are disjoint from all crossings.  
\end{enumerate}

The initial and terminal points of $\gamma$ are called the {\em tail} and the {\em head}, respectively. 
For a union of {\rm w}-arrows for $D$, all crossings among {\rm w}-arrows are assumed to be virtual. 
\end{definition}

Hereafter, diagrams are drawn with bold lines while {\rm w}-arrows are drawn with thin lines.

Let $A$ be a union of {\rm w}-arrows for $D$. 
Now we define {\em surgery along $A$} on $D$ which yields a new virtual link diagram, denoted by $D_{A}$, as follows. 
Suppose that there exists a disk in the plane which intersects $D\cup A$ as illustrated  in Figure~\ref{surgery}. 
Then the figure indicates the result of surgery along a {\rm w}-arrow of $A$ on $D$. 
We emphasize that the surgery move depends on the orientation of the strand of $D$ containing the tail of the {\rm w}-arrow.

\begin{figure}[htbp]
  \begin{center}
    \begin{overpic}[width=9cm]{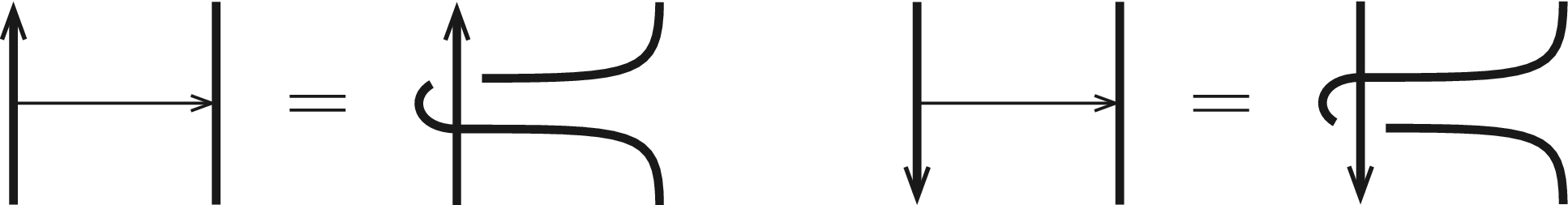}
      \put(5,-12){$D\cup A$} 
      \put(85,-12){$D_{A}$} 
      \put(153,-12){$D\cup A$}
      \put(233,-12){$D_{A}$} 
    \end{overpic}
  \end{center}
  \vspace{1em}
  \caption{Surgery along a {\rm w}-arrow of $A$ on $D$}
  \label{surgery}
\end{figure}

If a {\rm w}-arrow of $A$ intersects a (possibly the same) {\rm w}-arrow (resp. $D$), then the result of surgery is essentially the same as above but each intersection introduces virtual crossings illustrated  in the left-hand side (resp. center) of Figure~\ref{surgery2}.
Moreover, if a {\rm w}-arrow of $A$ has some twists, then each twist is converted to a half-twist whose crossing is virtual; see  the right-hand side of Figure~\ref{surgery2}.

\begin{figure}[htbp]
  \begin{center}
    \begin{overpic}[width=12cm]{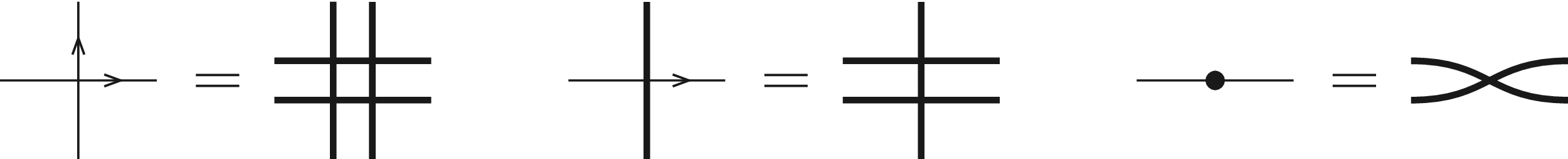}
      \put(-5,21){$A$}
      \put(12,-13){$A$} 
      \put(70,-13){$D_{A}$} 
      \put(119,21){$A$}
      \put(136,-13){$D$}
      \put(193,-13){$D_{A}$} 
      \put(261,-13){$A$}
      \put(318,-13){$D_{A}$} 
    \end{overpic}
  \end{center}
  \vspace{1em}
  \caption{}
  \label{surgery2}
\end{figure}

An {\em arrow presentation} for a virtual link diagram $D$ is a pair $(T,A)$ of a virtual link diagram $T$ without classical crossings and a union $A$ of {\rm w}-arrows for $T$ such that $T_{A}$ is welded isotopic to $D$. 
Every virtual link diagram has an arrow presentation 
because any classical crossing can be replaced by a virtual one with a {\rm w}-arrow; see Figure~\ref{Aprst}.

\begin{figure}[htbp]
  \begin{center}
    \begin{overpic}[width=6cm]{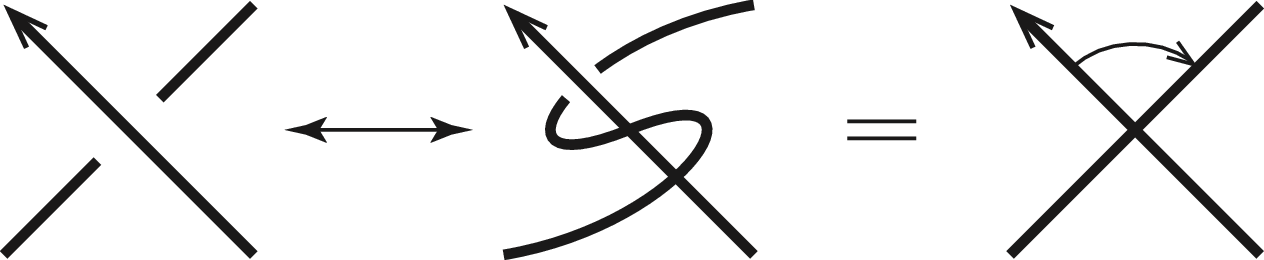}
      \put(41,22){VR2}
    \end{overpic}
  \end{center}
    \caption{}
  \label{Aprst}
\end{figure}

Two arrow presentations are {\em equivalent} if the surgeries yield welded isotopic virtual link diagrams. 
{\em Arrow moves} consist of virtual moves VR1--VR3 involving {\rm w}-arrows and/or strands of $D$ and the local moves AR1--AR10 on arrow presentations  illustrated  in Figure~\ref{Amoves}. 
Here, each vertical strand in AR1--AR3 is either a strand of $D$ or a {\rm w}-arrow, 
and the symbol $\circ$ on a {\rm w}-arrow in AR8 and AR10 denotes that the {\rm w}-arrow may or may not contain a twist. 
Two arrow presentations are equivalent if and only if they are related by arrow moves~\cite[Theorem 4.5]{MY}.

\begin{figure}[htbp]
  \begin{center}
    \begin{overpic}[width=11.5cm]{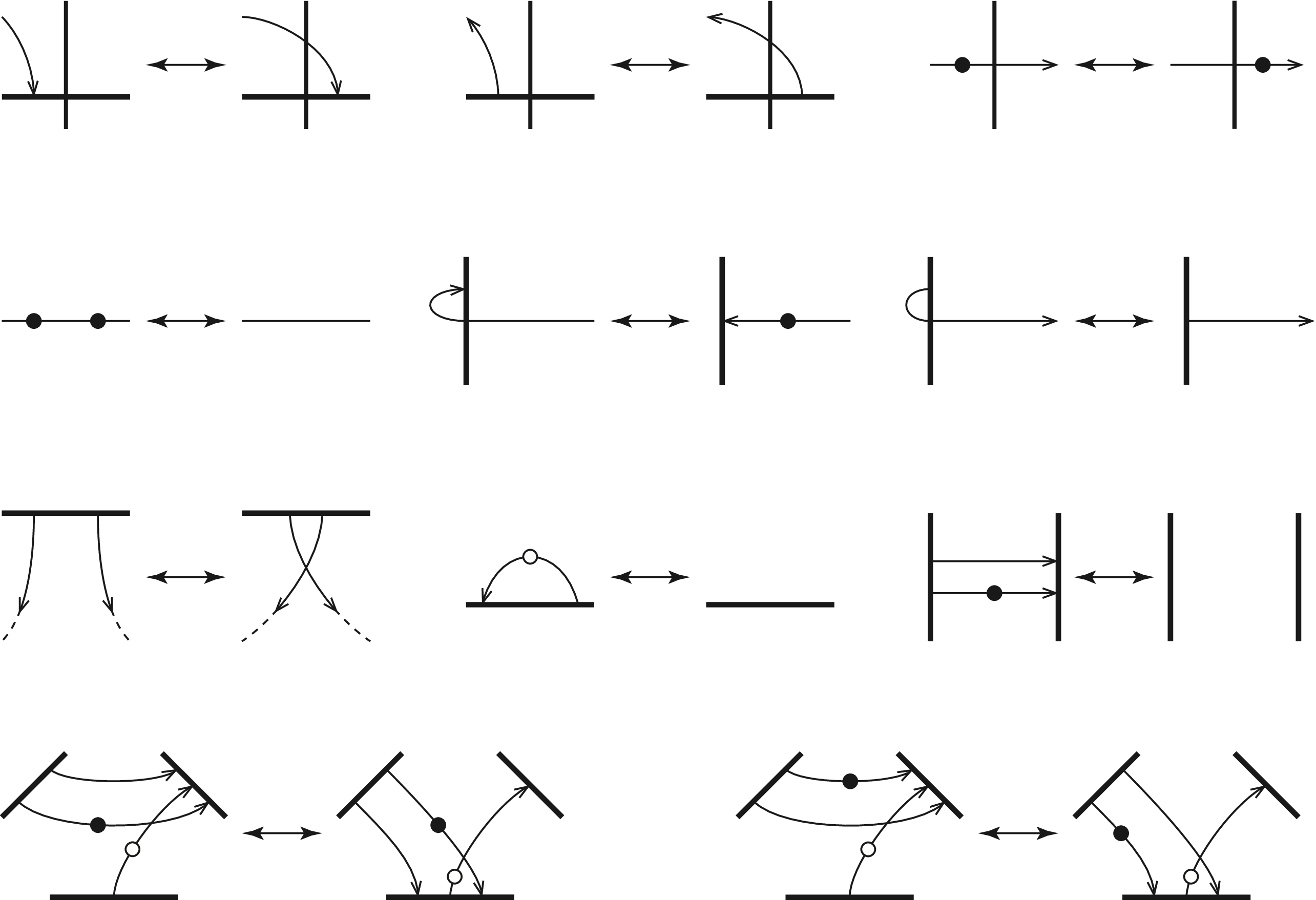}
      \put(37,212){AR1} 
      \put(152,212){AR2} 
      \put(267,212){AR3}
      \put(37,148.5){AR4} 
      \put(152,148.5){AR5}
      \put(267,148.5){AR6} 
      \put(37,84.5){AR7} 
      \put(152,84.5){AR8} 
      \put(267,84.5){AR9}
      \put(58,21){AR10} 
      \put(241.5,21){AR10}
    \end{overpic}
  \end{center}
  \caption{Arrow moves AR1--AR10}
  \label{Amoves}
\end{figure}

In the rest of this section, we will introduce several local moves on arrow presentations. 
We first consider two {\em allowable moves} AR11 and AR12 as illustrated in Figures~\ref{AR11} and \ref{AR12}, respectively.  
Each of the moves is realized by a sequence of arrow moves. 
Figure~\ref{pf-AR112} shows that the left-hand side moves in Figures~\ref{AR11} and \ref{AR12} are realized by arrow moves, 
where $\overset{\text{AR}}{\sim}$ in the figure denotes a sequence of arrow moves.
The other cases are similarly shown.

\begin{figure}[htbp]
  \begin{center}
    \begin{overpic}[width=11cm]{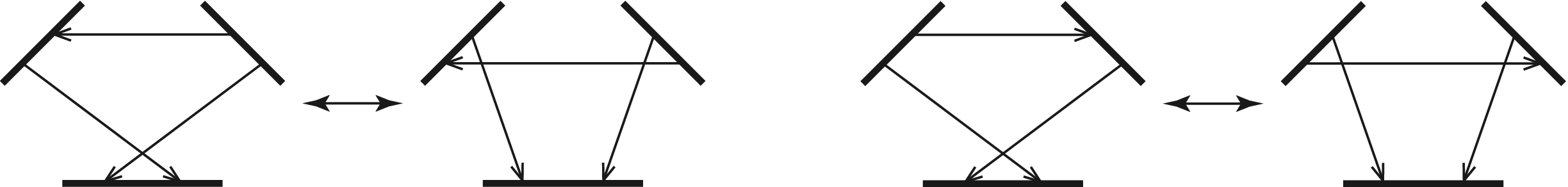}
      \put(58,20.5){AR11} 
      \put(230,20.5){AR11}
    \end{overpic}
  \end{center}
  \caption{Allowable move AR11}
  \label{AR11}

\vspace{1em}

  \begin{center}
    \begin{overpic}[width=11cm]{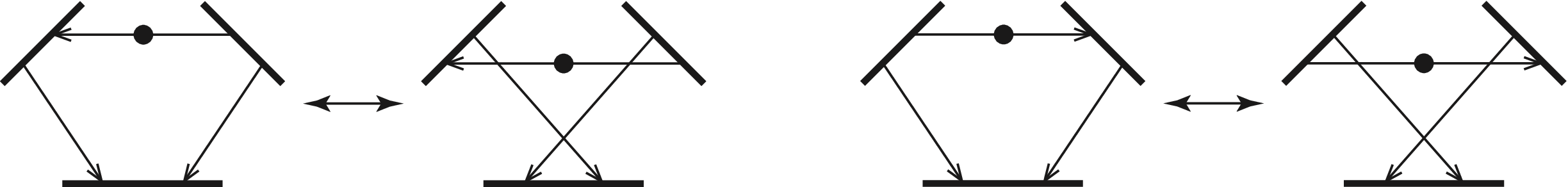}
      \put(58,20.5){AR12} 
      \put(230,20.5){AR12}
    \end{overpic}
  \end{center}
  \caption{Allowable move AR12}
  \label{AR12}
\end{figure}

\begin{figure}[htbp]
  \begin{flushright}
    \begin{overpic}[width=11.5cm]{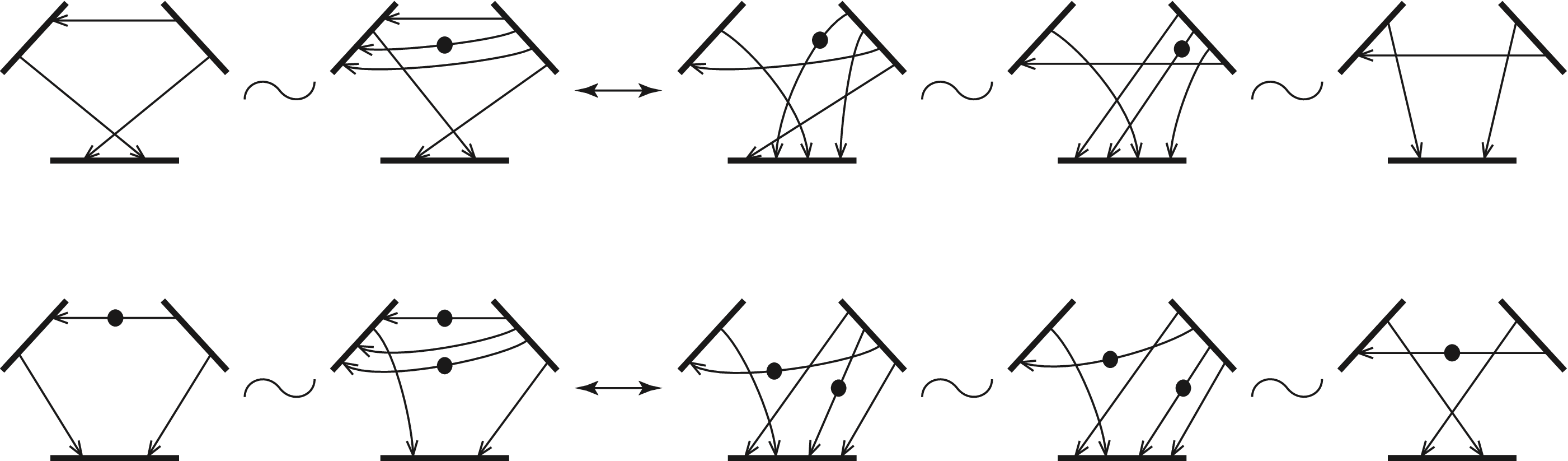}
      \put(-30,75){{\small AR11 :}}
      \put(-30,12){{\small AR12 :}}
      \put(51,81){{\small AR}} 
      \put(118,81){{\small AR10}} 
      \put(192.5,81){{\small AR}}
      \put(261.5,81){{\small AR}}
      
      \put(51,19){{\small AR}} 
      \put(118,19){{\small AR10}} 
      \put(192.5,19){{\small AR}} 
      \put(261.5,19){{\small AR}} 
    \end{overpic}
  \end{flushright}
  \caption{}
  \label{pf-AR112}
\end{figure}

The {\em heads exchange move}\footnote{Note that our definition slight differs from the one given in~\cite[Lemma 5.14]{MY}.} 
is a local move on an arrow presentation exchanging positions of consecutive two heads of {\rm w}-arrows; see Figure~\ref{Hexch}. 
While there are several kinds of heads exchange moves 
depending on the orientation of the strand containing the tail and existence or nonexistence of a twist for a {\rm w}-arrow, 
we have the following.

\begin{figure}[htbp]
  \begin{center}
    \begin{overpic}[width=5cm]{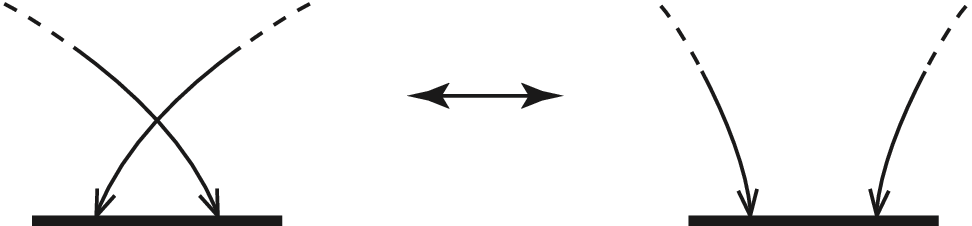}
    \end{overpic}
  \end{center}
  \caption{Heads exchange move}
  \label{Hexch}
\end{figure}

\begin{sublemma}
\label{lem-Hexch}
A heads exchange move is realized by a sequence of the {\rm H}-move illustrated  in Figure~$\ref{H}$ and arrow moves.
\end{sublemma}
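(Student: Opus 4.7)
The plan is to enumerate the variants of the heads exchange move and reduce each variant to the single H-move using the arrow moves AR1--AR10 together with the derived allowable moves AR11 and AR12. The variants are parametrized by two binary choices for each of the two w-arrows: the orientation of the strand carrying its tail (up or down) and the presence or absence of a twist on the w-arrow. This gives at most sixteen a priori distinct configurations, many of which are related by obvious planar symmetries of the picture, so in practice only a handful of essentially different cases need to be treated.

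First I would fix the configuration that defines the H-move and treat it as the known ingredient. The strategy is then, for any other configuration, to perform a sequence of arrow moves that transforms it into a picture containing a local H-move pattern, apply the H-move, and then undo the preparatory arrow moves. The twist-free cases with varying tail orientations should be handled by using AR7 (and the virtual Reidemeister moves VR1--VR3 on w-arrows) to reverse the relevant tail strand locally against a nearby arc before and after applying the H-move; the overall effect leaves the picture outside a small disk unchanged.

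To reduce cases containing twists to twist-free cases, I would push twists across heads and tails by means of AR8 and AR10, concentrating the twists on a portion of a w-arrow lying away from the heads that are being exchanged. Once the two heads are in the local configuration of the H-move without twists nearby, the H-move applies, after which the twists can be pushed back to their original positions by the inverses of AR8 and AR10. The allowable moves AR11 and AR12, which were already shown in Figure~\ref{pf-AR112} to be consequences of AR moves, are convenient for sliding a head past a nearby crossing of w-arrows during this process.

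The main obstacle I anticipate is bookkeeping rather than any deep issue: one has to verify that in every combination of tail-orientations and twist patterns, the preparatory AR moves and the final clean-up genuinely return the diagram to the intended post-exchange picture, with correct signs of twists and correct tail orientations. A clean way to handle this is to reduce, up to the planar symmetries of the heads exchange picture, to a short list of normal forms (say, both tails oriented upward, and twists either absent or concentrated on one chosen w-arrow) and then treat that short list explicitly, letting AR8 and AR10 handle the remaining twist configurations case by case.
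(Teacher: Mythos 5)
Your proposal is correct and takes essentially the same approach as the paper: there, too, each variant of the heads exchange move (differing by tail orientation and by the presence of twists) is reduced to a single application of the H-move conjugated by arrow moves, with two representative cases exhibited in Figure~\ref{pf-Hexch} and the rest left as similar. One small correction: reversing the local orientation of the strand at a tail is handled by AR10 (which trades a twist for a tail-strand orientation reversal), not by AR7, but this does not affect the overall strategy.
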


\begin{figure}[htbp]
  \begin{center}
    \begin{overpic}[width=5cm]{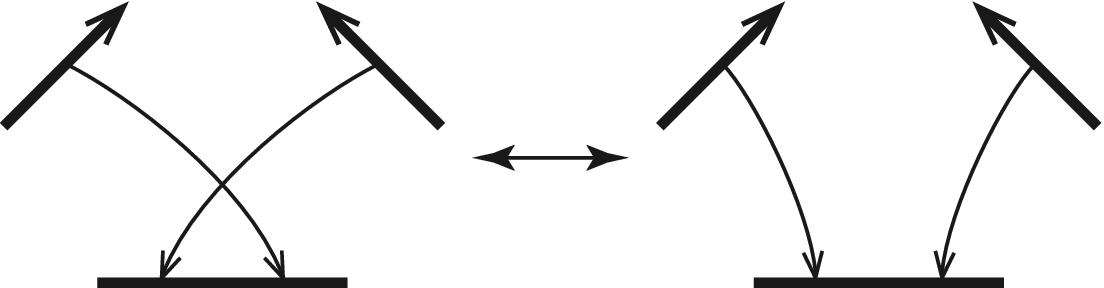}
      \put(67,20){H}
    \end{overpic}
  \end{center}
  \caption{H-move}
  \label{H}
\end{figure}

\begin{proof}
We demonstrate that two of the heads exchange moves are realized by  sequences of the H-move and arrow moves. 
The upper side of Figure~\ref{pf-Hexch} indicates the proof 
when the orientation of the strand containing the tail of a single {\rm w}-arrow is opposite to that of the H-move. 
The lower side of Figure~\ref{pf-Hexch} indicates the proof for the case where a {\rm w}-arrow has a twist. 
It is not hard to show the other cases. 
\end{proof}

\begin{figure}[htbp]
  \begin{center}
    \begin{overpic}[width=10.5cm]{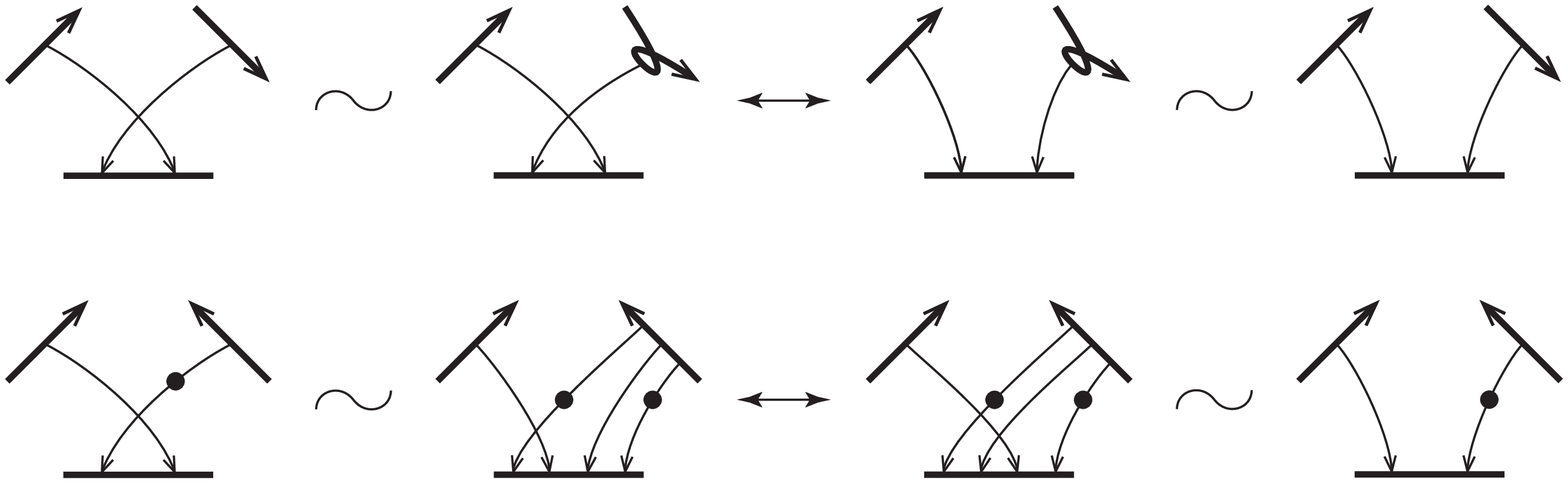}
      \put(59.5,77){AR}
      \put(145.5,76){H}
      \put(224.5,77){AR}
      \put(59.5,19.5){AR}
      \put(145,19){H}
      \put(224.5,19.5){AR}
    \end{overpic}
  \end{center}
  \caption{}
  \label{pf-Hexch}
\end{figure}

The {\em head-tail exchange move}\footnote{Note that our definition slight differs from the one given in~\cite[Lemma 5.16]{MY}.} 
is a local move on an arrow presentation exchanging positions of consecutive a head and a tail of {\rm w}-arrows; see Figure~\ref{HTexch}.

\begin{figure}[htbp]
  \begin{center}
    \begin{overpic}[width=5cm]{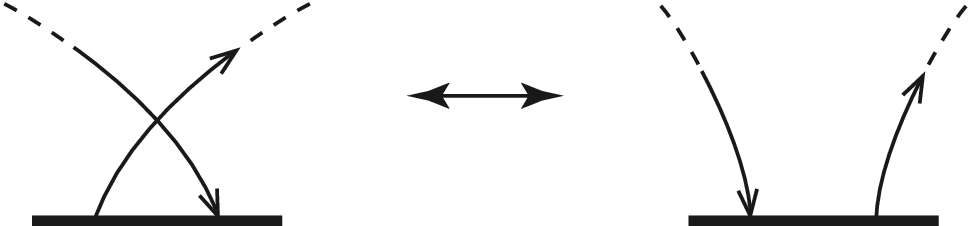}
    \end{overpic}
  \end{center}
  \caption{Head-tail exchange move}
  \label{HTexch}
\end{figure}

\begin{sublemma}
\label{lem-HTexch}
A head-tail exchange move is realized by a sequence of the heads exchange move and arrow moves. 
\end{sublemma}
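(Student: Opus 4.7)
The plan is to reduce the head-tail exchange to a heads exchange, so that Sublemma~\ref{lem-Hexch} can be applied. The obstacle is that one of the two consecutive w-arrow endpoints being swapped is a tail rather than a head, so Sublemma~\ref{lem-Hexch} does not apply directly. I would circumvent this by invoking the allowable moves AR11 and AR12 to locally create an auxiliary pair of canceling w-arrows near the tail, converting the configuration into one containing two consecutive heads.

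Concretely, suppose the head of a w-arrow $\alpha$ and the tail of a w-arrow $\beta$ appear consecutively on a strand of the underlying diagram, in that order. First, I would apply the appropriate allowable move (AR11 or AR12, chosen according to the orientation of the strand carrying the tail of $\beta$ and to whether $\beta$ carries twists) just past the tail of $\beta$. As read off from Figures~\ref{AR11} and~\ref{AR12}, this introduces a canceling pair of w-arrows one of whose heads is placed between the head of $\alpha$ and the tail of $\beta$. Now two consecutive heads stand where the original head--tail pair was, so the heads exchange move of Sublemma~\ref{lem-Hexch} applies and swaps them. Finally, I would apply the same allowable move in reverse to absorb the auxiliary pair back into the tail of $\beta$, which now sits on the far side of the head of $\alpha$, completing the head-tail exchange.

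The argument splits into sub-cases according to the orientation of the tail-bearing strand and the presence or absence of twists on $\alpha$ and $\beta$. Each sub-case is handled by the same three-step template---introduce a canceling pair via AR11 or AR12, apply the suitable variant of the heads exchange move, and retract the pair---so the extra cases add bookkeeping but no new idea.

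I expect the main obstacle to be selecting the correct orientation and twist data for the auxiliary pair in each sub-case, so that after the heads exchange the auxiliary pair cancels against what remains of $\beta$ without leaving extra twists or stray virtual crossings. This is exactly what AR11 and AR12 are designed to manage, with AR8 and AR10 handling the residual twist bookkeeping; once the correct variant is chosen, the rest is a routine diagrammatic manipulation in the spirit of Figures~\ref{pf-AR112} and~\ref{pf-Hexch}.
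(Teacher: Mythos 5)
There is a genuine gap here: the create--exchange--retract template you describe is a net identity on the pair $\{\alpha,\beta\}$ and therefore cannot realize the head-tail exchange. Track the tail of $\beta$ through your three steps. The allowable moves AR11 and AR12 create or annihilate a cancelling pair of w-arrows whose two heads (and whose two tails) are adjacent to each other, so after your first step the strand reads: head of $\alpha$, the two auxiliary heads, tail of $\beta$ --- with the tail of $\beta$ untouched. The heads exchange you then perform only transposes the head of $\alpha$ with an auxiliary head; the tail of $\beta$ is not an endpoint of any arrow involved in any of your moves, so it never changes sides relative to the head of $\alpha$. Moreover, before the auxiliary pair can be retracted its two heads must be reunited, which forces you to undo (or repeat) the exchange across the head of $\alpha$; either way, once the pair is annihilated you are back at the original configuration, not the exchanged one. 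The variant in which the auxiliary arrow is instead cancelled against $\beta$ itself fails for the same reason: making the tail of the auxiliary arrow adjacent to the tail of $\beta$ already requires sliding a tail past the head of $\alpha$, which is precisely the move being proved.

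The missing ingredient is that the exchange must be routed through the \emph{head} of $\beta$, not handled locally at its tail. Under surgery, moving the tail of $\beta$ past the head of $\alpha$ amounts to pushing the head of $\alpha$ through the finger created at the tail of $\beta$, and the only non-virtual interaction along that finger occurs at the head of $\beta$; so the one heads exchange that is genuinely needed takes place there. This is what Figure~\ref{pf-HTexch} implements: arrow moves and a single AR10 trade the adjacency at the tail of $\beta$ for an adjacency of heads at the head of $\beta$, one heads exchange is performed at that location, and arrow moves undo the setup. No auxiliary cancelling pair appears, and AR11/AR12 play no role in this sublemma.
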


\begin{proof}
See Figure~\ref{pf-HTexch}. 
\end{proof}

\begin{figure}[htbp]
  \begin{center}
    \begin{overpic}[width=12.5cm]{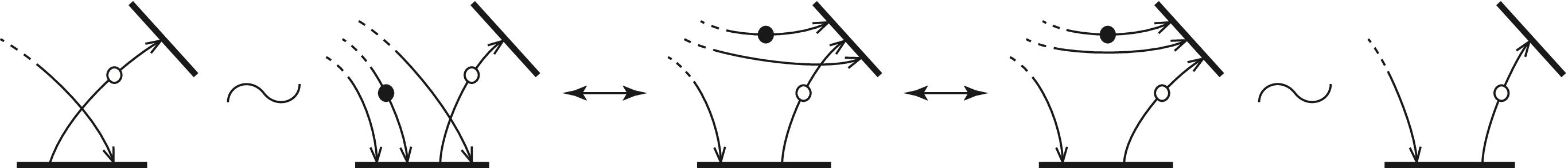}
      \put(52,22){AR}
      \put(125,22){AR10}
      \put(204,22){{\footnotesize heads}}
      \put(198,8){{\footnotesize exchange}}
      \put(286,22){AR}
    \end{overpic}
  \end{center}
  \caption{}
  \label{pf-HTexch}
\end{figure}

Three kinds of moves, AR7, heads exchange and head-tail exchange moves, are called {\em ends exchange moves}. 
From Sublemmas~\ref{lem-Hexch} and \ref{lem-HTexch} we have the following.

\begin{lemma}
\label{lem-ends} 
An ends exchange move is realized by a sequence of the {\rm H}-move and arrow moves. 
\end{lemma}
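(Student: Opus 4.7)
The lemma is a direct assembly of the two sublemmas preceding it, so I would not expect any new geometric input. My plan is simply to check each of the three types of moves that are called ends exchange moves (namely AR7, the heads exchange move, and the head-tail exchange move) and verify that it can be written as a composition of the H-move and arrow moves.

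First, AR7 is itself listed as one of the arrow moves (see Figure~\ref{Amoves}), so for that case there is literally nothing to prove: a single application of AR7 counts as a sequence of arrow moves, using zero H-moves. Second, the heads exchange move is handled by Sublemma~\ref{lem-Hexch}, which already states that such a move decomposes into an H-move together with arrow moves; again nothing further is needed.

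The only case that requires any assembly is the head-tail exchange move. Here I would invoke Sublemma~\ref{lem-HTexch} to replace a head-tail exchange by a sequence consisting of a heads exchange move interleaved with arrow moves. Then I would apply Sublemma~\ref{lem-Hexch} to that intermediate heads exchange, replacing it in turn by a sequence of H-moves and arrow moves. Concatenating the two sequences yields a realization of the head-tail exchange using only H-moves and arrow moves, as required.

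The main, and essentially only, potential obstacle is purely bookkeeping: to be fully rigorous one should note that Sublemma~\ref{lem-Hexch} covers all variants of the heads exchange (regardless of the orientations of the strands carrying the tails and of whether the w-arrows carry twists), so that the particular heads exchange produced inside the proof of Sublemma~\ref{lem-HTexch} is indeed covered. Since Sublemma~\ref{lem-Hexch} is stated and proved in that generality, the composition goes through without change. Hence the conclusion follows and the proof is just the single remark that Sublemmas~\ref{lem-Hexch} and \ref{lem-HTexch} compose.
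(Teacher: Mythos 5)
Your proposal matches the paper's argument exactly: the paper derives this lemma directly from Sublemmas~\ref{lem-Hexch} and \ref{lem-HTexch}, with AR7 being itself an arrow move, and the head-tail exchange case handled by composing the two sublemmas just as you describe. The reasoning is correct and complete.
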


\section{$V(n)$-moves and UC-moves}
\label{sec-moves}
In this section, we will show that the $V(n)$-move is an unknotting operation on welded knots. 
We start with the following lemma concerning the UC-move.

\begin{lemma}
\label{lem-UC}
An arrow presentation for a {\rm UC}-move is realized by a sequence of the heads exchange move and arrow moves. 
Conversely, surgery along a heads exchange move is realized by a sequence of the {\rm UC}-move and welded Reidemeister moves. 
\end{lemma}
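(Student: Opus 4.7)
The plan is to exploit the direct correspondence between an under-crossing in a diagram and a head of a w-arrow in its arrow presentation. Recall that the procedure in Figure~\ref{Aprst} replaces each classical crossing by a virtual crossing carrying a w-arrow whose head lies on the under-strand and whose tail lies on the over-strand. Under this translation, the pair of consecutive under-crossings that appear on the common strand in a UC-move correspond to two consecutive heads of w-arrows on a single strand of the underlying diagram without classical crossings, with their tails attached to disjoint strands.

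For the forward direction, I would draw both sides of the UC-move and apply the arrow-presentation procedure of Figure~\ref{Aprst} to each of the two classical crossings on either side. The two resulting arrow presentations share the same pattern of tails and w-arrow shapes; the only difference is the order in which the two heads sit on the under-strand. Converting one order to the other is, by definition, a heads exchange move, after cleaning up the ambient virtual crossings with a few VR2 moves and applications of the allowable moves AR11 and AR12 (Figures~\ref{AR11} and~\ref{AR12}) to slide the tail portions into position.

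For the converse direction, I would begin with an arbitrary heads exchange move (as in Figure~\ref{Hexch}) and perform surgery on both sides using the rule of Figure~\ref{surgery}. Each of the two heads becomes a classical crossing in which the strand formerly carrying the heads passes under a strand coming from the tail side. Thus the two surgered diagrams differ only in the order in which this common strand passes under the two other strands, which is exactly a UC-move. Any residual virtual crossings introduced by the surgery rules in Figure~\ref{surgery2} appear on both sides and are reconciled by generalized Reidemeister moves.

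The main obstacle is keeping track of the subcases: the heads exchange move comes in several flavors depending on the orientations of the strands containing the tails and on the presence or absence of twists on the w-arrows (cf.\ Sublemma~\ref{lem-Hexch}), and each flavor produces a particular pair of signed classical crossings after surgery. In every case the two produced crossings still share the same under-strand, so the configuration continues to match the UC-move pattern; one only needs to verify this sign-by-sign. On the forward side the analogous enumeration is controlled by the signs of the two classical crossings involved in the UC-move, and in both directions Sublemma~\ref{lem-Hexch} allows us to normalize the heads exchange moves to the single H-move modulo arrow moves, which keeps the case analysis manageable.
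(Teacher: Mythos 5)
Your proposal is correct and follows essentially the same route as the paper: the forward direction is the direct translation of the two sides of the UC-move into arrow presentations via the procedure of Figure~\ref{Aprst}, with the difference absorbed by a heads exchange move and arrow moves, and the converse is obtained by surgering and reducing the general heads exchange move to the H-move through Sublemma~\ref{lem-Hexch}. The paper likewise handles the orientation and twist subcases by exhibiting one representative case in a figure and noting the others are similar.
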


\begin{proof}
Figure~\ref{pf-UC} shows that an arrow presentation for a UC-move is realized by a sequence of the heads exchange move and arrow moves. 
In the figure, we choose certain orientations of two strands at the virtual crossing. 
The other cases are similarly shown.

\begin{figure}[htbp]
  \begin{center}
    \begin{overpic}[width=10cm]{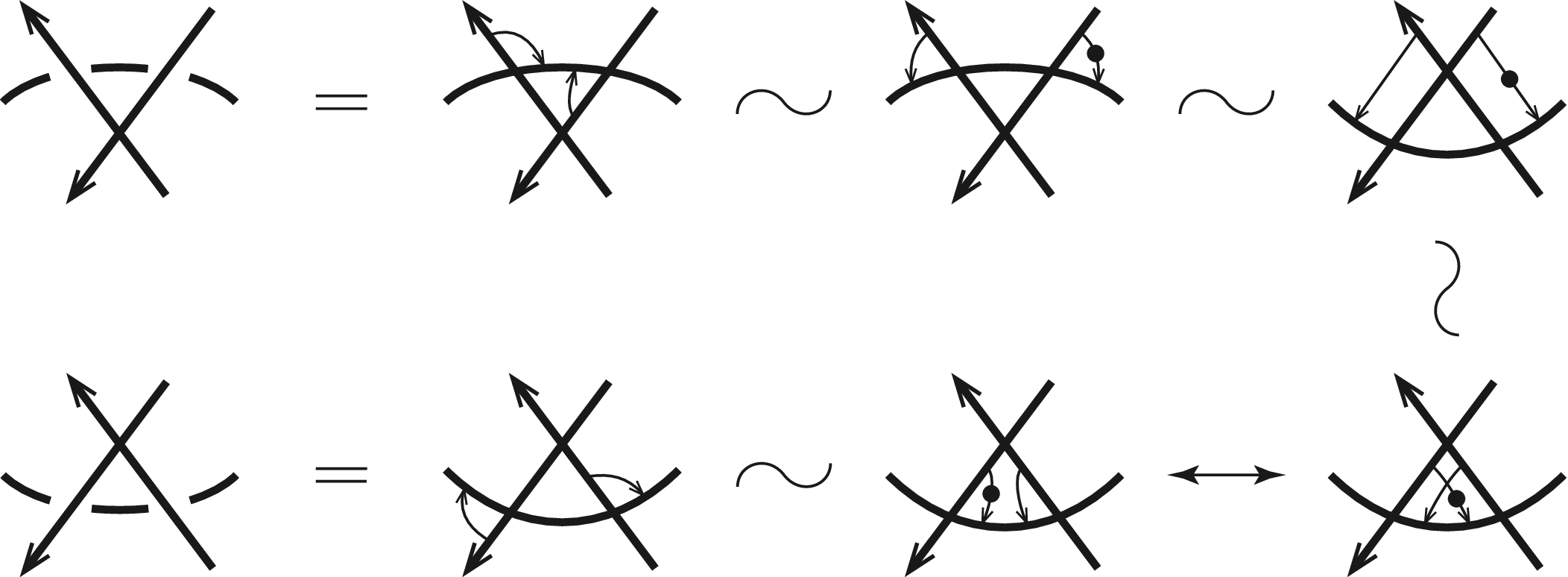}
      \put(134,91){AR}
      \put(214.5,91){AR}
      \put(267,49){AR}
      \put(212,23){{\footnotesize heads}}
      \put(206,9){{\footnotesize exchange}}
      \put(134,23){AR}
    \end{overpic}
  \end{center}
  \caption{} 
  \label{pf-UC}
\end{figure}

Conversely, Figure~\ref{UC-Hexch} shows that surgery along an H-move is realized by a sequence of the UC-move and welded Reidemeister moves,  
where $\overset{\text{w}}{\sim}$ in the figure denotes a welded isotopy. 
This and Sublemma~\ref{lem-Hexch} complete the proof. 
\end{proof}

\begin{figure}[htbp]
  \begin{center}
    \begin{overpic}[width=10cm]{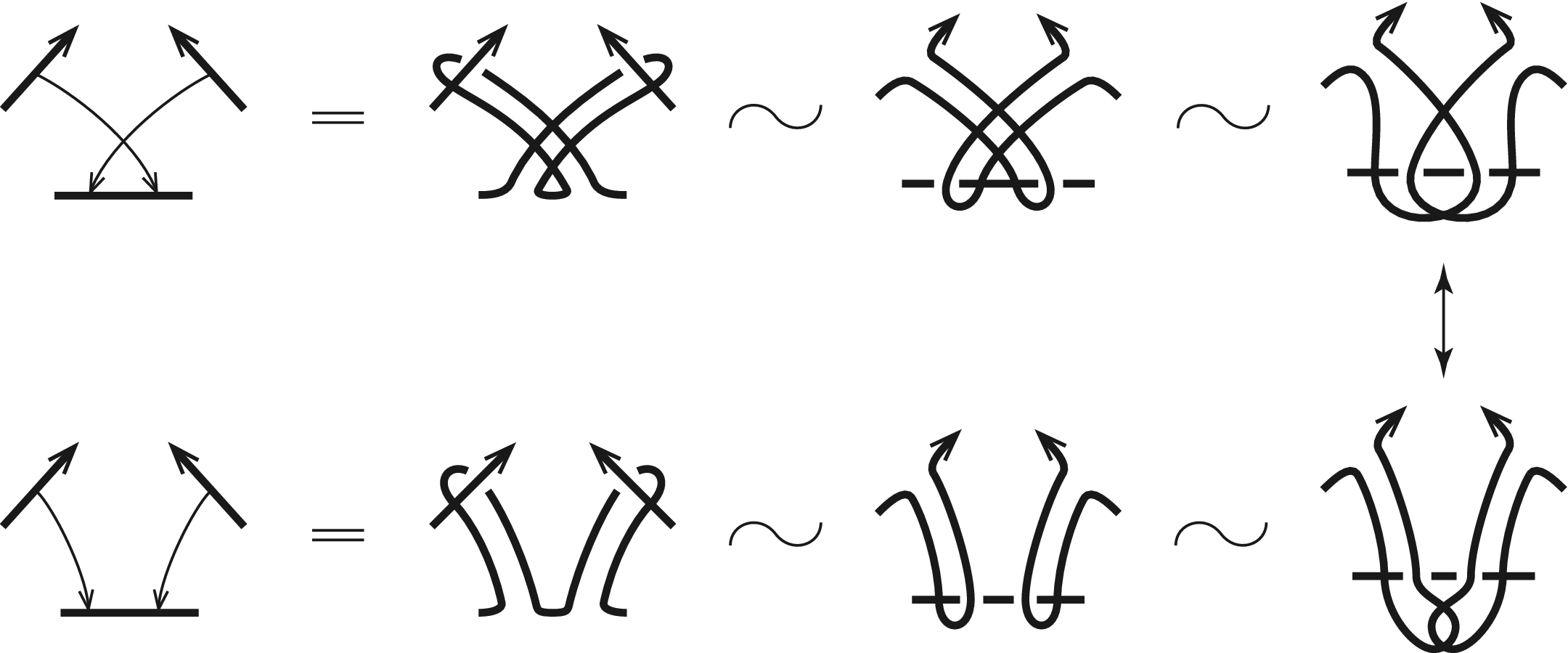}
      \put(137,101){w}
      \put(219,101){w}
      \put(137,25.5){w}
      \put(218,25.5){w}
      \put(265,57){UC}
    \end{overpic}
  \end{center}
  \caption{} 
  \label{UC-Hexch}
\end{figure}

We define the {\em $A(n)$-move} as a local move on an arrow presentation depending on the parity of $n$. 
The $A(n)$-move is illustrated  in Figure~\ref{A(n)odd} (resp. Figure~\ref{A(n)even}) when $n$ is odd (resp. even).

\begin{figure}[htbp]
  \begin{center}
    \begin{overpic}[width=10cm]{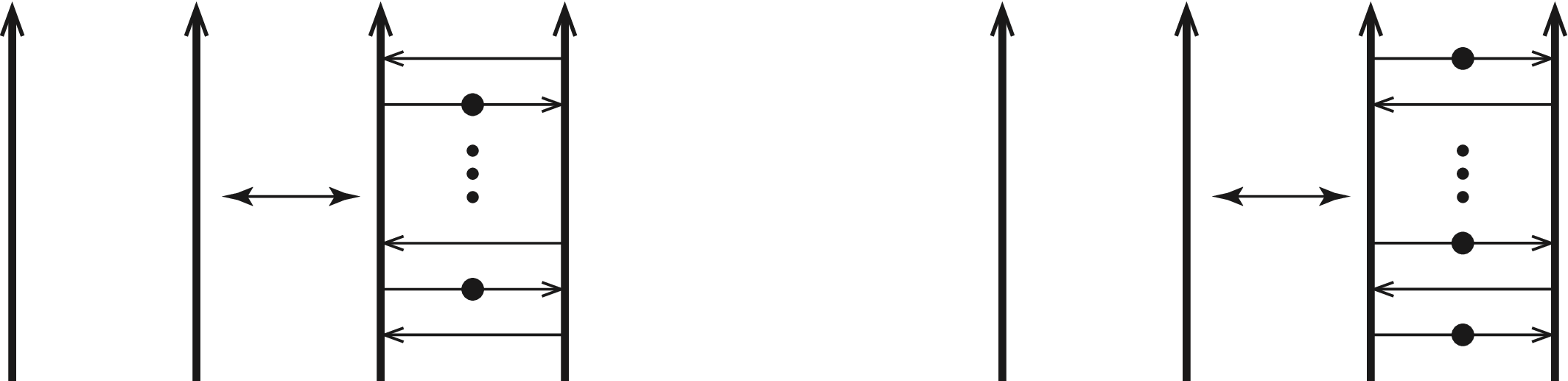}
      \put(44,39){$A(n)$}
      \put(106,5){{\footnotesize $1$}}
      \put(106,14){{\footnotesize $2$}}
      \put(106,57){{\footnotesize $n$}}
      
      \put(222,39){$A(n)$}
      \put(285.5,5){{\footnotesize $1$}}
      \put(285.5,14){{\footnotesize $2$}}
      \put(285.5,57){{\footnotesize $n$}}
    \end{overpic}
  \end{center}
  \caption{$A(n)$-move on an arrow presentation when $n$ is odd}
  \label{A(n)odd}
\end{figure}

\begin{figure}[htbp]
  \begin{center}
    \begin{overpic}[width=10cm]{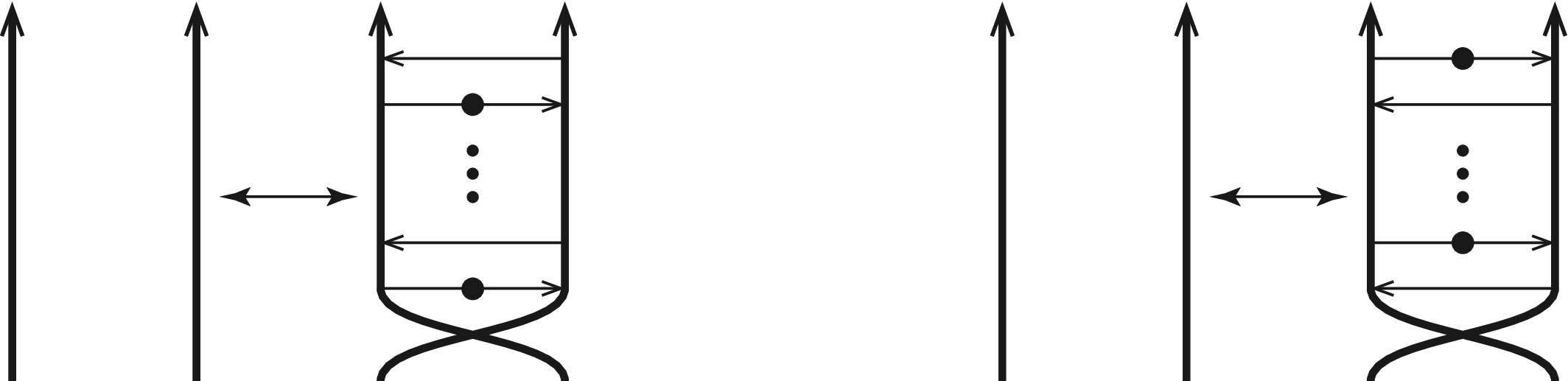}
      \put(44,39){$A(n)$}
      \put(106,14){{\footnotesize $1$}}
      \put(106,23){{\footnotesize $2$}}
      \put(106,57){{\footnotesize $n$}}
      
      \put(222,39){$A(n)$}
      \put(285.5,14){{\footnotesize $1$}}
      \put(285.5,23){{\footnotesize $2$}}
      \put(285.5,57){{\footnotesize $n$}}
    \end{overpic}
  \end{center}
  \caption{$A(n)$-move on an arrow presentation when $n$ is even}
  \label{A(n)even}
\end{figure}

\begin{lemma}
\label{lem-HvsA(n)}
Let $n$ be a positive integer. 
An ends exchange move is realized by a sequence of the $A(n)$-move and arrow moves. 
\end{lemma}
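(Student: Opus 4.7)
By Lemma~\ref{lem-ends}, it suffices to show that the H-move depicted in Figure~\ref{H} can be realized by a sequence of the $A(n)$-move and arrow moves.

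The plan is to argue that the $A(n)$-move is equivalent, modulo arrow moves, to the $A(1)$-move, and that $A(1)$ coincides with the H-move after trivial simplification. I would proceed by induction on $n$. For the base case $n=1$, direct inspection of Figure~\ref{A(n)odd} shows that the $A(1)$-configuration reduces to Figure~\ref{H} using only arrow moves. For the inductive step, I would show that an $A(n)$-move can be transformed into an $A(n-1)$-move by introducing a canceling pair of w-arrows via AR4, sliding the new tail into position using AR5, AR6, and the allowable moves AR11 and AR12, and then absorbing the redundant arrow against the $n$-th arrow of the original configuration.

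An equivalent and slightly cleaner route is to go the other way around: starting from the H-move picture, use AR4 to create $n-1$ canceling pairs of w-arrows whose heads can be placed adjacent to the two H-move heads so as to exactly match the left-hand side of the $A(n)$-move. One then applies the $A(n)$-move once, and cancels the $n-1$ auxiliary pairs by reversing the insertions. Care must be taken to introduce the auxiliary heads on the side where they slide into place without any heads exchange, to avoid circular dependence on the H-move.

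The main obstacle is bookkeeping the twist decorations: the $A(n)$-move has a different form in the odd and even cases (Figures~\ref{A(n)odd} and~\ref{A(n)even}), with twists appearing in the even case. When reducing $A(n)$ to $A(n-1)$ the parity flips, and the twists on the remaining arrows must be reconciled with those required by the new configuration; this is handled by AR8, AR10, and the allowable moves AR11 and AR12 established earlier. A related subtlety is that the orientation of the strand carrying the tails determines the sign of the twists after surgery, so every AR4-insertion must be done in the orientation dictated by the $A(n)$-picture. Once these are matched correctly, the induction closes and the H-move is realized by a sequence of an $A(n)$-move together with arrow moves.
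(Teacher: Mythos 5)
Your opening reduction to the H-move via Lemma~\ref{lem-ends} is exactly the paper's first step, but both routes you then propose for realizing the H-move contain the same fatal flaw. The claim that the $A(n)$-move is equivalent to the $A(1)$-move modulo arrow moves is false for $n\geq 2$: if an $A(1)$-move (equivalently, the crossing virtualization) could be realized by $A(n)$-moves and arrow moves, then every welded link would be $V(n)$-equivalent to the unknot, contradicting Theorem~\ref{th-odd} for odd $n\geq 3$ (any link with $\lambda_{ij}+\lambda_{ji}\not\equiv 0 \pmod n$ is not $V(n)$-trivial). The obstruction is visible at the level of arrow presentations: each $A(n)$-move changes the signed number of w-arrows joining two fixed distinct strands by $\pm n$, while arrow moves (including the AR4-type insertions and cancellations you invoke, which only create or delete inverse \emph{pairs}) preserve that signed count. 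Hence no sequence of $A(n)$-moves and arrow moves can produce an $A(n-1)$-move, and your induction cannot close at any step. The same count defeats your second route: the H-move preserves the number of w-arrows and their twists, your auxiliary insertions and their reversals preserve the signed count, yet you apply the $A(n)$-move exactly \emph{once}, shifting the count by $\pm n$. Whatever configuration that sequence ends at, it is not the right-hand side of the H-move.

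The repair is forced by this counting: the $A(n)$-move must be used an even number of times, with insertions balancing deletions, and that is precisely what the paper's Figure~\ref{pf-propUC} does. Starting from the left-hand side of the H-move, one $A(n)$-move inserts $n$ parallel w-arrows next to one of the two heads; the allowable moves AR11 and AR12 (together with further arrow moves) regroup the heads; and a second $A(n)$-move, applied to a \emph{different} collection of $n$ parallel arrows that includes one of the original two, deletes $n$ arrows again. The two survivors then have their heads in the exchanged order. In the even case an extra round of arrow moves is needed to reconcile the twists, which is the one point where your concern about twist bookkeeping is on target; but that issue is secondary to the counting obstruction above.
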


\begin{proof}
By Lemma~\ref{lem-ends}, 
it suffices to show that an H-move is realized by a sequence of the $A(n)$-move and arrow moves for any $n$. 
The upper (resp. lower) side of Figure~\ref{pf-propUC} indicates the proof for the case where $n$ is odd (resp. even),  
while the figure describes only when $n=3$ (resp. $n=2$).
\end{proof}

\begin{figure}[htbp]
  \begin{center}
    \begin{overpic}[width=12cm]{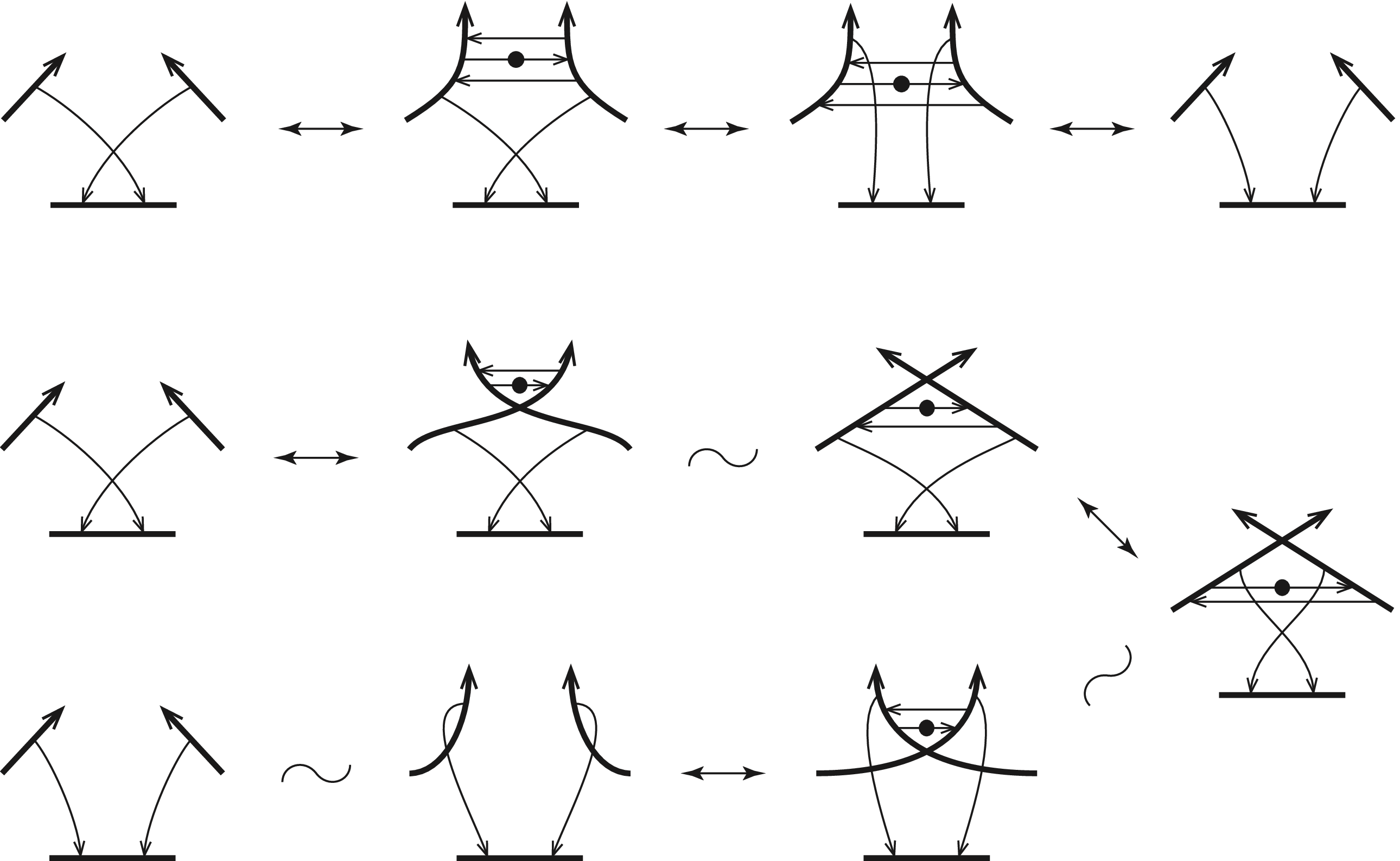}
      \put(68,184){$A(n)$}
      \put(154,184){AR11,12}
      \put(256,184){$A(n)$}
      \put(67,104){$A(n)$}
      \put(168,103){AR}
      \put(266,89){AR11,12}
      \put(271,36){AR}
      \put(166,27){$A(n)$}
      \put(69.5,26){AR}
    \end{overpic}
  \end{center}
  \caption{}
  \label{pf-propUC}
\end{figure}

\begin{lemma}\label{lem-A(n)V(n)}
An arrow presentation for a $V(n)$-move is realized by a sequence of the $A(n)$-move and arrow moves. 
Conversely, surgery along an $A(n)$-move is realized by a sequence of the $V(n)$-move and welded Reidemeister moves. 
\end{lemma}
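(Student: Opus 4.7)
The plan is to mimic the proof of Lemma~\ref{lem-UC}, which handled the pair (UC-move, heads exchange move), now applied to the pair ($V(n)$-move, $A(n)$-move). For the first assertion, I would start with both sides of a $V(n)$-move and convert each classical crossing into a virtual crossing plus a {\rm w}-arrow using the standard replacement of Figure~\ref{Aprst}. This yields two arrow presentations whose underlying diagrams consist of $n$ strands meeting only in virtual crossings, together with $n$ {\rm w}-arrows arranged in a pattern dictated by the $V(n)$-move (with tails and heads distributed across the $n$ numbered strands of Figure~\ref{GV}).

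The goal is then to rearrange one of the two resulting arrow presentations into the other using arrow moves, the allowable moves AR11 and AR12 of Figures~\ref{AR11} and~\ref{AR12}, and ends exchange moves (which by Lemma~\ref{lem-ends} are realized by H-moves and arrow moves), so that the core difference is exactly one $A(n)$-move of Figure~\ref{A(n)odd} or~\ref{A(n)even}. The parity of $n$ governs which picture applies, so the verification splits into an odd case and an even case, each treated by an explicit diagrammatic reduction analogous to Figure~\ref{pf-UC}.

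For the converse, I would perform surgery along the two sides of an $A(n)$-move and compare the resulting virtual link diagrams. Each {\rm w}-arrow in the $A(n)$-configuration produces a classical crossing together with auxiliary virtual crossings coming from its intersections with the other $n-1$ {\rm w}-arrows and with the underlying strands. Using welded Reidemeister moves (especially VR2, VR3 and R3) to cancel these auxiliary virtual crossings in the style of Figure~\ref{UC-Hexch}, the net change between the two surgered diagrams reduces to precisely one $V(n)$-move.

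The main obstacle is the case analysis: both the $V(n)$-move and the $A(n)$-move depend on the parity of $n$, and one must also keep track of the orientation of the strand carrying the tails and of the possible presence of twists on the {\rm w}-arrows (the symbols $\circ$ that arise after AR10). However, the template set by Lemma~\ref{lem-UC} together with Lemma~\ref{lem-HvsA(n)}, which shows that ends exchange moves are compatible with the $A(n)$-framework, reduces the task to a finite number of explicit local pictures; each case is then verified by a straightforward sequence of arrow moves of the same flavor as those shown in Figures~\ref{pf-AR112} and~\ref{pf-propUC}.
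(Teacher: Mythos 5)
Your proposal is correct and follows essentially the same route as the paper: both directions reduce to the observation that replacing each classical crossing of the $V(n)$-tangle by a virtual crossing plus a {\rm w}-arrow (Figure~\ref{Aprst}) turns the $V(n)$-move into the $A(n)$-move up to arrow moves, with the parity of $n$ deciding which picture applies, and the converse is checked by performing surgery and cleaning up with welded Reidemeister moves. One small caution: when you invoke ends exchange moves in the first direction, the justification must go through Lemma~\ref{lem-HvsA(n)} (as you note at the end) rather than Lemma~\ref{lem-ends} alone, since H-moves are not arrow moves and would otherwise weaken the conclusion.
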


\begin{proof}
It is not hard to see that the right-hand side move in Figure~\ref{A(n)odd} (resp. Figure~\ref{A(n)even}) is realized by a sequence of the left-hand side move in Figure~\ref{A(n)odd} (resp. Figure~\ref{A(n)even}) and arrow moves.  
See, for example, Figure~\ref{pf-A(n)odd} in the case when $n$ is odd. 
Furthermore, 
an arrow presentation for a $V(n)$-move is realized by a sequence of the left-hand side move in Figure~\ref{A(n)odd} (resp. Figure~\ref{A(n)even}) and arrow moves when $n$ is odd (resp. even). 
Conversely, it is obvious that surgery along the left-hand side move in Figure~\ref{A(n)odd} (resp. Figure~\ref{A(n)even}) is realized by a sequence of the $V(n)$-move and welded Reidemeister moves when $n$ is odd (resp. even).
Therefore, we have the conclusion. 
\end{proof}

\begin{figure}[htbp]
  \begin{center}
    \begin{overpic}[width=9cm]{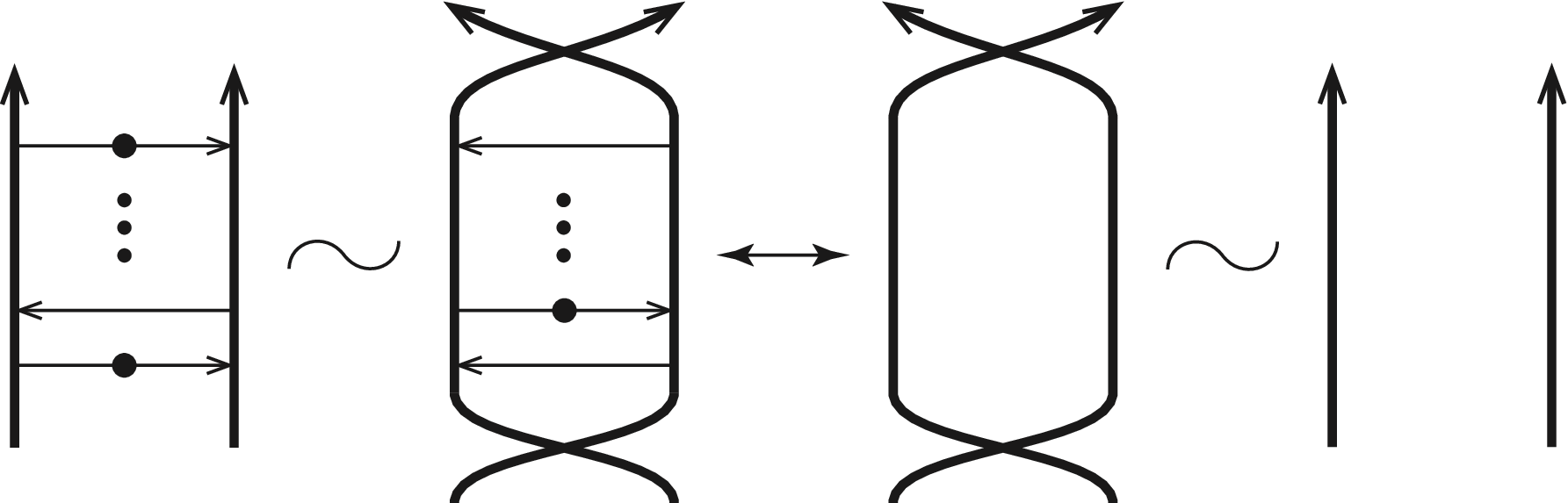}
      \put(49,45){AR}
      \put(118,45){$A(n)$}
      \put(192,45){AR}
      \put(40,20){{\footnotesize $1$}}
      \put(40,29){{\footnotesize $2$}}
      \put(40,56){{\footnotesize $n$}}
      \put(112,20){{\footnotesize $1$}}
      \put(112,29){{\footnotesize $2$}}
      \put(112,56){{\footnotesize $n$}}
    \end{overpic}
  \end{center}
  \caption{}
  \label{pf-A(n)odd}
\end{figure}

As a consequence of Lemmas~\ref{lem-UC}, \ref{lem-HvsA(n)} and \ref{lem-A(n)V(n)}, we have the following.

\begin{proposition}
\label{prop-UC}
Let $n$ be a positive integer. 
A {\rm UC}-move is realized by a sequence of the $V(n)$-move and welded Reidemeister moves. 
Hence, the $V(n)$-move is an unknotting operation for welded knots.
\end{proposition}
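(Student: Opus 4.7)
The plan is to chain together the three preceding lemmas, translating a UC-move via its arrow presentation into a sequence of $V(n)$-moves and welded Reidemeister moves, and then to invoke the classical fact that UC-moves already suffice to unknot welded knots.

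First, I would start with a UC-move on welded link diagrams and lift it to an arrow presentation. By Lemma~\ref{lem-UC}, the arrow-level effect of a UC-move is realized by a heads exchange move together with arrow moves. A heads exchange is one of the three types of ends exchange moves, so Lemma~\ref{lem-HvsA(n)} replaces it, for our fixed $n$, with a composition of $A(n)$-moves and arrow moves. Finally, applying Lemma~\ref{lem-A(n)V(n)}, the surgery corresponding to each $A(n)$-move is realized back at the diagram level by $V(n)$-moves and welded Reidemeister moves, while the residual arrow moves correspond, after surgery, to welded isotopies by the equivalence theorem for arrow presentations. Composing these three translations produces the desired realization of the UC-move.

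For the unknotting statement, recall from the introduction (see~\cite{GPV,Kanenobu,N}) that any welded knot can be trivialized by UC-moves together with welded Reidemeister moves. Replacing each UC-move in such a trivializing sequence by its realization via $V(n)$-moves and welded Reidemeister moves, as obtained in the first part, shows that $V(n)$-moves and welded Reidemeister moves alone suffice to unknot any welded knot, so $V(n)$ is an unknotting operation on welded knots for every positive integer $n$.

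I do not anticipate a significant obstacle, since the substantive bookkeeping has been carried out in Lemmas~\ref{lem-UC}, \ref{lem-HvsA(n)}, and~\ref{lem-A(n)V(n)}; the only point to be careful about is that the auxiliary arrow moves introduced in each step survive the subsequent translations, which is automatic from the fact that arrow moves induce welded isotopies on the underlying diagrams.
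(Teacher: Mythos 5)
Your proposal is correct and follows exactly the paper's intended argument: the paper derives Proposition~\ref{prop-UC} precisely as a consequence of Lemmas~\ref{lem-UC}, \ref{lem-HvsA(n)} and \ref{lem-A(n)V(n)} chained in the order you describe, with the unknotting conclusion then following from the known fact that UC-moves unknot welded knots.
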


Here, we define the {\em $A^{n}$-move} as a local move on an arrow presentation illustrated in Figure~\ref{A^n}.

\begin{figure}[htbp]
  \begin{center}
    \begin{overpic}[width=10cm]{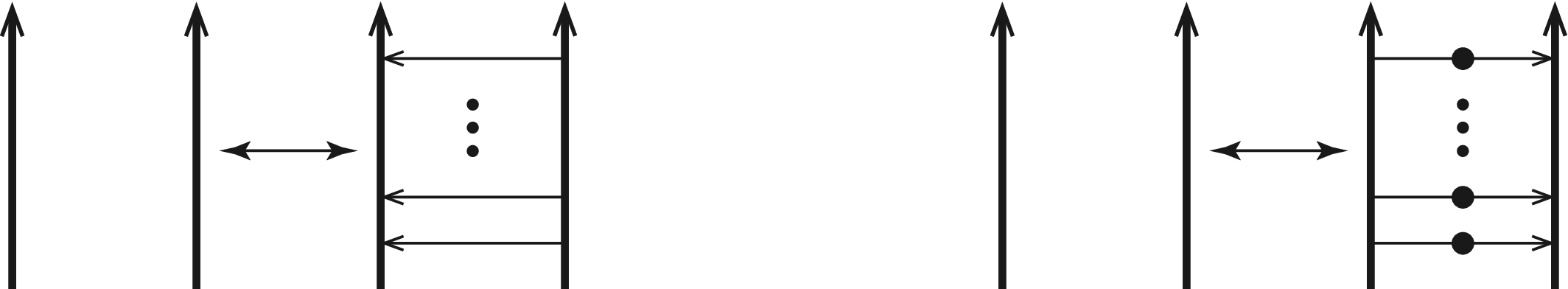}
      \put(49,29){$A^{n}$}
      \put(106,5){{\footnotesize $1$}}
      \put(106,14){{\footnotesize $2$}}
      \put(106,40){{\footnotesize $n$}}
      
      \put(227,29){$A^{n}$}
      \put(285,5){{\footnotesize $1$}}
      \put(285,14){{\footnotesize $2$}}
      \put(285,40){{\footnotesize $n$}}
    \end{overpic}
  \end{center}
  \caption{$A^{n}$-move on an arrow presentation}
  \label{A^n}
\end{figure}

\begin{lemma}
\label{lem-realizingA^{n}}
Let $n$ be an odd number. 
An $A^{n}$-move is realized by a sequence of the $A(n)$-move and arrow moves. 
\end{lemma}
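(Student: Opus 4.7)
The plan is to reduce the $A^{n}$-move to the $A(n)$-move by rearranging the cyclic/linear order of the endpoints (heads and tails) of the $n$ w-arrows along the horizontal strand, using ends exchange moves. Once this rearrangement is in place, Lemma~\ref{lem-HvsA(n)} will do the rest, since every ends exchange move is already known to be realized by a sequence of the $A(n)$-move and arrow moves.

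First, I would compare the two configurations side by side. In the $A^{n}$-move (Figure~\ref{A^n}) the $n$ w-arrows sit between two parallel strands in an alternating pattern, whereas in the $A(n)$-move for odd $n$ (Figure~\ref{A(n)odd}) the $n$ arrows appear in a ``bunched'' configuration with all heads on one strand and all tails on the other (up to twists and the ordering prescribed by the labels $1,2,\ldots,n$). The key geometric observation is that, after replacing virtual crossings by appropriate insertions, the alternating pattern of $A^{n}$ can be transformed into the bunched pattern of $A(n)$ simply by sliding individual endpoints past one another along the strand; each such slide is an ends exchange move in the sense of Section~\ref{sec-arrow}.

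Second, I would write down a concrete sequence of ends exchange moves carrying out this rearrangement. On each of the two horizontal strands I would reorder the $n$ endpoints from the alternating ordering inherited from $A^{n}$ into the grouped ordering required by $A(n)$; this uses the heads exchange move, the head-tail exchange move, and AR7, depending on whether the pair of endpoints being swapped are both heads, both tails, or one of each. After the arrow presentation has been put into the $A(n)$ form, I apply the $A(n)$-move itself, and then reverse the rearrangement on the other side to recover the right-hand side of the $A^{n}$-move. Since Lemma~\ref{lem-HvsA(n)} tells us that each ends exchange move can be replaced by $A(n)$-moves and arrow moves, the whole sequence uses only $A(n)$-moves and arrow moves, which is the desired statement.

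The main obstacle I expect is bookkeeping: verifying that after the ends exchanges the orientations of the strands carrying the tails and the positions of any twists agree with the precise picture of the $A(n)$-move shown in Figure~\ref{A(n)odd}, not some orientation-reversed or twist-shifted variant. This is where the hypothesis that $n$ is odd genuinely enters. Parity of $n$ controls how many crossings between w-arrows and strands are introduced when permuting the endpoints, and it also controls how the alternating head/tail pattern of $A^{n}$ matches the bunched pattern of $A(n)$ after the permutation; for odd $n$ these parities line up, which is precisely why the lemma is stated only for odd $n$ (and why the corresponding reduction for even $n$ would require the different picture of Figure~\ref{A(n)even} rather than Figure~\ref{A(n)odd}). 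A small picture analogous to the ones in Figures~\ref{pf-Hexch} and \ref{pf-propUC}, drawn in the case $n=3$, should make the argument transparent and then the general odd $n$ follows by the same pattern.
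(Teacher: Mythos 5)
Your reduction to ends exchange moves does not close the gap between the two configurations, because the $A^{n}$-pattern of Figure~\ref{A^n} and the $A(n)$-pattern of Figure~\ref{A(n)odd} do not differ merely in the linear order of the $2n$ endpoints along the two strands: they differ in the \emph{directions} of the w-arrows, i.e.\ in which of the two strands carries the head and which carries the tail of each arrow (and in the placement of twists). In the $A^{n}$-configuration all $n$ w-arrows point the same way, whereas this is not the case for the $A(n)$-configuration. Every ends exchange move --- AR7, the heads exchange move and the head-tail exchange move --- only permutes two \emph{adjacent endpoints lying on the same strand}; none of them detaches an endpoint from one strand and reattaches it to the other, and none of them converts a head into a tail. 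Consequently the multiset of (head/tail, strand) data of the w-arrows is invariant under all ends exchange moves, so no sequence of them (together with arrow moves) can carry the $A^{n}$-configuration to the $A(n)$-configuration, and Lemma~\ref{lem-HvsA(n)} cannot be invoked in the way you propose.

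The missing ingredient is a move that reverses a single w-arrow, interchanging its head and its tail between the two strands. This is exactly the \emph{head-tail reversal move} of Figure~\ref{HTrev}, which the paper first shows is itself realized by AR9 together with an $A(n)$-move (Figure~\ref{pf-HTrev}); this step, not the parity bookkeeping you describe, is where the hypothesis that $n$ is odd genuinely enters. Once head-tail reversals are available, one flips the appropriate w-arrows of the $A^{n}$-configuration so that it becomes the $A(n)$-configuration and then removes it by a single $A(n)$-move. If you replace your endpoint-permutation step by this reversal step, your argument becomes the paper's proof; as written, it is not a proof of the lemma.
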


\begin{proof}
We consider the {\em head-tail reversal move} illustrated  in Figure~\ref{HTrev}, 
which is realized by a sequence of the move AR9 and $A(n)$-move.
(Figure~\ref{pf-HTrev} shows that one of the head-tail reversal moves is realized by a sequence of the move AR9 and $A(n)$-move. 
The other case is similarly shown.) 
Combining the head-tail reversal moves with an $A(n)$-move, we can realize an $A^{n}$-move. 
\end{proof}

\begin{figure}[htbp]
  \begin{center}
    \begin{overpic}[width=9cm]{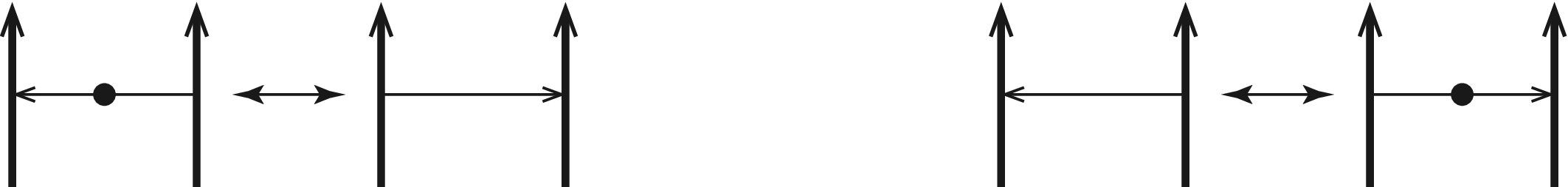}
    \end{overpic}
  \end{center}
  \caption{Head-tail reversal move}
  \label{HTrev}
\end{figure}

\begin{figure}[htbp]
  \begin{center}
    \begin{overpic}[width=8cm]{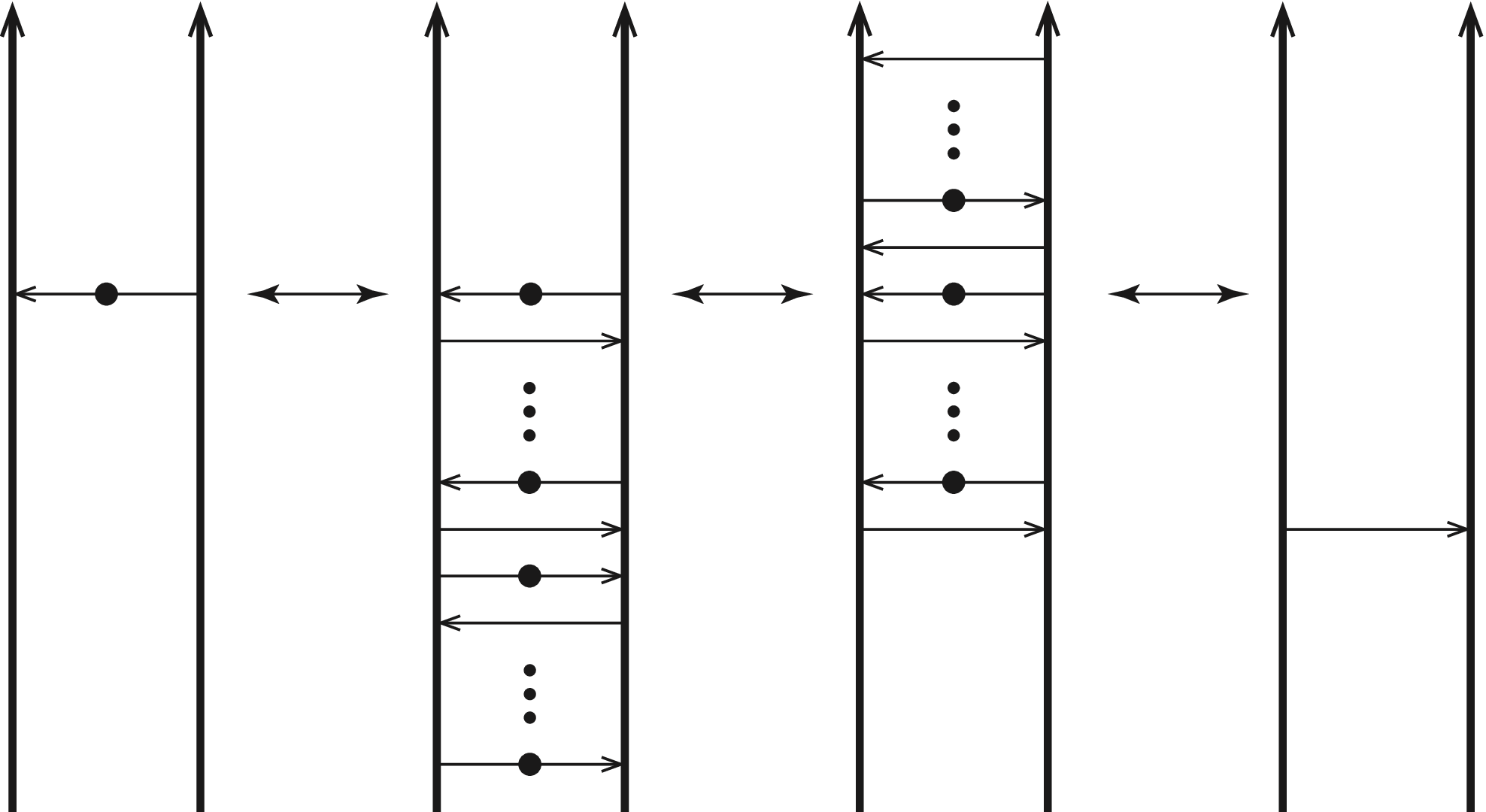}
      \put(39.5,83){AR9}
      \put(104,84){$A(n)$}
      \put(98,71){{\footnotesize $n$}}
      \put(98,48){{\footnotesize $2$}}
      \put(98,41){{\footnotesize $1$}}
      \put(98,34.5){{\footnotesize $n$}}
      \put(98,27){{\footnotesize $n-1$}}
      \put(98,5){{\footnotesize $1$}}
      \put(170.5,83){AR9}
      \put(163,113){{\footnotesize $n$}}
      \put(163,91.5){{\footnotesize $2$}}
      \put(163,84){{\footnotesize $1$}}
      \put(163,71){{\footnotesize $n$}}
      \put(163,48){{\footnotesize $2$}}
      \put(163,41){{\footnotesize $1$}}
    \end{overpic}
  \end{center}
  \caption{} 
  \label{pf-HTrev}
\end{figure}

By arguments similar to those in the proof of Lemma~\ref{lem-A(n)V(n)} we have the following.

\begin{lemma}\label{lem-A^nV^n}
An arrow presentation for a $V^{n}$-move is realized by a sequence of the $A^{n}$-move and arrow moves. 
Conversely, surgery along an $A^{n}$-move is realized by a sequence of the $V^{n}$-move and welded Reidemeister moves. 
\end{lemma}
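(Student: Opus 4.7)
The plan is to mirror the proof of Lemma~\ref{lem-A(n)V(n)} almost verbatim, since the relationship between the $A^{n}$-move and the $V^{n}$-move is structurally the same as between the $A(n)$-move and the $V(n)$-move: the $A^{n}$-move is simply the arrow-calculus avatar of the classical-crossing pattern that constitutes a $V^{n}$-move. The forward direction (producing an $A^{n}$-move from the arrow presentation of a $V^{n}$-move) and the converse (performing surgery along an $A^{n}$-move) are both established by local manipulation, with all intermediate virtual artefacts absorbed by arrow moves and welded isotopy.

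First I would verify that the two configurations drawn on the left and right of Figure~\ref{A^n} are interchangeable via arrow moves, in the same way that Figure~\ref{pf-A(n)odd} reduces the right-hand form of the $A(n)$-move to the left-hand form. This uses AR-moves together with the ends exchange moves, which by Lemma~\ref{lem-ends} are realized by the H-move and arrow moves. Consequently, it suffices to prove both directions of the statement using, say, the left-hand form of Figure~\ref{A^n}.

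For the forward direction, I would start from the left-hand side of the $V^{n}$-move drawn in Figure~\ref{GV}, namely $n$ classical crossings arranged consecutively along one strand, and replace each classical crossing with a w-arrow according to the procedure of Figure~\ref{Aprst}. This produces $n$ w-arrows whose heads (or tails, depending on which strand was overpassing) lie consecutively on the chosen strand; after a sequence of ends exchange moves (available via Lemma~\ref{lem-ends}) and elementary virtual moves, these w-arrows can be arranged in the pattern of the left-hand side of Figure~\ref{A^n}. The right-hand side of the $V^{n}$-move (only virtual crossings) has the empty arrow presentation, matching the right-hand side of the $A^{n}$-move up to VR1--VR3. For the converse, surgery along the w-arrows on the left of Figure~\ref{A^n}, read off directly from Figures~\ref{surgery} and~\ref{surgery2}, yields $n$ classical crossings with some additional virtual crossings coming from the w-arrow/w-arrow intersections; those extra virtual crossings are removed by welded Reidemeister moves, producing exactly the $V^{n}$ configuration of Figure~\ref{GV}.

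The main obstacle I expect is a bookkeeping one rather than a conceptual one: one has to check that the orientation and sign conventions of the $V^{n}$-move in Figure~\ref{GV} indeed match those induced by the surgery rule on the specific $A^{n}$-move chosen, and to treat separately the variants arising from the different possible orientations and the presence or absence of twists, in the spirit of the case analyses already carried out in Sublemmas~\ref{lem-Hexch} and~\ref{lem-HTexch}. Once the correct version of the $A^{n}$-move is isolated (which is the role of the preliminary step above), the remaining verifications are routine arrow-move manipulations, and the conclusion follows exactly as in Lemma~\ref{lem-A(n)V(n)}.
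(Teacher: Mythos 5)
Your overall strategy coincides with the paper's: the paper's entire proof of this lemma is the sentence ``by arguments similar to those in the proof of Lemma~\ref{lem-A(n)V(n)}'', and your three steps (relating the two sides of Figure~\ref{A^n}, converting the $n$ classical crossings of a $V^{n}$-move into $n$ {\rm w}-arrows via Figure~\ref{Aprst}, and reading off the surgery for the converse) are exactly that argument. There is, however, one step that as written would fail. You invoke ends exchange moves, justified by Lemma~\ref{lem-ends}, both to interchange the two sides of Figure~\ref{A^n} and to line the $n$ {\rm w}-arrows up into the $A^{n}$ pattern. Lemma~\ref{lem-ends} realizes ends exchange moves only at the cost of the H-move, which is not an arrow move; and, unlike the $A(n)$ setting where Lemma~\ref{lem-HvsA(n)} recovers ends exchange moves from $A(n)$-moves and arrow moves, no such recovery is possible here. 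Indeed, if ends exchange moves were consequences of $A^{n}$-moves and arrow moves, then combining the first half of Lemma~\ref{lem-UC} with the (unproblematic) converse direction of the present lemma would make the UC-move a consequence of $V^{n}$-moves and welded Reidemeister moves, contradicting Corollary~\ref{cor-UC} for $n\geq 2$. So your argument only establishes the statement with ``arrow moves'' replaced by ``arrow moves and H-moves'', which is strictly weaker and is precisely the distinction the paper is built on.

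The missing content is the verification that ends exchange moves are not needed. Among the ends exchange moves only AR7 is itself an arrow move; you must check that the $n$ {\rm w}-arrows produced by applying Figure~\ref{Aprst} to the $n$ consecutive, identically signed classical crossings of a $V^{n}$-move already appear with heads and tails in the correct consecutive order, so that detour moves (the VR-moves involving {\rm w}-arrows) together with AR1--AR10 alone bring them into the left-hand configuration of Figure~\ref{A^n}; likewise the two sides of Figure~\ref{A^n} must be related by arrow moves and a single $A^{n}$-move only, in the manner of Figure~\ref{pf-A(n)odd}. This is routine once drawn, but it is the one point where the $A^{n}$ case genuinely differs from the $A(n)$ case, and it is exactly the point your proposal passes over.
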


\section{Proofs of theorems}\label{sec-proofs}
In this section, we will give the proofs of Theorems~\ref{th-even}, \ref{th-odd} and \ref{th-V^n}.

\begin{proof}[Proof of Theorem~$\ref{th-even}$]
If $n$ is even then any welded link can be deformed into some welded knot by $V(n)$-moves,  
since $V(n)$-moves can change the number of components of the welded link. 
Therefore, Theorem~\ref{th-even} follows from Proposition~\ref{prop-UC}.
\end{proof}

Let $\mathbf{1}$ be the ordered oriented $\mu$-component trivial string link diagram without crossings such that all strands are oriented upwards.
For an integer $a$, let $(\mathbf{1},H_{ij}(a))$ denote the arrow presentation of Figure~\ref{sigma}, 
that is, $H_{ij}(a)$ consists of $|a|$ horizontal {\rm w}-arrows 
whose tails (resp. heads) are attached to the $i$th (resp. $j$th)
strand of~$\mathbf{1}$ $(1\leq i<j\leq\mu)$ such that each {\rm w}-arrow has exactly one twist if $a\geq 0$, 
and no twist otherwise. 
Note that, for arrow presentations $(\mathbf{1},H_{ij}(a))$ and $(\mathbf{1},H_{kl}(a'))$, 
the stacking products $(\mathbf{1},H_{ij}(a))*(\mathbf{1},H_{kl}(a'))$ 
and $(\mathbf{1},H_{kl}(a'))*(\mathbf{1},H_{ij}(a))$ are related by  ends exchange moves and arrow moves,
hence, by $A(n)$-moves and arrow moves. 
Here, the {\em stacking product $(\mathbf{1},A)*(\mathbf{1},B)$} of arrow presentations $(\mathbf{1},A)$ and $(\mathbf{1},B)$ is the arrow presentation corresponding to the diagram $\mathbf{1}_{A}*\mathbf{1}_{B}$. 
Let $\prod_{1\leq i<j\leq \mu}(\mathbf{1},H_{ij}(a_{ij}))$ denote the stacking products of $(\mathbf{1},H_{ij}(a_{ij}))$
for integers $a_{ij}$ $(1\leq i<j\leq\mu)$. 
We remark that the ordered linking numbers $\lambda_{ij}$ and $\lambda_{ji}$ of the closure of the string link diagram $\prod_{1\leq i<j\leq \mu}\mathbf{1}_{H_{ij}(a_{ij})}$ are equal to $a_{ij}$ and $0$, respectively $(1\leq i<j\leq\mu)$.

\begin{figure}[htbp]
  \begin{center}
    \begin{overpic}[width=10cm]{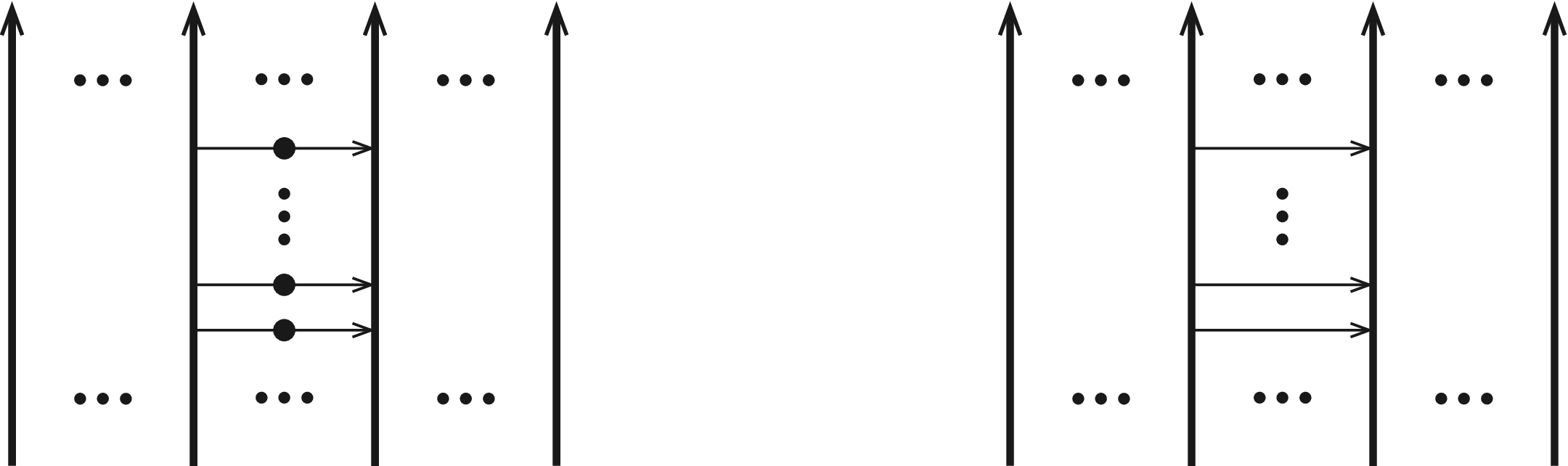}
      \put(37,-23){$(a\geq 0)$}
      \put(218,-23){$(a<0)$}
      \put(-3,-9){{\footnotesize $1$st}}
      \put(31,-9){{\footnotesize $i$th}}
      \put(63,-9){{\footnotesize $j$th}}
      \put(95,-9){{\footnotesize $\mu$th}}
      \put(71,22){{\footnotesize $1$}}
      \put(71,31){{\footnotesize $2$}}
      \put(71,56){{\footnotesize $a$}}
      
      \put(177,-9){{\footnotesize $1$st}}
      \put(212,-9){{\footnotesize $i$th}}
      \put(244,-9){{\footnotesize $j$th}}
      \put(275,-9){{\footnotesize $\mu$th}}
      \put(251,22){{\footnotesize $1$}}
      \put(251,31){{\footnotesize $2$}}
      \put(251,56){{\footnotesize $|a|$}}
    \end{overpic}
  \end{center} 
  \vspace{1.5em}
  \caption{Arrow presentation $(\mathbf{1},H_{ij}(a))$}
  \label{sigma}
\end{figure}

\begin{lemma}
\label{lem-A(n)}
Let $n$ be an odd number. 
For any $\mu$-component virtual string link diagram $D$, 
there are integers $a_{ij}$ with $0\leq a_{ij}<n$ $(1\leq i<j\leq\mu)$ such that 
an arrow presentation for $D$ and $\prod_{1\leq i<j\leq \mu}(\mathbf{1},H_{ij}(a_{ij}))$ are related by $A(n)$-moves and arrow moves.  
\end{lemma}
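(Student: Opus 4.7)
The plan is to reduce any arrow presentation $(\mathbf{1},A)$ for $D$ to the canonical form $\prod_{1 \leq i<j \leq \mu}(\mathbf{1},H_{ij}(a_{ij}))$ with $0 \le a_{ij} < n$ through a sequence of $A(n)$-moves and arrow moves, proceeding in three stages.

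First, starting from any arrow presentation $(\mathbf{1}, A)$ of $D$, I would normalize each w-arrow so that it is horizontal, carries exactly one twist, and has its tail on a strand of smaller index than its head. Horizontality is achieved by sliding endpoints along the strands of $\mathbf{1}$ using the VR1--VR3 moves for w-arrows together with the allowable moves AR11, AR12. The head-tail reversal move (Figure~\ref{HTrev}), realized by AR9 combined with an $A(n)$-move as in Figure~\ref{pf-HTrev}, converts a w-arrow going from strand $j$ to strand $i$ with $j>i$ into one going from $i$ to $j$, while simultaneously toggling its twist count. Missing or extra twists on a given w-arrow can therefore be absorbed by performing the reversal twice, and any w-arrow with both endpoints on the same strand can be eliminated by bringing its two ends next to each other along the strand and applying AR7, AR8 and AR10.

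Second, using ends exchange moves, which by Lemma~\ref{lem-HvsA(n)} are realized by $A(n)$-moves and arrow moves, I would regroup the normalized w-arrows so that, for each pair $i<j$, all w-arrows with tail on the $i$th strand and head on the $j$th strand form a single horizontal block, and the blocks are stacked in the lexicographic order of $(i,j)$. After this step the presentation has the form $\prod_{1 \leq i<j \leq \mu}(\mathbf{1},H_{ij}(b_{ij}))$ for some non-negative integers $b_{ij}$.

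Finally, the odd-$n$ form of the $A(n)$-move (Figure~\ref{A(n)odd}) adds or removes a block of $n$ parallel twisted w-arrows between two strands, so applying it repeatedly to each block reduces each $b_{ij}$ modulo $n$ to the required $a_{ij}$ with $0 \le a_{ij} < n$. The main obstacle is the first stage: one must verify that self-arrows and anomalous twist configurations can be eliminated using only the available arrow moves and $A(n)$-moves without disturbing the rest of the presentation. Once the normal form of stage two is achieved, the mod-$n$ reduction in stage three is routine.
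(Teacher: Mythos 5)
Your skeleton matches the paper's proof --- delete the self-arrows, use head-tail reversals and ends exchange moves to collect the remaining {\rm w}-arrows into blocks $H_{ij}$, then reduce each block modulo $n$ --- but two of your steps, as stated, are false. In stage one you claim that a missing or extra twist ``can be absorbed by performing the reversal twice.'' The head-tail reversal is an involution up to arrow moves: applying it twice restores both the direction and the twist, so it cannot alter the twist of an arrow whose direction you wish to keep. Indeed, no sequence of $A(n)$-moves and arrow moves can turn a twisted {\rm w}-arrow running from the $i$th to the $j$th strand into an untwisted one with the same ends, since that would make $(\mathbf{1},H_{ij}(1))$ equivalent to $(\mathbf{1},H_{ij}(-1))$ and hence identify closures with $\lambda_{ij}+\lambda_{ji}$ equal to $1$ and to $-1$, contradicting Theorem~\ref{th-odd} for $n\geq 3$. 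This is precisely why the paper's intermediate normal form $H_{ij}(a)$ must admit untwisted arrows (the case $a<0$), with the sign disposed of only at the final modulo-$n$ reduction rather than in a preliminary normalization.

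In stage three you describe the odd-$n$ $A(n)$-move as adding or removing ``$n$ parallel twisted {\rm w}-arrows,'' i.e.\ $n$ arrows all directed from the $i$th to the $j$th strand. That is the $A^{n}$-move, not the $A(n)$-move: the arrows created by an $A(n)$-move alternate in direction, as is again forced by Theorem~\ref{th-odd} (if a $V(n)$-move inserted $n$ crossings with the same overpass it would preserve $\lambda_{ij}$ itself modulo $n$, not merely the sum $\lambda_{ij}+\lambda_{ji}$, and the Hopf link would not be $V(n)$-equivalent to its reverse). Consequently an $A(n)$-move cannot directly cancel $n$ arrows of a block $H_{ij}(b_{ij})$. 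The reduction must be carried out with the $A^{n}$-move, which for odd $n$ the paper derives from the $A(n)$-move and arrow moves via head-tail reversals; this is exactly the paper's last step (``performing $A^{n}$-moves and AR9''). Both gaps are repairable with results already established in Sections~\ref{sec-arrow} and~\ref{sec-moves}, but as written your stages one and three rely on incorrect descriptions of the two moves that do the real work.
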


\begin{proof} 
Let $(\mathbf{1},\bigcup_{1\leq i,j\leq\mu}W_{ij})$ be an arrow presentation for a $\mu$-component virtual string link diagram, 
where $W_{ij}$ is a set of {\rm w}-arrows for $\mathbf{1}$ 
whose tails (resp. heads) are attached to the $i$th (resp. $j$th) strand ($1\leq i,j\leq\mu$, possibly $i=j$). 
We show that $(\mathbf{1},\bigcup_{1\leq i,j\leq\mu}W_{ij})$ can be deformed
into the desired form by $A(n)$-moves and arrow moves (including ends exchange, head-tail reversal moves and $A^{n}$-moves).
First, the ends of each {\rm w}-arrow in $W_{ii}$ $(1\leq i\leq\mu)$ can be moved into position to be removed by a single AR8. 
Hence, all {\rm w}-arrows in $W_{ii}$ are removed for any $i$.  
Next, $(\mathbf{1},\bigcup_{1\leq i\neq j\leq\mu}W_{ij})$ can be deformed into $\prod_{1\leq i<j\leq\mu}(\mathbf{1},H_{ij}(a_{ij}))$ for some integers $a_{ij}$ 
by combining head-tail reversal, ends exchange moves and AR9.
Finally, we obtain the desired form by performing $A^{n}$-moves and AR9. \end{proof}

\begin{proof}[Proof of Theorem~$\ref{th-odd}$]
It suffices to show the ``if'' part. 
Let $D$ and $D'$ be virtual link diagrams of $L$ and $L'$, respectively. 
For any virtual link diagram, there exists a virtual string link diagram 
whose closure is welded isotopic to the virtual link diagram. 
Hence, by Lemma~\ref{lem-A(n)}, two arrow presentations $(T,A)$ for $D$ and $(T',A')$ for $D'$ can be related to 
the closures of $\prod_{1\leq i<j\leq \mu}(\mathbf{1},H_{ij}(a_{ij}))$ and $\prod_{1\leq i<j\leq \mu}(\mathbf{1},H_{ij}(a'_{ij}))$, respectively, for some non-negative integers $a_{ij},a'_{ij}$ $(<n)$, by $A(n)$-moves and arrow moves. 
Then, for any $i,j$ $(1\leq i<j\leq\mu)$, we have
\[a_{ij}\equiv\lambda_{ij}(D)+\lambda_{ji}(D)\equiv\lambda_{ij}(D')+\lambda_{ji}(D')\equiv a'_{ij}
\pmod{n}.\] 
Since $0\leq a_{ij},a'_{ij}<n$ it follows that $a_{ij}=a'_{ij}$. 
Therefore, $(T, A)$ and $(T', A')$ are related by $A(n)$-moves and arrow moves.
Consequently, $D(=T_{A})$ and $D'(=T'_{A'})$ are related by $V(n)$-moves and welded Reidemeister moves. 
\end{proof}

For an integer $b$, let $(\mathbf{1},\overline{H}_{ij}(b))$ denote the arrow presentation of Figure~\ref{sigma2}, 
that is, $\overline{H}_{ij}(b)$ consists of 
$|b|$ horizontal {\rm w}-arrows 
whose heads (resp. tails) are attached to the $i$th (resp. $j$th)
strand of $\mathbf{1}$ $(1\leq i<j\leq\mu)$ such that each {\rm w}-arrow 
has no twist if $b\geq 0$, and exactly one twist otherwise.
We remark that, for integers $a_{ij}$ and $b_{ij}$, 
 the numbers $\lambda_{ij}$ and $\lambda_{ji}$ of the closure of the string link diagram $\prod_{1\leq i<j\leq\mu}(\mathbf{1}_{H_{ij}(a_{ij})}*\mathbf{1}_{\overline{H}_{ij}(b_{ij})})$ are equal to $a_{ij}$ and $b_{ij}$, respectively $(1\leq i<j\leq\mu)$.

\begin{figure}[htbp]
  \begin{center}
    \begin{overpic}[width=10cm]{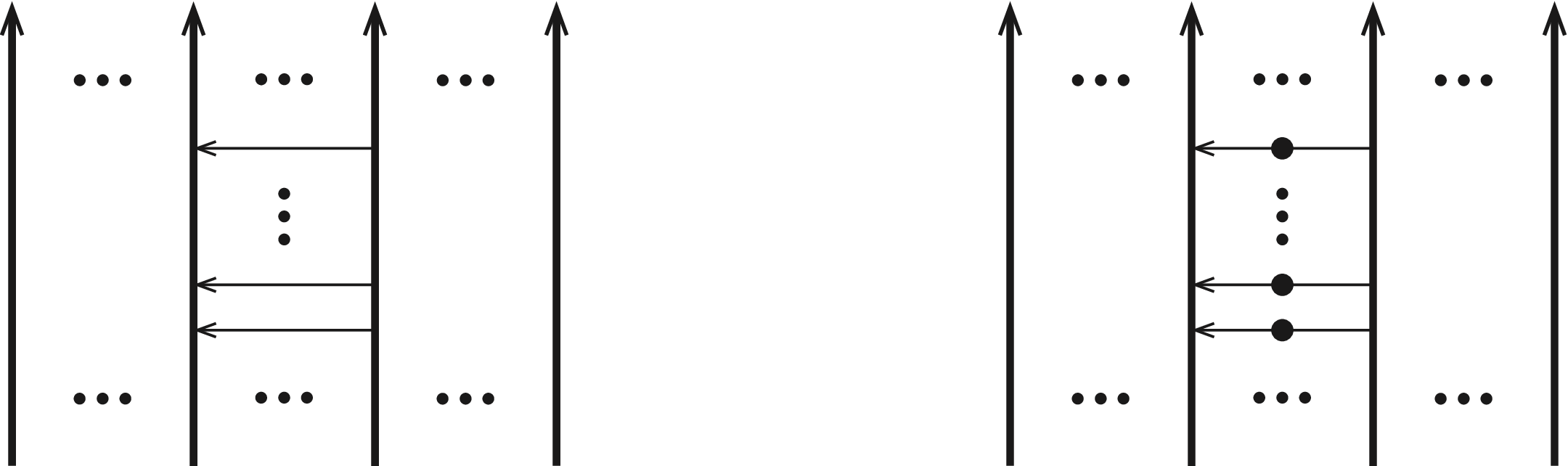}
      \put(38,-23){$(b\geq 0)$}
      \put(219,-23){$(b<0)$}
      \put(-3,-9){{\footnotesize $1$st}}
      \put(31,-9){{\footnotesize $i$th}}
      \put(63,-9){{\footnotesize $j$th}}
      \put(95,-9){{\footnotesize $\mu$th}}
      \put(71,22){{\footnotesize $1$}}
      \put(71,31){{\footnotesize $2$}}
      \put(71,56){{\footnotesize $b$}}
      
      \put(177,-9){{\footnotesize $1$st}}
      \put(212,-9){{\footnotesize $i$th}}
      \put(244,-9){{\footnotesize $j$th}}
      \put(275,-9){{\footnotesize $\mu$th}}
      \put(251,22){{\footnotesize $1$}}
      \put(251,31){{\footnotesize $2$}}
      \put(251,56){{\footnotesize $|b|$}}
    \end{overpic}
  \end{center} 
  \vspace{1.5em}
  \caption{Arrow presentation $(\mathbf{1},\overline{H}_{ij}(b))$}
  \label{sigma2}
\end{figure}

\begin{lemma}
\label{lem-A^n}
Let $n$ be a positive integer.  
For any $\mu$-component virtual string link diagram $D$, 
there are integers $a_{ij},b_{ij}$ with $0\leq a_{ij},b_{ij}<n$ $(1\leq i<j\leq\mu)$ such that 
an arrow presentation for $D$ and $\prod_{1\leq i<j\leq \mu}((\mathbf{1},H_{ij}(a_{ij}))*(\mathbf{1},\overline{H}_{ij}(b_{ij})))$ are related by $A^{n}$-moves, ends exchange moves and arrow moves. 
\end{lemma}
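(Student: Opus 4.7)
The plan is to mimic the proof of Lemma~\ref{lem-A(n)}, noting the one essential difference: in that proof, the head-tail reversal move was available (being realized by AR9 combined with an $A(n)$-move), permitting one to reverse arrow directions and hence reduce everything to arrows going from strand $i$ to strand $j$ with $i<j$. In our setting we do not have $A(n)$-moves, so head-tail reversal is unavailable, and both arrow directions between each pair of strands must be retained. This is precisely why the canonical form here features both $H_{ij}(a_{ij})$ and $\overline{H}_{ij}(b_{ij})$.

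Let $(\mathbf{1},\bigcup_{1\le i,j\le\mu}W_{ij})$ be an arrow presentation for $D$, where $W_{ij}$ collects the w-arrows with tail on the $i$th strand and head on the $j$th strand. First, for each $i$, we bring the endpoints of each arrow of $W_{ii}$ adjacent on the $i$th strand by ends exchange moves and then remove the arrow by AR8, so we may assume $W_{ii}=\emptyset$. Using AR8 repeatedly, we reduce the number of twists on every remaining arrow modulo $2$. This leaves, for each pair $1\le i<j\le\mu$, arrows of four types: direction $i\to j$ or $j\to i$, each either twisted or untwisted. Using ends exchange moves (and AR9 to handle any virtual crossings between w-arrows and strands that appear) we cluster arrows of the same type together, and cluster the arrows associated with different pairs $\{i,j\}$ according to the desired stacking order.

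The crucial step is to convert the two non-canonical types, namely $(i\to j,\text{untwisted})$ and $(j\to i,\text{twisted})$, into the canonical types $(i\to j,\text{twisted})$ and $(j\to i,\text{untwisted})$. For this I would establish a preliminary cancellation claim: two parallel consecutive arrows between strands $i$ and $j$ with the same direction but opposite twist parity can be removed by a sequence of arrow moves. Indeed, the corresponding surgeries yield two adjacent classical crossings of opposite sign with the same strand passing over, which is eliminated by an R2-move; by the arrow-calculus equivalence theorem \cite[Theorem~4.5]{MY} this welded-isotopy-level cancellation is realized by arrow moves alone. Combining this cancellation with an inverse $A^{n}$-move accomplishes the conversion: to replace a single $(i\to j,\text{untwisted})$ arrow, introduce $n$ arrows of type $(i\to j,\text{twisted})$ by an inverse $A^{n}$-move, cancel one against the non-canonical arrow, and $(n-1)$ arrows of the canonical type remain. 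The symmetric procedure handles $(j\to i,\text{twisted})$ arrows.

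Once all arrows are of canonical type, $A^{n}$-moves reduce the count of each canonical type modulo $n$ to lie in $[0,n)$, giving the required $a_{ij}$ and $b_{ij}$; a final pass of ends exchange moves arranges the arrows into the stacking product $\prod_{1\le i<j\le\mu}((\mathbf{1},H_{ij}(a_{ij}))*(\mathbf{1},\overline{H}_{ij}(b_{ij})))$. The main obstacle I foresee is producing an explicit arrow-move sequence for the cancellation of two parallel arrows of opposite twist parity; while \cite[Theorem~4.5]{MY} guarantees its existence, writing it down cleanly requires careful use of the moves AR4, AR5, AR8 and AR10 that handle twists and opposite-direction cancellation.
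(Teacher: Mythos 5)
Your proposal is correct and follows essentially the same route as the paper, which itself only sketches this proof by pointing to the argument for Lemma~\ref{lem-A(n)} and noting that the head-tail reversal move is unavailable, so both arrow directions must be kept via $H_{ij}$ and $\overline{H}_{ij}$ --- exactly the point you identify and then implement. The one step you flag as an obstacle, cancelling two parallel arrows of opposite twist parity, is in fact directly among the arrow moves of \cite{MY} (it is the ``inverse'' cancellation, the move with the $\circ$ decoration), so the appeal to \cite[Theorem~4.5]{MY} is sound but unnecessary.
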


The proof of the lemma above can be done by a similar way to the proof of Lemma~\ref{lem-A(n)}. 
Note that we are not permitted to use the head-tail reversal move.
This is the reason why we need not only $H_{ij}(a)$ but also $\overline{H}_{ij}(b)$.

\begin{proof}[Proof of Theorem~$\ref{th-V^n}$]
It suffices to show the ``if'' part. 
Let $D$ and $D'$ be virtual link diagrams of $L$ and $L'$, respectively.  
By Lemma~\ref{lem-A^n}, two arrow presentations $(T,A)$ for $D$ and $(T',A')$ for $D'$ can be related to 
the closures of $\prod_{1\leq i<j\leq \mu}((\mathbf{1},H_{ij}(a_{ij}))*(\mathbf{1},\overline{H}_{ij}(b_{ij})))$ and $\prod_{1\leq i<j\leq \mu}((\mathbf{1},H_{ij}(a'_{ij}))*(\mathbf{1},\overline{H}_{ij}(b'_{ij})))$, respectively, for some non-negative integers $a_{ij},b_{ji},a'_{ij},b'_{ji}$ $(<n)$ by $A^{n}$-moves, ends exchange moves and arrow moves.  
Then, for any $i,j$ $(1\leq i<j\leq\mu)$, we have
\[a_{ij}\equiv\lambda_{ij}(D)\equiv\lambda_{ij}(D')\equiv a'_{ij}\pmod{n}\] 
and 
\[b_{ij}\equiv\lambda_{ji}(D)\equiv\lambda_{ji}(D')\equiv b'_{ij}
\pmod{n}.\] 
Since $0\leq a_{ij},b_{ij},a'_{ij}b'_{ij}<n$ it follows that $a_{ij}=a'_{ij}$ and $b_{ij}=b'_{ij}$.  
Therefore, $(T, A)$ and $(T', A')$ are related by $A^n$-moves, ends exchange moves and arrow moves.
Lemmas~\ref{lem-ends},~\ref{lem-UC} and~\ref{lem-A^nV^n} imply that 
 $D(=T_{A})$ and $D'(=T'_{A'})$ are related by $V^{n}$-moves, UC-moves and welded Reidemeister moves.  
\end{proof}

\section{$V^{n}$-moves and UC-moves}\label{sec-UC}
As mentioned in Section 1, 
the $V^{n}$-move is not an unknotting operation except for $n=1$.
To prove this, we use the Alexander polynomials 
which are obtained from the group of a welded link by using the Fox free derivative. 
Here, the {\em group} of a virtual link diagram is known to be a welded link invariant~\cite[Section 4]{Kauffman}, 
and hence 
(the elementary ideals in the sense of~\cite{CF} and) 
the Alexander polynomials are naturally extended to welded link invariants. 
By a similar way to the proof of Theorem~1 in~\cite{Kinoshita}, 
we can show the following.

\begin{proposition}
\label{prop-Alex}
Let $n$ be a positive integer. 
If two welded links $L$ and $L'$ are $V^{n}$-equivalent, 
then for a non-negative integer $k$ and 
for the $k$th elementary ideals $E^{k}_{L}(t)$ and $E^{k}_{L'}(t)$ of $L$ and $L'$, respectively, we have 
\[
E^{k}_{L}(t)\equiv E^{k}_{L'}(t)\mod{I(1-t^{n})},  
\]
where $I(1-t^{n})$ is the ideal generated by $1-t^{n}$ in $\mathbb{Z}[t^{\pm 1}]$. 
In particular, 
for the $($$1$-variable$)$ 
$k$th Alexander polynomials 
$\Delta_{L}^{k}(t)$ and $\Delta_{L'}^{k}(t)$ of $L$ and $L'$, respectively, we have 
\[
\Delta_{L}^{k}(t)\equiv\e t^{r}\Delta_{L'}^{k}(t)\mod{I(1-t^{n})} 
\]
for some $\e\in\{\pm1\}$ and $r\in\mathbb{Z}$.  
\end{proposition}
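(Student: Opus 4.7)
The plan is to adapt Kinoshita's argument~\cite{Kinoshita} to the welded setting by exhibiting Alexander matrices $M_L, M_{L'}$ satisfying $M_L \equiv M_{L'} \pmod{1-t^n}$, and then extracting the ideal and polynomial congruences from this matrix congruence. Since welded Reidemeister moves preserve all elementary ideals and all Alexander polynomials, and $V^n$-equivalence is generated by single $V^n$-moves together with welded Reidemeister moves, it suffices to handle a single $V^n$-move relating diagrams $D, D'$ that agree outside a small disk $\Delta$.

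Inside $\Delta$, the $V^n$-tangle of $D$ is organized so that all $n$ classical crossings share a common over-arc $x$ and lie consecutively along a common under-strand with successive arcs $y_0, y_1, \dots, y_n$, yielding Wirtinger relations $y_i = x^{\epsilon} y_{i-1} x^{-\epsilon}$ with $\epsilon \in \{\pm 1\}$; in $D'$ these crossings are virtual, so $y_0 = y_1 = \cdots = y_n$. Apply Tietze transformations inside $\Delta$ to eliminate the intermediate generators $y_1, \dots, y_{n-1}$: the two presentations then share the same generators (namely $x$, $y_0$, $y_n$, and all arcs outside $\Delta$) and the same relations coming from outside $\Delta$, differing only in a single local relation
\[
\tilde{R}: y_n = x^{n\epsilon} y_0 x^{-n\epsilon} \ \text{for } G(L), \qquad \tilde{R}': y_n = y_0 \ \text{for } G(L').
\]
The key Fox-calculus identity
\[
\frac{\partial}{\partial x}\bigl(x^{n\epsilon} y_0 x^{-n\epsilon}\bigr) = \bigl(1 - x^{n\epsilon} y_0 x^{-n\epsilon}\bigr)\bigl(1 + x^{\epsilon} + \cdots + x^{(n-1)\epsilon}\bigr)
\]
abelianizes to $\pm(1-t^n)$, and since $\phi(x^{n\epsilon} y_0 x^{-n\epsilon}) = t$, every entry of the $\tilde{R}$-row of $M_L$ is congruent modulo $(1-t^n)$ to the corresponding entry of the $\tilde{R}'$-row of $M_{L'}$. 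All other rows coincide, giving $M_L \equiv M_{L'} \pmod{1-t^n}$ as matrices over $\mathbb{Z}[t^{\pm 1}]$.

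Because minors of congruent matrices are congruent, every $(m-k)\times(m-k)$ minor of $M_L$ is congruent modulo $(1-t^n)$ to the corresponding minor of $M_{L'}$, yielding $E^k_L(t) \equiv E^k_{L'}(t) \pmod{I(1-t^n)}$. The Alexander-polynomial statement follows via a standard GCD argument in $\mathbb{Z}[t^{\pm 1}]$, accounting for the unit ambiguity $\pm t^r$ in the definition of $\Delta^k$. The main obstacle is the Tietze reduction inside $\Delta$: one must organize the Wirtinger presentation so that the $n$ Wirtinger relations of the $V^n$-tangle can be packaged into the single commutator-type relation $\tilde{R}$ above, which relies on the precise combinatorial structure of the $V^n$-move (in particular, that all $n$ classical crossings share a common over-strand and lie along a common under-strand). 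Once this local setup is correctly in place, the remaining divisibility computations are routine.
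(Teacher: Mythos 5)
Your proposal is correct and takes essentially the same route as the paper: reduce to a single $V^{n}$-move, present the two groups so that they differ in exactly one relation (the trivial relation $x_{1}x_{2}^{-1}$ versus the $n$-fold conjugation $x_{1}x_{3}^{n}x_{2}^{-1}x_{3}^{-n}$), and use the Fox derivative computation $\partial(x^{n})/\partial x\mapsto 1+t+\cdots+t^{n-1}$ to see that the corresponding rows of the Alexander matrices agree modulo $I(1-t^{n})$. The only cosmetic difference is that the paper equalizes the sizes of the two matrices by inserting an R1 kink into the other diagram, whereas you do it by Tietze transformations.
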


\begin{proof}
Let $D$ and $ D'$ be virtual link diagrams of $L$ and $L'$, respectively. 
It suffices to show that if $D$ and $D'$ are related by a single $V^{n}$-move then for their properly chosen Alexander matrices $A_{D}(t)$ and $A_{D'}(t)$, 
\[A_{D}(t)\equiv A_{D'}(t)\mod{I(1-t^{n})}.\]

Suppose that $D'$ is obtained from $D$ by a single R1 and a single $V^{n}$-move, 
and put labels $x_{1},x_{2}$ and $x_{3}$ on arcs of $D$ and $D'$ as illustrated  in Figure~\ref{pf-Alex} and labels $x_{4},\ldots,x_{l}$ on the other arcs outside the figure.

\begin{figure}[htbp]
  \begin{center}
    \begin{overpic}[width=11cm]{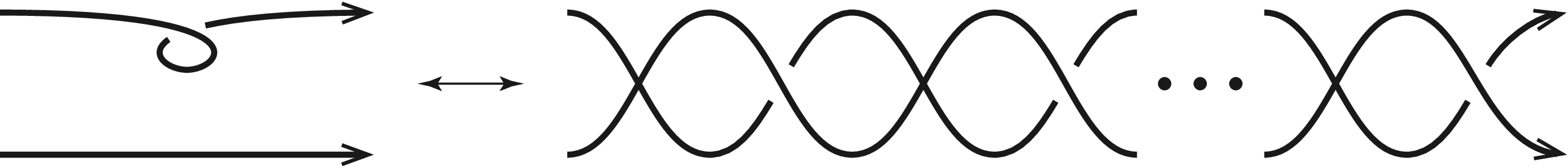}
      \put(33,-18){$D$}
      \put(200,-18){$D'$}
      \put(81,20.5){R1,$V^{n}$}
      \put(154,2){{\footnotesize $1$}}
      \put(210.5,2){{\footnotesize $2$}}
      \put(293,2){{\footnotesize $n$}}
      \put(-5,35){$x_{1}$}
      \put(72,35){$x_{2}$}
      \put(-5,-5.5){$x_{3}$}
      \put(72,-5.5){$x_{3}$}
      \put(107,35){$x_{1}$}
      \put(107,-5.5){$x_{3}$}
      \put(311,35){$x_{2}$}
      \put(311,-5.5){$x_{3}$}
    \end{overpic}
  \end{center}
  \vspace{1em}
  \caption{}
  \label{pf-Alex}
\end{figure}

Then, we obtain group presentations of the groups $G(D)$ and $G(D')$ of $D$ and $D'$, respectively, as follows: 
\[
\begin{array}{ll}
G(D)=\langle x_{1},x_{2},x_{3},x_{4},\ldots,x_{l}\mid x_{1}x_{2}^{-1},\{r_{i}\}\rangle,  \\
G(D')=\langle x_{1},x_{2},x_{3},x_{4},\ldots,x_{l}\mid x_{1}x_{3}^{n}x_{2}^{-1}x_{3}^{-n},\{r_{i}\}\rangle, 
\end{array}
\]
where $\{r_{i}\}$ is the set of relations corresponding to the other crossings. 
Using the Fox free derivative~\cite{CF}, we have the Alexander matrices $A_{D}(t)$ and $A_{D'}(t)$ of $D$ and $D'$, respectively, as follows: 
\[
A_{D}(t)=\begin{pmatrix}
1 &-1 &0 &0 &\cdots &0 \\ \cdashline{1-6}[3pt/3pt]
\multicolumn{6}{c}{} \\
\multicolumn{6}{c:}{\raisebox{0pt}[0pt][0pt]{$\mathfrak{a}\gamma\left(\cfrac{r_{i}}{x_{j}}\right)$}} \\
\multicolumn{6}{c}{} \\
\end{pmatrix}, 
A_{D'}(t)=\begin{pmatrix}
1 &-t^{n} &t^{n}-1 &0 &\cdots &0 \\ \cdashline{1-6}[3pt/3pt] 
\multicolumn{6}{c}{} \\
\multicolumn{6}{c:}{\raisebox{0pt}[0pt][0pt]{$\mathfrak{a}\gamma\left(\cfrac{r_{i}}{x_{j}}\right)$}} \\
\multicolumn{6}{c}{} \\
\end{pmatrix}.
\]
Therefore, $A_{D}(t)-A_{D'}(t)$ is a zero matrix modulo ${I(1-t^{n})}$.  
\end{proof}

Now we have the following.

\begin{proposition}
\label{prop-trefoil}
The $V^{n}$-move is not an unknotting operation on welded knots for $n\geq 2$.
\end{proposition}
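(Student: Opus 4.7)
The plan is to exhibit a classical welded knot that is not $V^{n}$-equivalent to the unknot for any $n\geq 2$, using Proposition~\ref{prop-Alex} as the obstruction. The natural candidate is the trefoil knot $T$, regarded as a welded knot via its classical diagram.

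First I would recall (or briefly verify via the Fox calculus applied to the standard three-crossing diagram) that the Alexander polynomial of $T$ agrees with the classical one,
\[
\Delta_{T}(t) = 1 - t + t^{2},
\]
while that of the unknot $U$ is $\Delta_{U}(t) = 1$. If $T$ were $V^{n}$-equivalent to $U$, then Proposition~\ref{prop-Alex} would furnish $\varepsilon \in \{\pm 1\}$ and $r \in \mathbb{Z}$ with
\[
1 - t + t^{2} \;\equiv\; \varepsilon\, t^{r} \pmod{I(1 - t^{n})}.
\]

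Next I would rule this congruence out by working in the quotient $R_{n} := \mathbb{Z}[t^{\pm 1}]/(1 - t^{n}) \cong \mathbb{Z}[t]/(t^{n} - 1)$, which has $\{1, t, \ldots, t^{n-1}\}$ as a free $\mathbb{Z}$-basis. For $n \geq 3$, the image of $1 - t + t^{2}$ in $R_{n}$ has coefficient vector $(1, -1, 1, 0, \ldots, 0)$, whereas the image of $\varepsilon t^{r}$ (after reducing $r$ modulo $n$) is $\pm 1$ times a single basis vector, so no choice of $(\varepsilon, r)$ works. For $n = 2$, the left side reduces to $2 - t$ with vector $(2, -1)$, while $\varepsilon t^{r}$ yields $(\pm 1, 0)$ or $(0, \pm 1)$; again no equality.

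Hence $T$ and $U$ are not $V^{n}$-equivalent for any $n \geq 2$, so the $V^{n}$-move cannot be an unknotting operation on welded knots. The only real thing to verify carefully is the elementary-ideal normalization in Proposition~\ref{prop-Alex} when applied to a knot (which recovers $\Delta_{T}(t)$ up to units $\pm t^{r}$) together with the familiar Alexander polynomial computation for the trefoil; neither forms a substantial obstacle.
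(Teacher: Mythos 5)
Your proposal is correct and follows essentially the same route as the paper: apply Proposition~\ref{prop-Alex} to the trefoil versus the unknot and check that $1-t+t^{2}-\e t^{r}\notin I(1-t^{n})$ for $n\geq 2$. The only (cosmetic) difference is that you verify non-membership by comparing coefficient vectors in the free $\mathbb{Z}$-basis of $\mathbb{Z}[t^{\pm1}]/(1-t^{n})$, whereas the paper uses the additive map $f_{n}$ summing the coefficients in degrees $\equiv 0,2\pmod{n}$; both checks are immediate and equivalent.
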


\begin{proof}
We show that the trefoil knot is not $V^{n}$-equivalent to the unknot for $n\geq 2$.
The first Alexander polynomial 
of the trefoil knot is $1-t+t^{2}$, and that of the unknot is $1$. 
It suffices to show that $1-t+t^{2}-\e t^{r}\not\in I(1-t^{n})$ for any $n\geq 2$ by Proposition~\ref{prop-Alex} $(\e\in\{\pm 1\},r\in\mathbb{Z})$.

Suppose that $n\geq 2$. 
We define a map
$f_{n}:\mathbb{Z}[t^{\pm 1}]\rightarrow \mathbb{Z}$ by $f_{n}(\sum_{i}a_{i}t^{i})=\sum_{i\equiv 0,2\pmod{n}}a_{i}$, where $a_{i}\in\mathbb{Z}$. 
Since 
$f_{n}(\de t^{s}(1-t^{n}))=0$ $(\de\in\{\pm 1\}, s\in\mathbb{Z})$ it follows that 
$f(b)=0$ for any element $b\in I(1-t^{n})$. 
On the other hand, $f_{n}(1-t+t^{2}-\e t^{r})$ is not equal to $0$ for any $n\geq 2$. 
This completes the proof. 
\end{proof}

The proposition above immediately implies the following corollary.

\begin{corollary}
\label{cor-UC}
A {\rm UC}-move is realized by a sequence of the $V^{n}$-move and welded Reidemeister moves if and only if $n=1$.
\end{corollary}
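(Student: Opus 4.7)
The plan is to deduce both directions directly from results already established in the paper, with essentially no new calculation.

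For the ``if'' direction, I would observe that the $V^{1}$-move coincides with the crossing virtualization $V$, which is in turn equivalent to the $V(1)$-move (as noted in the introduction). Applying Proposition~\ref{prop-UC} in the special case $n=1$ then immediately yields that a UC-move is realized by a sequence of $V^{1}$-moves and welded Reidemeister moves.

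For the ``only if'' direction, I would argue by contradiction. Suppose that for some $n\geq 2$, a UC-move is realized by a sequence of $V^{n}$-moves and welded Reidemeister moves. Recall from the introduction that any virtual knot diagram can be unknotted by UC-moves and welded Reidemeister moves, so the UC-move is an unknotting operation on welded knots. Chaining these two facts together, every welded knot could then be trivialized using only $V^{n}$-moves and welded Reidemeister moves; that is, the $V^{n}$-move would be an unknotting operation on welded knots. This directly contradicts Proposition~\ref{prop-trefoil}, which asserts that the $V^{n}$-move is not an unknotting operation for $n\geq 2$ (witnessed by the trefoil, via the Alexander polynomial obstruction in Proposition~\ref{prop-Alex}).

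There is no serious obstacle here: both halves are short logical combinations of statements already proved. The only point requiring care is making sure the ``if'' direction invokes the correct identification $V^{1}=V(1)=V$, so that Proposition~\ref{prop-UC} can legitimately be applied with $n=1$; everything else is formal.
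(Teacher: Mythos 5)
Your proposal is correct and matches the paper's argument: the paper derives the corollary immediately from Proposition~\ref{prop-trefoil} (your ``only if'' direction, using that the UC-move is an unknotting operation), while the ``if'' direction is just the identification of $V^{1}$ with the crossing virtualization together with Proposition~\ref{prop-UC}. Nothing further is needed.
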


\section{Unoriented $V^{n}$-moves and associated core groups}
In this section, we will discuss relations among {\em unoriented} $V^{n}$-moves, associated core groups and the multiplexing of crossings.

For an unoriented classical link diagram $D$, 
the {\em associated core group $\Pi^{(2)}_{D}$} is defined as follows. 
Each arc of $D$ yields a generator, and each classical crossing gives a relation $yx^{-1}yz^{-1}$, where $x$ and $z$ correspond to the underpasses and $y$ corresponds to the overpass at the crossing. 
This group $\Pi^{(2)}_{D}$ is known as a classical link invariant~\cite{J,Kelly,FR,W}.

\begin{remark}
\label{rem-MWada}
Let $L$ be an unoriented classical link in the $3$-sphere and $D$ a classical diagram of $L$. 
M.~Wada~\cite{W} proved that $\Pi_{D}^{(2)}$ is isomorphic to the free product of 
the fundamental group of the double branched cover $M_{L}^{(2)}$ of the $3$-sphere branched along $L$ and the infinite cyclic group $\mathbb{Z}$. 
That is, $\Pi_{D}^{(2)}\cong\pi_{1}\left(M_{L}^{(2)}\right)*\mathbb{Z}$. 
\end{remark}

We similarly define the associated core group $\Pi^{(2)}_{D}$ of an unoriented {\em virtual} link diagram $D$ by generators and relations as described above. 
(Note that virtual crossings do not produce any generator or relation.) 
It is not hard to see that $\Pi^{(2)}_{D}$ is preserved by welded Reidemeister moves, 
and hence we define the {\em associated core group $\Pi^{(2)}_{L}$} of an unoriented welded link $L$ to be $\Pi^{(2)}_{D}$ of a diagram $D$ of $L$. 
Moreover we have the following.

\begin{proposition}
\label{prop-V2}
If $n$ is even, then $\Pi^{(2)}_{L}$ is preserved by unoriented $V^{n}$-moves. 
\end{proposition}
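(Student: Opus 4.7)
The plan is to verify the proposition by directly comparing the Wirtinger-style presentations of $\Pi^{(2)}_D$ and $\Pi^{(2)}_{D'}$ for two virtual link diagrams $D$ and $D'$ of $L$ that differ by a single unoriented $V^n$-move inside a small disk $\Delta$. Outside $\Delta$ the two diagrams agree, so the same external arcs and external relations appear in both presentations, and the question reduces to comparing the local contributions of $\Delta$.

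Inside $\Delta$ of $D$ we have two strands interacting through $n$ classical crossings, one strand always playing the role of the over-strand. Label the over-strand's single arc by $y$, and label the arcs of the under-strand along the move by $x_0, x_1, \ldots, x_n$. Each crossing contributes the defining relation of the associated core group
\[
x_i \;=\; y\, x_{i-1}^{-1}\, y \qquad (1 \leq i \leq n).
\]
Writing $a \ast y := y a^{-1} y$ for the core operation, a one-line calculation gives $(a \ast y) \ast y = y (y a^{-1} y)^{-1} y = a$, so $\ast y$ is an involution on $\Pi^{(2)}_D$. Iterating this, one obtains $x_n = (\ast y)^n (x_0) = x_0$ for every even $n$.

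Next I would use Tietze transformations to eliminate the intermediate generators $x_1, \ldots, x_{n-1}$ from the presentation of $\Pi^{(2)}_D$ using the relations above. The only relation from $\Delta$ that survives is $x_n = x_0$, while the external relations are untouched. On the $D'$ side the $n$ crossings in $\Delta$ are virtual, so the under-strand in $\Delta$ is a single arc $x$ and $\Delta$ contributes no relation at all; identifying $x_0 = x_n = x$ then matches the two presentations term by term, yielding the desired isomorphism $\Pi^{(2)}_D \cong \Pi^{(2)}_{D'}$.

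The main obstacle is the bookkeeping forced by the word \emph{unoriented}: the $V^n$-move has several local variants, depending on which of the two strands is chosen as the over-strand and on the possible orientation patterns of the strands. In each variant, however, the associated core group ignores crossing signs, so every crossing in $\Delta$ produces a relation of exactly the form $x_i = y x_{i-1}^{-1} y$, and the involutivity of $\ast y$ makes the argument uniform across cases. The hypothesis that $n$ is even enters only at the single step $x_n = x_0$, which explains both why the proposition holds for even $n$ and why the argument breaks for odd $n$.
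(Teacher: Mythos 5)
Your proof is correct and is essentially the paper's argument: the key step is the same computation $y(yx^{-1}y)^{-1}y = x$ showing the core operation is an involution, so the presentations of $\Pi^{(2)}_{D}$ and $\Pi^{(2)}_{D'}$ coincide after eliminating the intermediate arcs. The only cosmetic difference is that the paper verifies the $n=2$ case and observes that an unoriented $V^{n}$-move for even $n$ is realized by $V^{2}$-moves, whereas you iterate the involution directly to get $x_{n}=x_{0}$ for all even $n$.
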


\begin{proof} 
$\Pi^{(2)}_{L}$ is preserved by unoriented $V^{2}$-moves as illustrated  in Figure~\ref{V2}, 
and furthermore, an unoriented $V^{n}$-move is realized by unoriented $V^{2}$-moves for any even number $n$. 
\end{proof}

\begin{figure}[htbp]
  \begin{center}
    \hspace{-7em}
    \begin{overpic}[width=8cm]{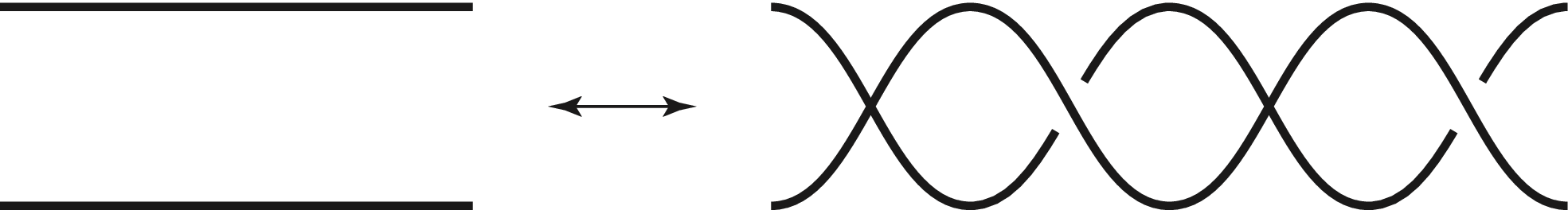}
      \put(-7,28){$x$}
      \put(-8,-1){$y$}
      \put(86.5,20){$V^{2}$} 
      \put(104,28){$x$} 
      \put(104,-1){$y$}
      \put(138.5,34){$y$} 
      \put(138.5,-8){$x$}
      \put(158,34){$yx^{-1}y$}
      \put(187,-8){$yx^{-1}y$}
      \put(232,28){$y(y^{-1}xy^{-1})y=x$}
      \put(232,-1){$y$} 
    \end{overpic}
  \end{center}
  \caption{}
  \label{V2}
\end{figure}

There are welded knots whose associated core groups are nontrivial,  
for example, all knots having nontrivial Fox colorings; see~\cite[Proposition~4.5]{P}.
Therefore, the proposition above gives an alternative proof for that 
the $V^{n}$-move is not an unknotting operation on welded knots for any even number $n$.

In~\cite{MWY17}, the authors introduced the {\em multiplexing} of crossings for an unoriented virtual link diagram, 
which yields a new unoriented virtual link diagram. 
Let $(m_{1},\ldots,m_{\mu})$ be a $\mu$-tuple of integers, and 
let $D=\bigcup_{i=1}^{\mu}D_{i}$ be an unoriented $\mu$-component virtual link diagram. 
For a classical crossing of $D$ whose overpass belongs to $D_{j}$, 
we define the {\em multiplexing} of the crossing associated with $m_{j}$ as a local change illustrated  in Figure~\ref{multiplexing}. 
When $m_{j}=0$, the multiplexing of the crossing is the the crossing virtualization of it. 
The number of classical crossings that appear in the multiplexing of the crossing is the absolute value of $m_{j}$. 
Let $D(m_{1},\ldots,m_{\mu})$ denote the virtual link diagram obtained from $D$ by the multiplexing of all classical crossings of $D$ associated with $(m_{1},\ldots,m_{\mu})$. 
For welded isotopic virtual link diagrams $D$ and $D'$, 
$D(m_{1},\ldots,m_{\mu})$ and $D'(m_{1},\ldots,m_{\mu})$ are also welded isotopic for any $(m_{1},\ldots,m_{\mu})\in\mathbb{Z}^{\mu}$~\cite[Theorem~2.1]{MWY17}.
For an unoriented $\mu$-component welded link $L$, 
we define $L(m_{1},\ldots,m_{\mu})$ to be $D(m_{1},\ldots,m_{\mu})$ of a diagram $D$ of $L$.

It is not hard to see that 
$L(m_{1},\ldots,m_{\mu})$ can be deformed into $L(0,\ldots,0)$ by unoriented $V^{2}$-moves for any $\mu$-tuple $(m_{1},\ldots,m_{\mu})$ of even numbers. 
Since $L(0,\ldots,0)$ is trivial, we have the following.

\begin{figure}[htbp]
  \begin{center}
    \begin{overpic}[width=10cm]{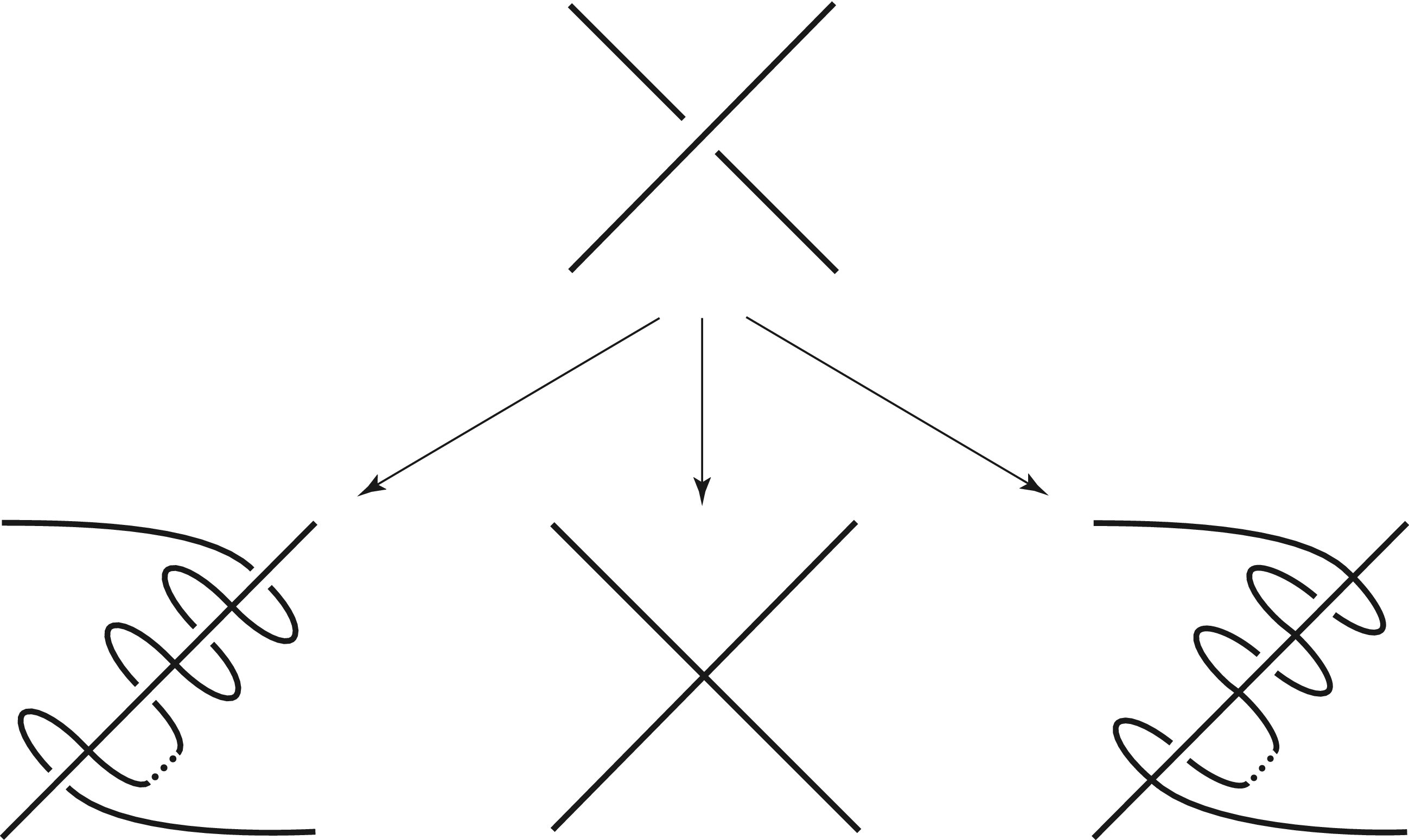}
      \put(14,-15){$m_{j}>0$}
      \put(125,-15){$m_{j}=0$}
      \put(235,-16){$m_{j}<0$}
      \put(160,150){$\leftarrow$ $D_{j}$}
      \put(51,57){{\scriptsize $1$}}
      \put(-4,11){{\scriptsize $m_{j}$}}
      \put(265,51){{\scriptsize $1$}}
      \put(227,27){{\scriptsize $|m_{j}|$}}
    \end{overpic}
  \end{center}
  \vspace{1em}
  \caption{Multiplexing of a crossing}
  \label{multiplexing}
\end{figure}

\begin{proposition}
\label{th-V2}
Let $(m_{1},\ldots,m_{\mu})$ be a $\mu$-tuple of even numbers.  
For any unoriented $\mu$-component welded link $L$, 
$L(m_{1},\ldots,m_{\mu})$ is deformed into the $\mu$-component trivial link by unoriented $V^{2}$-moves.  
\end{proposition}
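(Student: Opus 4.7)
The plan is to follow the two-step outline in the author's parenthetical remark: first show that $L(m_{1},\ldots,m_{\mu})$ is deformed into $L(0,\ldots,0)$ by unoriented $V^{2}$-moves, and then observe that $L(0,\ldots,0)$ is welded-isotopic to the $\mu$-component trivial link.

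For the first step, I fix an unoriented diagram $D$ of $L$ and work one original crossing at a time. Let $c$ be a classical crossing of $D$ whose overpass belongs to $D_{j}$. By Figure~\ref{multiplexing}, the multiplexing of $c$ with parameter $m_{j}$ replaces $c$ by a local tangle containing exactly $|m_{j}|$ classical crossings, all sharing a single $D_{j}$-overstrand and arranged consecutively along the common under-strand. Since $|m_{j}|$ is even, I partition these $|m_{j}|$ crossings into $|m_{j}|/2$ adjacent pairs. Each pair has the precise local shape appearing on the left of Figure~\ref{V2}, so a single unoriented $V^{2}$-move converts it to two virtual crossings. Performing this virtualization on every pair, at every original crossing of $D$, turns each classical crossing produced by a multiplexing into a virtual crossing; the resulting diagram is precisely $D(0,\ldots,0)$.

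For the second step, $D(0,\ldots,0)$ is obtained from $D$ by applying the crossing virtualization at every classical crossing (the $m_{j}=0$ case of multiplexing is literally the crossing virtualization), so it contains no classical crossings. As already recalled in the introduction, any virtual link diagram can be deformed into the trivial link by crossing virtualization repeatedly; in particular a diagram that already has only virtual crossings is welded-isotopic to the trivial link, since the virtual Reidemeister moves VR1--VR4 realize the standard detour argument and allow each component to be pushed off every other component and shrunk to a round circle. Therefore $L(0,\ldots,0)$ is the $\mu$-component trivial link, which together with the first step completes the proof.

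The only point requiring genuine attention is in the first step, namely the verification that the $|m_{j}|$ crossings produced at each multiplexed site really do fit the template of Figure~\ref{V2} when grouped into consecutive pairs — that is, that they share a common overstrand and are stacked along the common under-strand in the configuration on the left of Figure~\ref{V2}. This is built into the very definition of multiplexing shown in Figure~\ref{multiplexing}, so the check reduces to a local inspection and I do not anticipate a substantive obstacle beyond it; the proof is, as the author suggests, essentially a routine unpacking of the two-step outline.
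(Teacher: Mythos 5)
Your proposal is correct and follows essentially the same route as the paper, which proves the proposition by exactly the two-step outline you cite (pair up the $|m_{j}|$ crossings at each multiplexed site, remove each pair by an unoriented $V^{2}$-move to reach $L(0,\ldots,0)$, and note that a diagram with only virtual crossings is trivial); the paper simply leaves these details as "not hard to see." Your elaboration of the local check against the multiplexing template is the right point to verify and is handled correctly.
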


In~\cite[Theorem~3.2]{MWY17}, the authors proved that unoriented 
classical knots $K$ and $K'$ are equivalent up to mirror image if and only if $K(m)$ and $K'(m)$ are welded isotopic up to mirror image for any fixed non-zero integer $m$. 
Hence, it follows that 
if a classical knot $K$ is nontrivial then $K(m)$ is also nontrivial. 
By Propositions~\ref{prop-V2} and~\ref{th-V2}, if $m$ is even then $\Pi^{(2)}_{K(m)}$ is isomorphic to that of the unknot, that is, $\Pi^{(2)}_{K(m)}\cong\mathbb{Z}$.  
Therefore, we have the following theorem 
although by Remark~\ref{rem-MWada} the associated core groups seem to be very strong invariants.

\begin{theorem}
\label{prop-infinitely}
Let $m$ $(\neq 0)$ be an even number. 
For any nontrivial unoriented welded knot $K$, 
$K(m)$ is nontrivial and $\Pi^{(2)}_{K(m)}\cong\mathbb{Z}$. 
\end{theorem}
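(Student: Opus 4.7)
The plan is to assemble the three ingredients laid out in the paragraph immediately preceding the theorem, each addressing one claim of the statement. The statement has two separate assertions about $K(m)$: that it is nontrivial as a welded knot, and that its associated core group is $\mathbb{Z}$.

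For nontriviality I would invoke \cite[Theorem~3.2]{MWY17} with $K'$ taken to be the unknot. Since the unknot admits a diagram with no classical crossings, the multiplexing operation changes nothing, so $K'(m)$ is again the unknot for every $m$. If $K(m)$ were welded isotopic to the unknot (even up to mirror image), the cited theorem would force $K$ to be equivalent to the unknot up to mirror image, contradicting the nontriviality of $K$. Hence $K(m)$ is a nontrivial welded knot. (The usable case is that of classical $K$, since \cite[Theorem~3.2]{MWY17} is stated for classical knots; this suffices to produce the infinitely many examples advertised in the introduction by ranging over nontrivial classical knots.)

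For the computation of the associated core group, I would specialize Proposition~\ref{th-V2} to $\mu=1$: since $m$ is even, $K(m)$ is deformed into the trivial knot by a sequence of unoriented $V^{2}$-moves. Proposition~\ref{prop-V2} says $\Pi^{(2)}$ is invariant under unoriented $V^{n}$-moves for every even $n$, and in particular under $V^{2}$-moves, so $\Pi^{(2)}_{K(m)}\cong \Pi^{(2)}_{U}$ where $U$ denotes the unknot. A trivial diagram of $U$ has a single arc and no classical crossings, so its associated core group is free on one generator, i.e., isomorphic to $\mathbb{Z}$.

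There is no substantive obstacle beyond bookkeeping: every ingredient has already been established. The only points to handle carefully are that \cite[Theorem~3.2]{MWY17} compares knots up to mirror image (automatic for the unknot, which equals its own mirror), and that Proposition~\ref{th-V2} is invoked in the one-component setting. The force of the theorem is that, despite Remark~\ref{rem-MWada} suggesting $\Pi^{(2)}$ is a very strong invariant on classical links, the multiplexing construction with an even parameter produces arbitrarily many nontrivial welded knots on which $\Pi^{(2)}$ degenerates to $\mathbb{Z}$.
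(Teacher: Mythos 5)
Your proof is correct and follows the paper's own argument essentially verbatim: nontriviality of $K(m)$ via \cite[Theorem~3.2]{MWY17} compared against the unknot, and $\Pi^{(2)}_{K(m)}\cong\mathbb{Z}$ by combining Propositions~\ref{th-V2} and~\ref{prop-V2} with the trivial computation for the unknot. Your parenthetical caveat that the cited multiplexing theorem is stated only for classical $K$ matches the paper's own wording (``if a classical knot $K$ is nontrivial then $K(m)$ is also nontrivial''), so there is no substantive difference in approach.
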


\section{$\overline{V}(n)$-moves and $\overline{V}^{n}$-moves}

\subsection{$\overline{V}(n)$-moves}
When $n$ is odd, one may consider the $V(n)$-move involving two strands being oriented antiparallel. 
We call such a move the {\em $\overline{V}(n)$-move}. 
In this subsection, we will show that $V(n)$- and $\overline{V}(n)$-moves are equivalent. 

For an odd number $n$, we define the {\em $\overline{A}(n)$-move} as a local move on an arrow presentation illustrated in Figure~\ref{A(n)opposite}. 
By arguments similar to those in the proof of Lemma~\ref{lem-A(n)V(n)}, we have the following.

\begin{figure}[htbp]
  \begin{center}
    \begin{overpic}[width=10cm]{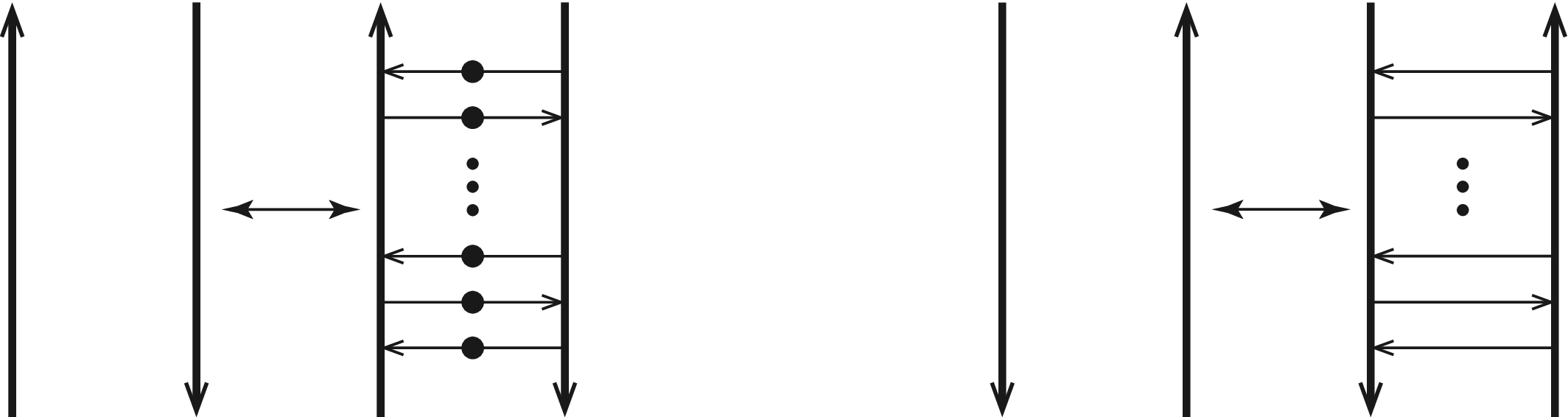}
      \put(44,43){$\overline{A}(n)$}
      \put(106,10){{\footnotesize $1$}}
      \put(106,19){{\footnotesize $2$}}
      \put(106,61){{\footnotesize $n$}}
      
      \put(222,43){$\overline{A}(n)$}
      \put(284.5,10){{\footnotesize $1$}}
      \put(284.5,19){{\footnotesize $2$}}
      \put(284.5,61){{\footnotesize $n$}}
    \end{overpic}
  \end{center}
  \caption{$\overline{A}(n)$-move on an arrow presentation}
  \label{A(n)opposite}
\end{figure}

\begin{lemma}
Let $n$ be an odd number. 
An arrow presentation for a $\overline{V}(n)$-move is realized by a sequence of the $\overline{A}(n)$-move and arrow moves. 
Conversely, surgery along an $\overline{A}(n)$-move is realized by a sequence of the $\overline{V}(n)$-move and welded Reidemeister moves. 
\end{lemma}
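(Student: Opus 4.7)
The plan is to follow the structure of the proof of Lemma~\ref{lem-A(n)V(n)} very closely, adapting each step to accommodate the fact that the two horizontal strands in the $\overline{V}(n)$-move are oriented antiparallel rather than parallel. In particular, the proof will proceed in two halves: realizing an arrow presentation for a $\overline{V}(n)$-move from the left-hand side of Figure~\ref{A(n)opposite}, and showing that surgery along the left-hand side of Figure~\ref{A(n)opposite} yields a $\overline{V}(n)$-move modulo welded Reidemeister moves.

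As a first step, I would verify that the right-hand side of Figure~\ref{A(n)opposite} is realized by a sequence consisting of the left-hand side $\overline{A}(n)$-move together with arrow moves, in direct analogy with Figure~\ref{pf-A(n)odd}. The argument uses AR9 to reverse orientations of w-arrows at the cost of introducing or removing twists, together with AR10 and the allowable moves AR11--AR12 established earlier, in order to rearrange the $n$ w-arrows into the alternate configuration. Because $n$ is odd, the total change in the number of twists works out correctly so that the final configuration matches the right-hand side of Figure~\ref{A(n)opposite}.

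Second, I would show that an arrow presentation for a $\overline{V}(n)$-move is realized by the left-hand side of Figure~\ref{A(n)opposite} together with arrow moves. Performing surgery on the $n$ w-arrows according to the rule of Figure~\ref{surgery}, each w-arrow contributes a classical crossing between the two strands. Since one of the strands is now oriented downward relative to the $V(n)$-case, the surgery rule produces classical crossings whose local configuration is exactly the antiparallel $\overline{V}(n)$-pattern. Twists on the w-arrows convert to virtual half-twists which, after applying welded Reidemeister moves VR1--VR4, leave behind only the $n$ classical crossings forming a $\overline{V}(n)$-move. The converse direction, that surgery along the left-hand side $\overline{A}(n)$-move produces a $\overline{V}(n)$-move up to welded Reidemeister moves, follows by reading the same picture in the opposite direction: one can always insert w-arrows to replace each of the $n$ classical crossings in a $\overline{V}(n)$-configuration, as in Figure~\ref{Aprst}.

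The main obstacle will be careful bookkeeping of signs and twists. The surgery rule of Figure~\ref{surgery} is sensitive to the orientation of the strand containing the tail of each w-arrow, and reversing this orientation (as required in passing from the $A(n)$- to the $\overline{A}(n)$-setting) changes the sign of the resulting classical crossing. Tracking precisely which w-arrows should carry twists, so that after surgery all $n$ crossings match the $\overline{V}(n)$-pattern, is the technical point where the hypothesis that $n$ is odd is used. Once this combinatorial matching is verified, the rest of the argument is a direct translation of the pictorial proof used for Lemma~\ref{lem-A(n)V(n)}.
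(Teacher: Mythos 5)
Your proposal is correct and follows essentially the same route as the paper, which proves this lemma simply by invoking arguments similar to those in the proof of Lemma~\ref{lem-A(n)V(n)}: realize the right-hand configuration of Figure~\ref{A(n)opposite} from the left-hand one by arrow moves, then translate between the left-hand $\overline{A}(n)$-configuration and the $\overline{V}(n)$-move via surgery and the replacement of classical crossings by {\rm w}-arrows as in Figure~\ref{Aprst}. Your additional attention to the orientation-dependence of the surgery rule and the twist bookkeeping is precisely the point the paper leaves implicit.
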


By deformations similar to those in Figure~\ref{pf-Hexch}, 
we have the following.

\begin{lemma}\label{lem-Hdash}
An {\rm H}-move is realized by a sequence of the ${\rm H}'$-move illustrated in Figure~$\ref{Hdash}$ and arrow moves. 
\end{lemma}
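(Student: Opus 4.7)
The plan is to follow the same strategy as in the proof of Sublemma~\ref{lem-Hexch} (Figure~\ref{pf-Hexch}): give a short, explicit sequence of arrow moves bracketing a single application of the H$'$-move. Since the H$'$-move should differ from the H-move only by the orientation of (some of) the strands touched by the w-arrows in question, the idea is to use arrow moves to locally reverse the relevant strand orientation, apply the H$'$-move, and then undo the reversal.

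More concretely, starting from the initial configuration of an H-move, I would first use AR9 (which flips the direction of an endpoint of a w-arrow at the cost of introducing a twist), together with AR8 and AR10 to accommodate the resulting twists, to transform the picture into one that matches the left-hand side of the H$'$-move. One application of the H$'$-move then yields a configuration which, after applying the preparatory arrow moves in reverse, coincides with the terminal configuration of the H-move. This is entirely parallel to the upper and lower sides of Figure~\ref{pf-Hexch}, where orientation reversals and twist handling are used to reduce various cases of the heads exchange move to a single H-move.

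The main obstacle will be bookkeeping the twists. The AR9-based preparation introduces twists on the w-arrows involved, and if the H$'$-move itself carries twists or acts on strands with nontrivial orientation data, one must check that the composition matches the desired output of the H-move after appealing to AR8, AR10, and possibly the allowable moves AR11 and AR12 introduced in Section~\ref{sec-arrow}. Once this is verified, the entire derivation is best recorded as a single figure in the style of Figure~\ref{pf-Hexch}, with each arrow indicating either an arrow move, an allowable move, or the single H$'$-move.
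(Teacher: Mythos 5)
Your proposal is correct and matches the paper's approach: the paper proves Lemma~\ref{lem-Hdash} precisely by invoking ``deformations similar to those in Figure~\ref{pf-Hexch}'', i.e.\ bracketing a single ${\rm H}'$-move by arrow moves that reverse the relevant strand orientation (introducing and then absorbing twists), exactly as you describe. The only difference is presentational: the paper records no explicit figure for this lemma, whereas you propose drawing one in the style of Figure~\ref{pf-Hexch}.
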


\begin{figure}[htbp]
  \begin{center}
    \begin{overpic}[width=5cm]{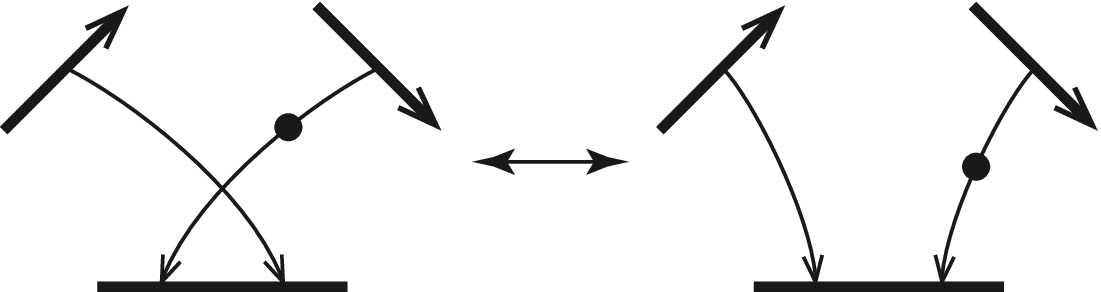}
      \put(67,20){${\rm H}'$}
    \end{overpic}
  \end{center}
  \caption{${\rm H}'$-move}
  \label{Hdash}
\end{figure}

Here, we consider two {\em allowable moves ${\rm AR11}'$ and ${\rm AR12}'$} illustrated in Figure~\ref{AR1112dash}, 
each of which is realized by a sequence of arrow moves similar to that in Figure~\ref{pf-AR112}.  
Using the moves ${\rm AR11}'$ and ${\rm AR12}'$, we have the following.

\begin{figure}[htbp]
  \begin{center}
    \begin{overpic}[width=11cm]{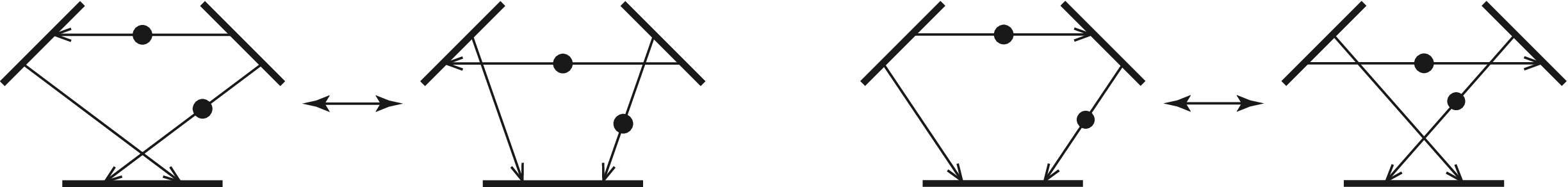}
      \put(58,20.5){${\rm AR11}'$} 
      \put(230,20.5){${\rm AR12}'$}
    \end{overpic}
  \end{center}
  \caption{Allowable moves ${\rm AR11}'$ and ${\rm AR12}'$}
  \label{AR1112dash}
\end{figure}

\begin{lemma}\label{lem-A(n)V(n)-bar}
Let $n$ be an odd number. 
An ends exchange move is realized by a sequence of the $\overline{A}(n)$-move and arrow moves. 
\end{lemma}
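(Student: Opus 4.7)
The plan is to mirror the proof of Lemma~\ref{lem-HvsA(n)} step for step, but using the ``primed'' versions of the auxiliary moves. By Lemma~\ref{lem-ends}, every ends exchange move is realized by a sequence of the H-move and arrow moves, and by Lemma~\ref{lem-Hdash}, the H-move is in turn realized by the ${\rm H}'$-move and arrow moves. Therefore the entire problem reduces to showing that a single ${\rm H}'$-move can be realized by a sequence of $\overline{A}(n)$-moves and arrow moves.

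To carry this out, I would reproduce the construction of Figure~\ref{pf-propUC} (upper row), but swap every occurrence of AR11, AR12 and $A(n)$ for ${\rm AR11}'$, ${\rm AR12}'$ and $\overline{A}(n)$ respectively. Concretely, starting from the left-hand side of the ${\rm H}'$-move, I would first apply an $\overline{A}(n)$-move to replace the single crossing {\rm w}-arrow configuration by the $n$-fold fan of parallel {\rm w}-arrows with antiparallel tail/head strand (the right-hand side of Figure~\ref{A(n)opposite}). Next, the allowable moves ${\rm AR11}'$ and ${\rm AR12}'$ slide the head of the interacting {\rm w}-arrow through the fan, just as AR11 and AR12 did in the parallel case. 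A final $\overline{A}(n)$-move then collapses the fan back into a single arrow, yielding the right-hand side of the ${\rm H}'$-move. Because $n$ is odd, the parity of the number of twists introduced during the slide matches, so no extra twist corrections are needed.

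The subtle point, and the one that really uses oddness, is checking that the fan introduced by $\overline{A}(n)$ has its tails and heads attached to antiparallel strands in precisely the configuration through which ${\rm AR11}'$ and ${\rm AR12}'$ can be applied. This is exactly the role the primed allowable moves were introduced to play, so the only thing to verify in each of the several subcases (determined by the orientation of the strand containing the interacting tail, and by the presence or absence of a twist on each {\rm w}-arrow) is that the sequence closes up correctly; this is a direct visual analogue of Figure~\ref{pf-propUC} and of the proof of Sublemma~\ref{lem-Hexch}, and so I would simply record the analogue of Figure~\ref{pf-propUC} and remark that the remaining subcases are handled similarly. No new obstacle appears, since the allowable moves ${\rm AR11}'$ and ${\rm AR12}'$ have already been certified to be consequences of arrow moves.
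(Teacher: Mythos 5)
Your proposal is correct and follows essentially the same route as the paper: reduce via Lemma~\ref{lem-ends} and Lemma~\ref{lem-Hdash} to realizing a single ${\rm H}'$-move, then carry out the primed analogue of Figure~\ref{pf-propUC} using $\overline{A}(n)$, ${\rm AR11}'$ and ${\rm AR12}'$, which is exactly what the paper's Figure~\ref{pf-A_n-bar} records. No substantive difference.
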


\begin{proof}
By Lemmas~\ref{lem-ends} and ~\ref{lem-Hdash}, 
it is enough to show that an ${\rm H}'$-move is realized by a sequence of the $\overline{A}(n)$-move and arrow moves.
Figure~\ref{pf-A_n-bar} indicates the proof. 
While the proof describes only when $n=3$ in the figure, 
it is essentially the same in all cases. 
\end{proof}

\begin{figure}[htbp]
  \begin{center}
    \begin{overpic}[width=12cm]{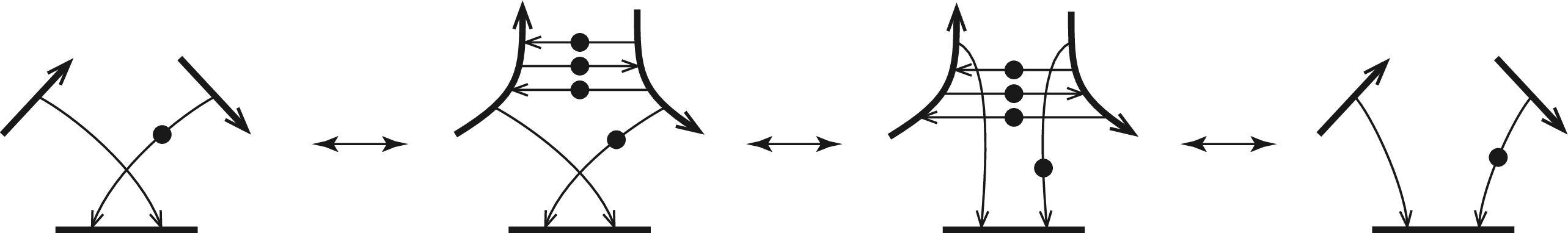}
      \put(68,25){$\overline{A}(n)$} 
      \put(152.5,25){${\rm AR11}', 12'$}
      \put(257,25){$\overline{A}(n)$} 
    \end{overpic}
  \end{center}
  \caption{}
  \label{pf-A_n-bar}
\end{figure}

Now we can show the following.

\begin{proposition}
Let $n$ be an odd number. 
$V(n)$- and $\overline{V}(n)$-moves are equivalent. 
\end{proposition}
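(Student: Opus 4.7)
The plan is to work on the level of arrow presentations. By Lemma~\ref{lem-A(n)V(n)} and the first lemma of this subsection, the equivalence of $V(n)$- and $\overline{V}(n)$-moves reduces to showing that the $A(n)$- and $\overline{A}(n)$-moves are equivalent modulo arrow moves, i.e., that each is realizable by the other together with arrow moves.

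The key observation is that the head-tail reversal move (Figure~\ref{HTrev}) provides a bridge between the two configurations: applying a head-tail reversal to each of the $n$ w-arrows appearing in the input of an $A(n)$-move effectively reverses the strand orientation against which the w-arrows are read, thereby converting the ``parallel'' configuration into the ``antiparallel'' one of an $\overline{A}(n)$-move, and vice versa. Thus, provided both the $A(n)$- and the $\overline{A}(n)$-move suffice to realize the head-tail reversal move modulo arrow moves, the two moves realize each other: apply the $n$ head-tail reversals, perform the other move, then undo the reversals.

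For the $A(n)$-direction, the realization of the head-tail reversal move is already provided by Lemma~\ref{lem-realizingA^{n}} (via the deformation of Figure~\ref{pf-HTrev}), so the $\overline{A}(n)$-move can be realized by $A(n)$- and arrow moves. For the $\overline{A}(n)$-direction, a parallel argument is needed: I would reproduce the deformation of Figure~\ref{pf-HTrev} with $\overline{A}(n)$-moves in place of $A(n)$-moves, invoking the allowable moves ${\rm AR11}'$ and ${\rm AR12}'$ of Figure~\ref{AR1112dash} and Lemma~\ref{lem-A(n)V(n)-bar} in place of their $A(n)$-counterparts (AR11, AR12, and Lemma~\ref{lem-HvsA(n)}) to justify each intermediate step.

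The main obstacle is therefore the $\overline{A}(n)$-analogue of Lemma~\ref{lem-realizingA^{n}}; once that is in place, the rest is a routine assembly. A secondary bookkeeping point, which is easy but needs to be checked carefully by staring at Figures~\ref{A(n)odd},~\ref{A(n)even} and~\ref{A(n)opposite}, is that simultaneously reversing all $n$ w-arrows in an $A(n)$-configuration really does produce an $\overline{A}(n)$-configuration (and vice versa) up to arrow moves, so that the strategy described above goes through for both the input and the output of the move.
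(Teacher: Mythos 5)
Your reduction to arrow presentations is the same as the paper's: by Lemma~\ref{lem-A(n)V(n)} and its $\overline{A}(n)$-analogue it suffices to show that the $A(n)$- and $\overline{A}(n)$-moves realize each other modulo arrow moves. The gap is in your central mechanism. The head-tail reversal move of Figure~\ref{HTrev} only exchanges which strand carries the tail and which carries the head of a given {\rm w}-arrow (adjusting twists accordingly); it does not touch the orientations of the strands of the underlying diagram $T$. But the difference between the $A(n)$- and $\overline{A}(n)$-configurations (Figures~\ref{A(n)odd} and~\ref{A(n)opposite}) is precisely the relative orientation of the two strands of $T$, not the direction of the {\rm w}-arrows: the surgery of Figure~\ref{surgery} is sensitive to the orientation of the tail strand, which is data of the diagram, not of the arrows. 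Indeed, in Lemma~\ref{lem-realizingA^{n}} the head-tail reversal is used to pass from the $A(n)$- to the $A^{n}$-configuration, and both of those live on \emph{parallel} strands. So after reversing all $n$ {\rm w}-arrows of an $A(n)$-configuration you still have two parallel strands; your ``secondary bookkeeping point'' is in fact the place where the argument fails, since no manipulation of the {\rm w}-arrows alone can turn the parallel local picture into the antiparallel one.

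What is needed, and what the paper supplies, is a step that changes the local picture of the strands themselves. The paper's proof is the explicit deformation of Figure~\ref{pf-A(n)}: starting from the $\overline{A}(n)$-configuration, one applies arrow moves and ends exchange moves (the latter available from $A(n)$-moves by Lemma~\ref{lem-HvsA(n)}) to rearrange the {\rm w}-arrow endpoints into a position where a single $A(n)$-move applies, and then undoes the auxiliary moves; the converse direction is the symmetric deformation. The allowable moves ${\rm AR11}'$ and ${\rm AR12}'$ of Figure~\ref{AR1112dash}, which you invoke only for the deferred $\overline{A}(n)$-analogue of Lemma~\ref{lem-realizingA^{n}}, are in the paper part of the machinery for handling endpoints on oppositely oriented strands. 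Some deformation of this kind is unavoidable, and it is missing from your proposal.
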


\begin{proof}
By Lemmas~\ref{lem-A(n)V(n)} and \ref{lem-A(n)V(n)-bar}, 
it is enough to show that $A(n)$- and $\overline{A}(n)$-moves are equivalent. 
Figure~\ref{pf-A(n)} shows that an $\overline{A}(n)$-move is realized by a sequence of the $A(n)$-move and arrow moves, 
while the figure describes only when $n=3$. 
(Note that we can use ends exchange moves by Lemma~\ref{lem-HvsA(n)}.) 
Conversely, by deformations similar to those in the figure, it is not hard to see that an $A(n)$-move is realized by a sequence of the $\overline{A}(n)$-move and arrow moves. 
This completes the proof. 
\end{proof}

\begin{figure}[htbp]
  \begin{center}
    \begin{overpic}[width=12cm]{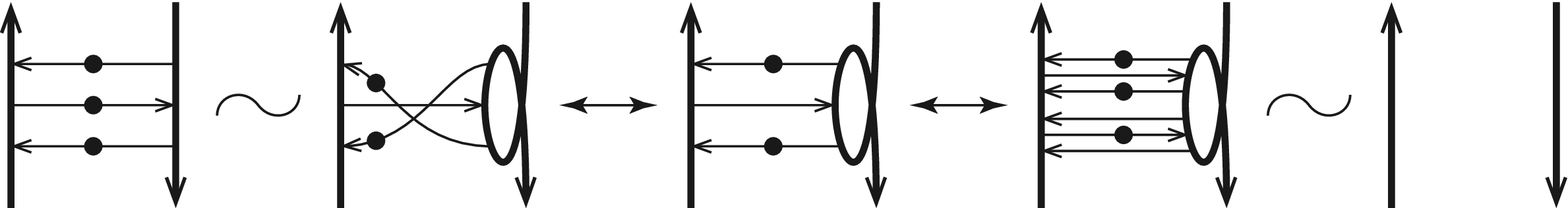} 
      \put(49,27){AR}
      \put(125,27){{\scriptsize ends}}
      \put(117,14){{\scriptsize exchange}}
      \put(199,27){$A(n)$}
      \put(277.5,27){AR}
    \end{overpic}
  \end{center}
  \caption{}
  \label{pf-A(n)}
\end{figure}

\subsection{$\overline{V}^{n}$-moves}

For a positive integer~$n$, we define the {\em $\overline{V}^{n}$-move} as the $V^{n}$-move involving two strands being oriented antiparallel. 
Also, we define the {\em $A^{n}$-move} as a local move on an arrow presentation illustrated in Figure~\ref{A^n-bar}. 
By arguments similar to those in the proof of Lemma~\ref{lem-A(n)V(n)}, we have the following.

\begin{figure}[htbp]
  \begin{center}
    \begin{overpic}[width=10cm]{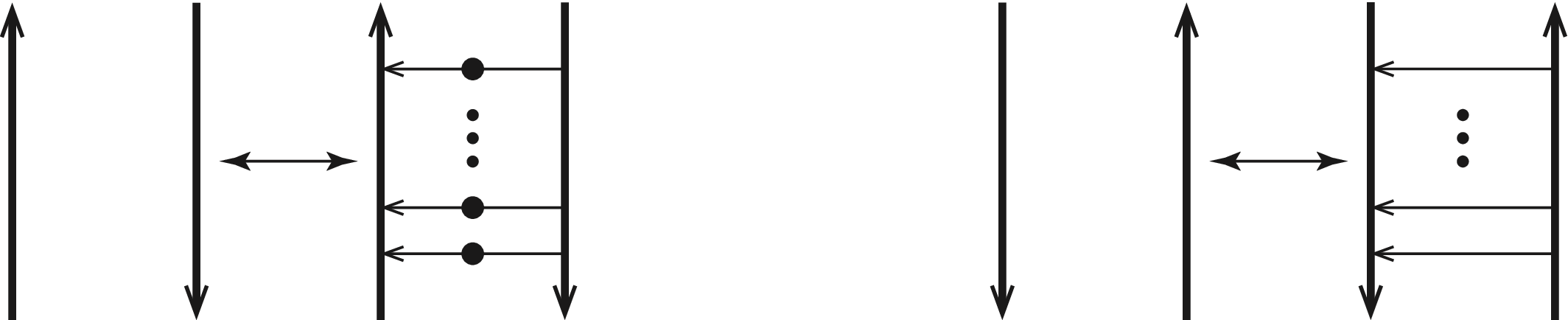}
      \put(48,32){$\overline{A}^{n}$}
      \put(105,9){{\footnotesize $1$}}
      \put(105,18){{\footnotesize $2$}}
      \put(105,43){{\footnotesize $n$}}
      
      \put(227,32){$\overline{A}^{n}$}
      \put(284.5,9){{\footnotesize $1$}}
      \put(284.5,18){{\footnotesize $2$}}
      \put(284.5,43){{\footnotesize $n$}}
    \end{overpic}
  \end{center}
  \caption{$\overline{A}^{n}$-move on an arrow presentation}
  \label{A^n-bar}
\end{figure}

\begin{lemma}\label{lem-A^nV^n-bar}
Let $n$ be a positive integer. 
An arrow presentation for a $\overline{V}^{n}$-move is realized by a sequence of the $\overline{A}^{n}$-move and arrow moves. 
Conversely, surgery along an $\overline{A}^{n}$-move is realized by a sequence of the $\overline{V}^{n}$-move and welded Reidemeister moves. 
\end{lemma}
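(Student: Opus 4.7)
The plan is to mimic, verbatim up to strand-orientation reversal, the two-part argument used in the proof of Lemma~\ref{lem-A(n)V(n)} (and its direct analog Lemma~\ref{lem-A^nV^n}). The lemma has two implications, one in each direction, and each is verified by inspecting what surgery produces on a small local picture.

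First I would establish that the two depictions of the $\overline{A}^{n}$-move in Figure~\ref{A^n-bar} (left- and right-hand sides) are interchangeable by arrow moves alone. The argument parallels Figure~\ref{pf-A(n)odd}: one slides the strand carrying the heads of the $n$ consecutive w-arrows past the bundle, introducing virtual crossings that are absorbed by VR2, VR3 and the arrow moves AR1--AR3. Because the strands in Figure~\ref{A^n-bar} are oriented antiparallel rather than parallel, the relevant slide takes the opposite sense, but no twists are introduced (the w-arrows in Figure~\ref{A^n-bar} carry no decorations), so the same AR-move sequence applies.

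Second, I would show that surgery along the left-hand $\overline{A}^{n}$-move produces exactly the right-hand side of the $\overline{V}^{n}$-move, up to welded Reidemeister moves. Reading Figure~\ref{surgery} locally at each of the $n$ w-arrows: each tail on the upper strand contributes a pair of classical crossings, and each head on the antiparallel lower strand inserts the over-strand in the correct position. The antiparallel orientation only flips the cyclic order in which the $n$ over-strands appear, and this is precisely the configuration drawn for the $\overline{V}^{n}$-move. Conversely, starting from an arrow presentation for a $\overline{V}^{n}$-move, I convert each of the $n$ classical crossings into a virtual crossing together with a w-arrow as in Figure~\ref{Aprst}, then gather the tails along the upper strand and the heads along the antiparallel lower strand using VR2, VR3 and AR1--AR3; the resulting configuration is, by the first step, an $\overline{A}^{n}$-move (left-hand side of Figure~\ref{A^n-bar}).

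The main obstacle I expect is bookkeeping of twists and of the orientations of local arcs of $D$ at tails of the w-arrows, since Figure~\ref{surgery} is sensitive to this data. However, because Figure~\ref{A^n-bar} is drawn with no twist decorations and with specific prescribed orientations consistent with Figure~\ref{A^n}, this reduces to a direct check that the surgery rules reproduce the $\overline{V}^{n}$-move and its inverse without spurious twists (any stray half-twists can be removed by AR8 or AR10). Once this local check is carried out, the argument of Lemma~\ref{lem-A^nV^n} applies word for word, giving both directions of the equivalence.
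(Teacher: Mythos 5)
Your proposal is correct and follows essentially the paper's own (one-line) argument: the paper proves Lemma~\ref{lem-A^nV^n-bar} simply by asserting it follows ``by arguments similar to those in the proof of Lemma~\ref{lem-A(n)V(n)}'', and your two-step plan --- relating the two sides of Figure~\ref{A^n-bar} by arrow moves, then checking that surgery along the left-hand configuration realizes the $\overline{V}^{n}$-move and conversely converting each classical crossing to a w-arrow as in Figure~\ref{Aprst} --- is exactly that argument transcribed to the antiparallel setting. The only small caution is your parenthetical that ``stray half-twists can be removed by AR8 or AR10'': a twist genuinely changes the result of surgery (it encodes the crossing sign), so it cannot be discarded; rather, the orientation-sensitive local check must produce precisely the twist data prescribed in Figure~\ref{A^n-bar}, which is what your bookkeeping step is for.
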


It is not hard to see that $A^{n}$- and $\overline{A}^{n}$-moves are equivalent. 
This together with Lemmas~\ref{lem-A^nV^n} and~\ref{lem-A^nV^n-bar} implies the following.

\begin{proposition}
Let $n$ be a positive integer. 
$V^{n}$- and $\overline{V}^{n}$-moves are equivalent. 
\end{proposition}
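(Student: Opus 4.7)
The plan is to reduce the proposition, via Lemmas~\ref{lem-A^nV^n} and~\ref{lem-A^nV^n-bar}, to the equivalence of the $A^{n}$- and $\overline{A}^{n}$-moves on arrow presentations, where two such moves are called equivalent if each is realized by a sequence of the other together with arrow moves. Once this is established, any $V^{n}$-move on a virtual link diagram is translated by Lemma~\ref{lem-A^nV^n} into an $A^{n}$-move on an arrow presentation, the equivalence rewrites it as a sequence of $\overline{A}^{n}$-moves and arrow moves, and Lemma~\ref{lem-A^nV^n-bar} then descends this back to a sequence of $\overline{V}^{n}$-moves and welded Reidemeister moves; the converse direction is symmetric.

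For the equivalence of $A^{n}$- and $\overline{A}^{n}$-moves I would follow the template of Figure~\ref{pf-A(n)}, which established the analogous equivalence between $A(n)$ and $\overline{A}(n)$. Given an $\overline{A}^{n}$-configuration, the only discrepancy with an $A^{n}$-configuration is the reversed orientation of one of the two parallel strands. This reversal can be simulated locally by applying AR9 (together with AR8, AR10 and the ends exchange moves, which are legitimate arrow-move consequences by Lemma~\ref{lem-ends}) so that each of the $n$ tails is transported onto a segment oriented in the same direction as in the $A^{n}$-configuration. Once the configuration has been rearranged in this way, the $A^{n}$-move is applied, and the preparatory arrow moves are undone on the output side so that the overall effect is precisely that of an $\overline{A}^{n}$-move. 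The reverse direction (realizing an $A^{n}$-move by $\overline{A}^{n}$-moves and arrow moves) is entirely analogous.

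The main obstacle is the bookkeeping of twists: since neither $A^{n}$ nor $\overline{A}^{n}$ permits twists on the $n$ parallel $\mathrm{w}$-arrows, any twists produced by AR9 during the orientation manipulation must cancel in pairs, and one must verify this cancellation via AR8 together with the allowable moves AR11 and AR12. A secondary subtlety, to be taken care of, is that the head-tail reversal move is \emph{not} available here --- it was derived in Lemma~\ref{lem-realizingA^{n}} from an $A(n)$-move rather than an $A^{n}$-move --- so the argument must proceed using only the arrow moves AR1--AR10, the ends exchange moves, and the $A^{n}$-move itself. With these constraints understood, the diagrammatic verification proceeds exactly as in Figure~\ref{pf-A(n)}, which justifies the author's remark that the equivalence is not hard to see and completes the reduction.
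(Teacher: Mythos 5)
Your overall architecture is the same as the paper's: both reduce the proposition, via Lemmas~\ref{lem-A^nV^n} and~\ref{lem-A^nV^n-bar}, to the equivalence of the $A^{n}$- and $\overline{A}^{n}$-moves on arrow presentations. The paper simply asserts that latter equivalence (``it is not hard to see''), whereas you attempt to sketch it --- but your sketch rests on a false premise.

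The error is your claim that the ends exchange moves ``are legitimate arrow-move consequences by Lemma~\ref{lem-ends}.'' Lemma~\ref{lem-ends} says an ends exchange move is realized by the H-move \emph{together with} arrow moves; the H-move is not an arrow move. By Lemma~\ref{lem-UC} the heads exchange move is, at the level of surgery, exactly the forbidden UC-move, which is not a welded isotopy. Worse, in the present context the heads exchange and head-tail exchange moves cannot be recovered from the $A^{n}$-move either: if they could, then by Lemmas~\ref{lem-UC} and~\ref{lem-A^nV^n} the UC-move would be realized by $V^{n}$-moves and welded Reidemeister moves, contradicting Corollary~\ref{cor-UC} (equivalently, Proposition~\ref{prop-trefoil}) for $n\geq 2$. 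This is precisely why the paper's Figure~\ref{pf-A(n)} argument for the $A(n)$/$\overline{A}(n)$ case is accompanied by the parenthetical appeal to Lemma~\ref{lem-HvsA(n)} --- there the ends exchange moves are licensed because they follow from $A(n)$-moves --- and no analogue of Lemma~\ref{lem-HvsA(n)} exists, or can exist, for $A^{n}$-moves. You correctly flag that the head-tail reversal move is unavailable, but the same objection applies to the ends exchange moves you do allow yourself, so your ``template of Figure~\ref{pf-A(n)}'' does not transfer. The equivalence of $A^{n}$ and $\overline{A}^{n}$ must instead be carried out using only the arrow moves AR1--AR10 (and the allowable moves derived from them), e.g.\ by detouring one strand with virtual kinks and transporting the block of $n$ consecutive, like-oriented w-arrow ends as a whole, so that no two w-arrow ends ever need to be exchanged past one another; as written, your argument has a genuine gap at this point.
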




\end{document}